\documentclass[12pt]{article} 

\setlength{\textwidth}{5.9in} 
\setlength{\textheight}{8in} 
\setlength{\oddsidemargin}{.5cm}
\setlength{\unitlength}{.5mm}
\raggedbottom

\usepackage{amsfonts}
\usepackage{amssymb}
\usepackage{amsmath,amsthm}
\usepackage{latexsym}
\usepackage{amsbsy}
\usepackage{amscd}
\usepackage{color}
\usepackage[all]{xy}
\CompileMatrices

\newtheorem{teorema}{Theorem}[section]
\newtheorem{proposicion}[teorema]{Proposition}
\newtheorem{lema}[teorema]{Lemma}
\newtheorem{corolario}[teorema]{Corollary}
\newtheorem{ejemplo}[teorema]{Example}
\newtheorem{ejemplos}[teorema]{Examples}
\newtheorem{definicion}[teorema]{Definition}
\newtheorem{nota}[teorema]{Remark}

\newtheorem{proyecto}{Project}

\def\bp{B(\P^3,2)}
\def\homeo{\P^3\times\left(\P^3-\{e\}\right)}
\def\fade{\P^3-\{e\}}
\def\cat{\mathrm{cat}}
\def\TC{\mathrm{TC}}
\def\F2{\mathbb{F}_2}
\def\Z2{\Sigma_2}
\def\Sq{\mathrm{Sq}}
\def\gen{\mathfrak{genus}}

\def\map{\mathrm{Map}}
\def\ev{\mathrm{ev}}
\def\emb{\mathrm{Emb}}
\def\e{{\cal E}}
\def\imm{\mathrm{Imm}}
\def\immto{\looparrowright}
\def\P{\mathrm{P}}

\title{Symmetric topological complexity
as the first obstruction in Goodwillie's 
Euclidean embedding tower
for real projective spaces}
\author{Jes\'us Gonz\'alez\footnote{Partially supported  
by CONACYT Research Grant 102783.}}
\date{\empty}

\begin{document}

\maketitle

\begin{abstract}
This paper explains why Goodwillie-Weiss 
calculus of embeddings can offer new information
about the Euclidean embedding dimension of $\P^m$ only for $m\leq15$.
Concrete scenarios are described in these 
low-dimensional cases, pinpointing where to look for potential---but 
critical---high-order obstructions in the corresponding Taylor towers.
For $m\geq16$, the relation $\TC^S(\P^m)\geq n$ is translated 
into the triviality of a certain cohomotopy Euler 
class which, in turn, becomes the only 
Taylor obstruction to producing an embedding $\P^m\subset\mathbb{R}^n$. A
speculative bordism-type form of this primary obstruction is proposed
as an analogue of Davis' $BP$-approach to the immersion problem of 
$\P^m$. A form of the Euler class viewpoint  is applied to show 
$\TC^S(\P^3)=5$, as well as to suggest a few higher dimensional 
projective spaces for which the method could produce new information.
As a second goal, the paper extends
Farber's work on the motion planning problem in order to develop 
the notion of a symmetric motion planner for a mechanical system ${\cal S}$.
Following Farber's lead, this concept is connected to $\TC^S(C({\cal S}))$,
the symmetric topological complexity of the state space 
of ${\cal S}$. The paper ends by sketching the construction
of a concrete $5$-local-rules symmetric motion planner for $\P^3$.
\end{abstract}

\noindent
{\small\it Key words and phrases: topological complexity, 
calculus of embeddings, configuration space.}

\noindent
{\small{\it 2000 Mathematics Subject Classification:} 
57R40, 55M30, 55R80, 70E60.}

\section{Main results}
\label{sectctcsgoodwillie}
Recall the Schwarz genus $\gen(p)$ 
of a fibration $p\colon E\to B$~(\cite{schwarz});
it is one less\footnote{Normalization is chosen so that a trivial
fibration has $\gen=0$.} than
the smallest number of open
sets $U$ covering $B$ in such a way that $p$ 
admits a (continuous) section over each $U$. 

\begin{definicion}[\cite{F1}]\label{TCdef}
The topological complexity of 
a space $X$, $\TC(X)$, is defined as the genus of the end-points evaluation
map $\ev\colon P(X)\to X\times X$, where $P(X)$ is the free path
space $X^{[0,1]}$ with the compact-open topology. 
\end{definicion}

Let $F(X,2)\subset X\times X$ denote the configuration space of ordered 
pairs of distinct points in $X$, 
and $\ev_1\colon P_1(X)\to F(X,2)$ be the restriction of 
the fibration $\ev$. Thus $P_1(X)$ is the subspace of $P(X)$ 
obtained by removing the free loops on $X$. The group $\mathbb{Z}/2$
acts freely on both $P_1(X)$ and $F(X,2)$, by 
running a path backwards in the former, and by 
switching coordinates in the latter. Furthermore, $\ev_1$ is a
$\mathbb{Z}/2$-equivariant map. Let $P_2(X)$
and $B(X,2)$ denote the corresponding orbit spaces,
and let $\ev_2\colon P_2(X)\rightarrow B(X,2)$ denote
the fibration induced by $\ev_1$. 

\begin{definicion}[\cite{FGsymm}]\label{TCSdef}
The symmetric topological complexity of $X$, $\TC^S(X)$, is defined by
$\TC^S(X)=\gen(\ev_2)+1$.
\end{definicion}

These constructions arise in connection with the motion 
planning problem in robotics. The reader interested in the
motivation and further calculations should consult~\cite{F1,FGsymm} and
references therein.

\smallskip
The concept of a symmetric motion planner (as a collection of
suitably chosen continuous $\mathbb{Z}/2$-equivariant local 
sections---the local rules---for $\ev_1$) for a space, e.g.~the
state space of an autonomous robot, is formalized in Section~\ref{smp} 
in order to prove:

\begin{teorema}\label{tcsvsslr}
If $X$ is a smooth manifold, then $\TC^S(X)$ is the smallest 
possible number of local rules 
of symmetric motion planners for $X$.
\end{teorema}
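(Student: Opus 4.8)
The plan is to mirror Farber's proof that $\TC(X)$ is the minimal number of local rules of an ordinary motion planner, transporting the argument across the double cover $\pi\colon F(X,2)\to B(X,2)$ and its total-space analogue $P_1(X)\to P_2(X)$. The first thing I would record is that the commutative square with vertical maps $\ev_1,\ev_2$ and horizontal maps the two orbit projections is a pullback: a point of $F(X,2)\times_{B(X,2)}P_2(X)$ lying over $(c,[\gamma])$ determines, and is determined by, the unique lift of $[\gamma]$ to a path whose ordered pair of endpoints is $c$, and this identification intertwines the $\mathbb{Z}/2$-action by coordinate swap on $F(X,2)$ with the action by path reversal on $P_1(X)$. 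Consequently, restriction and pullback along $\pi$ set up a bijection between $\mathbb{Z}/2$-equivariant local sections of $\ev_1$ over $\mathbb{Z}/2$-invariant subsets of $F(X,2)$ and local sections of $\ev_2$ over the corresponding subsets of $B(X,2)$: an equivariant section over an invariant $A$ descends to a section over $A/(\mathbb{Z}/2)$, while a section $\sigma$ over $U\subseteq B(X,2)$ pulls back to the equivariant section $c\mapsto(c,\sigma(\pi(c)))$ over $\pi^{-1}(U)$.

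Next I would bring in the point-set input supplied by the hypothesis that $X$ is a smooth manifold. Being a manifold, $X$ is an ENR, hence so are $X\times X$, its open subset $F(X,2)$, and---since $\mathbb{Z}/2$ acts freely and smoothly---the manifold $B(X,2)=F(X,2)/(\mathbb{Z}/2)$; moreover every locally closed subset of these manifolds is again an ENR. With the vocabulary of Section~\ref{smp} in hand, a symmetric motion planner with $k$ local rules is a partition $F(X,2)=F_1\sqcup\cdots\sqcup F_k$ into $\mathbb{Z}/2$-invariant ENRs together with equivariant sections $s_i\colon F_i\to P_1(X)$ of $\ev_1$. To prove $\TC^S(X)\le k$ from such a planner I would push everything down: the sets $F_i/(\mathbb{Z}/2)$ partition $B(X,2)$ into ENRs carrying the descended sections of $\ev_2$, and then I would invoke Farber's thickening lemma~\cite{F1}---a finite ENR-partition of the base of a fibration with sections over each piece can be enlarged to an open cover of the same cardinality, still with sections---to conclude $\gen(\ev_2)\le k-1$, i.e.\ $\TC^S(X)=\gen(\ev_2)+1\le k$. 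For the reverse inequality, starting from an open cover $B(X,2)=U_1\cup\cdots\cup U_n$ with $n=\TC^S(X)$ and sections $\sigma_i$ of $\ev_2$, I would pass to the partition $G_i:=U_i\setminus(U_1\cup\cdots\cup U_{i-1})$ of $B(X,2)$ into locally closed, hence ENR, pieces and then pull back along $\pi$ using the first paragraph, obtaining $\mathbb{Z}/2$-invariant ENR pieces that partition $F(X,2)$ and carry equivariant sections of $\ev_1$: a symmetric motion planner with exactly $n=\TC^S(X)$ local rules. The two inequalities together give the theorem.

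The step I expect to be the genuine obstacle is the equivariant incarnation of the thickening lemma. Attempted directly on $F(X,2)$, one must produce a $\mathbb{Z}/2$-invariant open thickening of each invariant ENR piece together with an equivariant extension of the corresponding section, which forces one to carry out the ENR retractions and the subsequent shrinking compatibly with the involution. The route I would take instead is the one indicated above: the problem loses all equivariance once pushed down to $B(X,2)$, so one applies the classical lemma there and pulls the resulting open cover---automatically invariant on $F(X,2)$---back along $\pi$, checking only, via the pullback square, that the pulled-back sections stay continuous and equivariant. Everything else is the routine bookkeeping of transporting covers, partitions, and sections through a free double cover.
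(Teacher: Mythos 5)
Your overall architecture—splitting the theorem into a lower bound ($k\ge\TC^S(X)$ for any planner with $k$ rules) and an existence statement (some planner has exactly $\TC^S(X)$ rules), and moving back and forth along $\pi\colon F(X,2)\to B(X,2)$ via the pullback square identifying $P_1(X)$ with $F(X,2)\times_{B(X,2)}P_2(X)$—matches the paper's Propositions~\ref{lema91} and~\ref{elotro}, and your lower-bound argument is essentially in the same spirit as the paper's. However, the existence direction has a genuine gap, rooted in a misreading of Definition~\ref{smpdef}.

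You paraphrase a symmetric motion planner as a partition of $F(X,2)$ into $\mathbb{Z}/2$-invariant \emph{ENRs} with equivariant sections, but condition~(b) of the actual Definition~\ref{smpdef} is strictly stronger: each local domain $F_i$ must be a $\mathbb{Z}/2$-equivariant neighborhood deformation retract of $F(X,2)$, meaning there is a $\mathbb{Z}/2$-invariant open $U_i\supset F_i$ with a $\mathbb{Z}/2$-equivariant retraction $r_i\colon U_i\to F_i$ that is, moreover, $\mathbb{Z}/2$-equivariantly deformable to the inclusion $U_i\hookrightarrow F(X,2)$. The paper remarks, immediately after the definition, that this is deliberately more restrictive than Farber's non-equivariant ENR hypothesis. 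Because you work with the weaker notion, the pieces you construct in the existence step—the pullbacks $F_i=\pi^{-1}(G_i)$ of the locally closed sets $G_i=U_i\setminus(U_1\cup\cdots\cup U_{i-1})$—are declared to be a symmetric motion planner without ever verifying condition~(b). That verification is the nontrivial content of Proposition~\ref{elotro}: the paper replaces the crude difference sets $G_i$ by submanifolds-with-boundary produced via a smooth partition of unity and Sard's theorem, takes tubular neighborhoods $N_i$ (so one has clean strong deformation retractions $N_i\to G_i$), and then lifts these homotopies through the double cover $\pi$ using the homotopy lifting property, checking equivariance of the lifted deformation by unique path lifting. This is also exactly where the smoothness hypothesis enters; your proposal only uses ``manifold $\Rightarrow$ ENR,'' so it cannot be reproducing this step. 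Without it, you have at best shown that a weaker, ENR-flavored notion of symmetric motion planner attains $\TC^S(X)$ local rules, not the notion the theorem is actually about.
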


The following result is derived in Section~\ref{secexa} 
by applying the ideas in Section~\ref{secprojuno}.

\begin{teorema}\label{vendimia}
$\TC^S(\mathrm{SO}(3))=5$. Consequently, $5$ is the smallest possible 
number of local rules in any symmetric motion planner of an 
autonomous robot whose space of states is described by all 
three-dimensional rotations.
\end{teorema}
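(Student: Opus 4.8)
The plan is to recognize $\mathrm{SO}(3)$ as $\P^3$ and then let Theorem~\ref{tcsvsslr} translate the computation into local rules. Conjugation of the unit quaternions on $\mathrm{Im}(\mathbb{H})=\mathbb{R}^3$ is a smooth double cover $S^3\to\mathrm{SO}(3)$ with kernel $\{\pm1\}$, so $\mathrm{SO}(3)$ is diffeomorphic to $\P^3$. Any such diffeomorphism induces $\mathbb{Z}/2$-equivariant homeomorphisms $F(\mathrm{SO}(3),2)\cong F(\P^3,2)$ and $P_1(\mathrm{SO}(3))\cong P_1(\P^3)$ compatible with the fibrations $\ev_1$, hence homeomorphisms of the orbit spaces $B(\mathrm{SO}(3),2)\cong\bp$ and $P_2(\mathrm{SO}(3))\cong P_2(\P^3)$ compatible with $\ev_2$; so $\gen(\ev_2)$, and with it $\TC^S$, agrees on the two manifolds (only invariance under homeomorphism is used, not the delicate question of homotopy invariance of $\TC^S$). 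Granting $\TC^S(\P^3)=5$, the first assertion follows, and since $\mathrm{SO}(3)$ is a smooth manifold the closing sentence is then immediate from Theorem~\ref{tcsvsslr}. The real task is therefore $\TC^S(\P^3)=5$, which I would establish through separate upper and lower bounds.

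For the upper bound $\TC^S(\P^3)\le5$ I would exhibit a symmetric motion planner for $\P^3$ with five local rules and invoke Theorem~\ref{tcsvsslr}; this is the explicit construction sketched later in the paper. One natural scheme begins with a smooth embedding $\P^3\subset\mathbb{R}^5$ and stratifies $\bp$ into five $\mathbb{Z}/2$-invariant pieces, on each of which one chooses continuously, for every unordered pair $\{x,y\}$ of distinct points of $\P^3$, a reversal-invariant arc joining $x$ to $y$: take the straight segment in $\mathbb{R}^5$, push it back onto $\P^3$, and use the first-crossing data together with the local geometry near $x$ and $y$ to organize the five-fold decomposition.

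The heart of the proof is the lower bound $\TC^S(\P^3)\ge5$, i.e.\ $\gen(\ev_2)\ge4$, and here the ideas of Section~\ref{secprojuno} are essential. The elementary estimates fall short: $\TC^S(\P^3)\ge\TC(\P^3)=3$, and the zero-divisor cup-length bounds available in $H^*(\bp;\F2)$ do not reach $5$ either. Instead I would reformulate $\gen(\ev_2)\ge4$ as the vanishing of a cohomotopy Euler class $\varepsilon$ over $\bp$ --- the symmetric counterpart of the tautological situation over $B(\mathbb{R}^5,2)\simeq\P^4$, where $\ev_2$ is a homotopy equivalence --- and then show $\varepsilon=0$. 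The geometric input forcing $\varepsilon=0$ is the embedding theory of $\P^3$ in $\mathbb{R}^5$: the $\mathbb{Z}/2$-equivariant ``difference'' map $F(\P^3,2)\to S^4$ attached to an embedding $\P^3\subset\mathbb{R}^5$ should descend to the compression that witnesses the triviality of $\varepsilon$. The companion fact that $\P^3$ does \emph{not} embed in $\mathbb{R}^4$ --- its linking form on $H_1(\P^3;\mathbb{Z})=\mathbb{Z}/2$ is nonsplit, against Hantzsche's constraint for $3$-manifolds in $\mathbb{R}^4$ --- is what makes $5$ sharp and shows no cheaper planner can exist.

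The main obstacle is precisely this lower bound: defining the cohomotopy Euler class $\varepsilon$, proving that $\gen(\ev_2)\ge4$ is genuinely equivalent to $\varepsilon=0$, and checking that the equivalence is valid in the relevant degree. All of this rests on the compression/obstruction-theory dictionary set up in Section~\ref{secprojuno}, and one must be wary that $m=3$ lies well below the metastable range, so the passage among the equivariant cohomotopy class, sections of $\ev_2$, and embeddings of $\P^3$ into small Euclidean spaces is not formal and must be argued by hand. By comparison the remaining ingredients --- the diffeomorphism $\mathrm{SO}(3)\cong\P^3$, the homeomorphism-invariance of $\gen(\ev_2)$, the five-rule planner, and the appeal to Theorem~\ref{tcsvsslr} --- are routine.
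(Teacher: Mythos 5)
Your preliminary steps are correct: the diffeomorphism $\mathrm{SO}(3)\cong\P^3$, the homeomorphism-invariance of $\gen(\ev_2)$, and the appeal to Theorem~\ref{tcsvsslr} for the closing sentence are all sound. The upper bound $\TC^S(\P^3)\le5$ is also fine (though the paper gets it immediately from Remark~\ref{obstr1} and $\emb(\P^3)=5$, without needing an explicit planner). The genuine gap is the lower bound, and it is not a detail: your argument for $\TC^S(\P^3)\ge5$ is both logically inverted and circular. You propose to reformulate $\gen(\ev_2)\ge4$ as the \emph{vanishing} of a cohomotopy Euler class $\varepsilon$ and then to show $\varepsilon=0$ using the embedding $\P^3\subset\mathbb{R}^5$; but vanishing of the Euler class of $n\zeta_3$ is precisely what yields a compression $\bp\to\P^{n-1}$ and hence the \emph{upper} bound $\TC^S(\P^3)\le n$. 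To get a lower bound you must show an obstruction class is \emph{nonzero}. Worse, the closing appeal to the non-embeddability of $\P^3$ in $\mathbb{R}^4$ as ``what makes $5$ sharp'' presupposes the equality $\TC^S(\P^3)=\emb(\P^3)$ that the theorem exists to establish: for $m=3$ the metastable condition~(\ref{tcsenelmeta}) fails, so the implication ``no embedding $\P^3\subset\mathbb{R}^4$ $\Rightarrow$ no $\mathbb{Z}/2$-equivariant map $F(\P^3,2)\to S^3$'' is exactly what might break, and whether it breaks is the whole content of question~(\ref{question1}).

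The paper's actual route is the integral Hopf-type approach of Project~\ref{problema1}. By Theorem~\ref{mainsymmotion}, $\TC^S(\P^3)\ge5$ amounts to the non-existence of a compression $\bp\to\P^3$ of the classifying map $\widehat{\zeta_3}\colon\bp\to\P^\infty$, hence (since $\omega^2\in H^4(\P^\infty;\mathbb{Z})$ restricts to zero on $\P^3$) to $\widehat{\zeta_3}^{\,*}(\omega^2)\ne0$. This is Theorem~\ref{notrivialidad}, proved by computing $H^*(\bp;\mathbb{Z})$ via the Cartan--Leray--Serre spectral sequence of the double cover $F(\P^3,2)\to\bp$, using the homeomorphism $F(\P^3,2)\cong\P^3\times(\P^3-\{e\})$ coming from the group structure, and using a $\Sq^1$-cohomology calculation (from the mod $2$ ring of Section~\ref{redsymmcalc}) to resolve the crucial extension $H^4(\bp;\mathbb{Z})=\mathbb{Z}/4$, with $\widehat{\zeta_3}^{\,*}(\omega^2)$ the order-$2$ element. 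That element is $2$-divisible, which is exactly why the $\F2$-bounds you correctly note stop at $4$: the obstruction is integral, and the missing ingredient in your sketch is this integral spectral-sequence computation, not a cohomotopy Euler class.
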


Section~\ref{smp} describes an explicit $5$-local-rules motion planner
for an autonomous robot whose space of states is $\mathrm{SO}(3)$.
The main ingredient in the proof of Theorem~\ref{vendimia}
comes from the following calculation, the main goal in Section~\ref{secexa}:

\begin{teorema}\label{lacohofinal} The integral cohomology of 
$B(\mathrm{SO}(3),2)$ is 
$$H^*(B(\mathrm{SO}(3),2))
=\begin{cases}\mathbb{Z},&*=0;\\0,&*=1\mbox{ \ or \ }*\geq6;\\
\mathbb{Z}/2\oplus\mathbb{Z}/2,&*=2;\\\mathbb{Z}\oplus\mathbb{Z}/2,&*=3;\\
\mathbb{Z}/4,&*=4;\\\mathbb{Z}/2,&*=5.
\end{cases}$$
Furthermore, the classifying map of the double cover
$F(\mathrm{SO}(3),2)\to B(\mathrm{SO}(3),2)$ pulls back the square of the 
generator in $H^2(\P^\infty)$ to the element of order $2$ in 
$H^4(B(\mathrm{SO}(3),2))$.
\end{teorema}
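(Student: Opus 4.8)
\emph{Plan.} Since $\mathrm{SO}(3)$ is a topological group, the map $(x,y)\mapsto(x,x^{-1}y)$ is a homeomorphism $F(\mathrm{SO}(3),2)\xrightarrow{\,\cong\,}\mathrm{SO}(3)\times(\mathrm{SO}(3)\setminus\{e\})$ under which the coordinate switch becomes the involution $(a,b)\mapsto(ab,b^{-1})$. This action is free, so $B(\mathrm{SO}(3),2)$ is the orbit space of a free $\Sigma_2$-action, hence a homotopy orbit space, and $H^*(B(\mathrm{SO}(3),2);R)\cong H^*_{\Sigma_2}\!\big(\mathrm{SO}(3)\times(\mathrm{SO}(3)\setminus\{e\});R\big)$ can be computed with the Cartan--Leray spectral sequence $E_2^{p,q}=H^p\!\big(\Sigma_2;H^q(\mathrm{SO}(3)\times(\mathrm{SO}(3)\setminus\{e\});R)\big)\Rightarrow H^{p+q}(B(\mathrm{SO}(3),2);R)$, the coefficients being twisted through the involution. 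The homotopy type of the input is elementary: $\mathrm{SO}(3)\setminus\{e\}$ deformation retracts onto a hyperplane $\P^2$, so the total space has the cohomology of $\P^3\times\P^2$.

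The first task is to identify the $\Sigma_2$-action on this cohomology. With $p_1,p_2$ the two projections and $a\in H^1(\mathrm{SO}(3);\F2)$, $b\in H^1(\mathrm{SO}(3)\setminus\{e\};\F2)$ the generators, the $H$-space coproduct forces the multiplication to satisfy $\mu^*a=a\otimes1+1\otimes a$, while inversion acts as the identity on $H^*(\mathrm{SO}(3);\F2)$ and on $H^*(\P^2;\F2)$ (any automorphism of $\F2$ is trivial). Hence, writing $\alpha=p_1^*a$ and $\beta=p_2^*b$ so that $H^*(\mathrm{SO}(3)\times(\mathrm{SO}(3)\setminus\{e\});\F2)=\F2[\alpha,\beta]/(\alpha^4,\beta^3)$, the involution is the algebra automorphism $\alpha\mapsto\alpha+\beta$, $\beta\mapsto\beta$. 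Decomposing each graded piece into trivial and free $\F2[\Sigma_2]$-summands, and recording the analogous module structures over $\mathbb{Z}$ (the rational case being trivial, since there $\sigma$ acts as the identity), one writes down the three $E_2$-pages. Rationally the sequence collapses and yields $H^*(B(\mathrm{SO}(3),2);\mathbb{Q})=\mathbb{Q}$ concentrated in degrees $0$ and $3$; and over $\mathbb{Z}$ the $H^1$-coefficient row vanishes identically while the $H^2$-coefficient row is a free $\F2[\Sigma_2]$-module, hence (Shapiro's lemma) has vanishing group cohomology in positive degrees --- two observations that already annihilate most candidate differentials.

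What remains --- and what I expect to be the main obstacle --- is pinning down the surviving differentials together with the extension problems, sharpest among them the claim that $H^4(B(\mathrm{SO}(3),2);\mathbb{Z})=\mathbb{Z}/4$ rather than $\mathbb{Z}/2\oplus\mathbb{Z}/2$. The levers available are multiplicativity of the spectral sequence, the rational answer just obtained, and Poincar\'e--Lefschetz duality for the open $6$-manifold $B(\mathrm{SO}(3),2)$. Concretely I would first determine $H^*(B(\mathrm{SO}(3),2);\F2)$ as a ring with its $\Sq^1$-action from the mod-$2$ sequence, and then recover the integral groups from the associated Bockstein spectral sequence. In degree $4$ the only two potentially nonzero $E_\infty$-terms are $E_\infty^{4,0}$ (a quotient of $H^4(\Sigma_2;\mathbb{Z})=\mathbb{Z}/2$) and $E_\infty^{0,4}$ (a subquotient of $H^4(\P^3\times\P^2;\mathbb{Z})^{\Sigma_2}=\mathbb{Z}/2$); one must check that both survive and that the extension between them is non-split --- the latter is exactly a second-order Bockstein joining a degree-$3$ class to a degree-$4$ class, which is where the $\F2$-ring structure earns its keep.

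Finally, the Euler-class statement is almost formal given the above. Let $c\colon B(\mathrm{SO}(3),2)\to\P^\infty$ classify the double cover and let $\delta$ be the integral Bockstein; the generator $u\in H^2(\P^\infty;\mathbb{Z})$ is $\delta$ of the generator $t\in H^1(\P^\infty;\F2)$, so $c^*u=\delta(c^*t)$ and $c^*(u^2)=(c^*u)^2=\big(\delta(c^*t)\big)^2$. This class is $2$-torsion, since $\delta(c^*t)$ is and $2\xi=0$ implies $2\xi^2=(2\xi)\xi=0$; therefore $c^*(u^2)$ lies in the subgroup of order $2$ of $H^4(B(\mathrm{SO}(3),2);\mathbb{Z})\cong\mathbb{Z}/4$, i.e.\ it is either $0$ or the unique element of order $2$. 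That it is nonzero follows from the spectral sequence: $c^*$ in degree $4$ is the edge homomorphism $H^4(\P^\infty;\mathbb{Z})\twoheadrightarrow E_\infty^{4,0}\hookrightarrow H^4(B(\mathrm{SO}(3),2);\mathbb{Z})$, and since $E_\infty^{4,0}=\mathbb{Z}/2$ survives, $c^*(u^2)$ generates $E_\infty^{4,0}=F^4H^4$, which inside $\mathbb{Z}/4$ is precisely the $2$-torsion. Since the very survival of $E_\infty^{4,0}$ is also what pushes $H^4$ past $\mathbb{Z}/2$, the cohomology computation and the Euler-class assertion come out of one and the same analysis.
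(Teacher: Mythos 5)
Your overall strategy matches the paper's: the same homeomorphism $F(\mathrm{SO}(3),2)\cong\mathrm{SO}(3)\times(\mathrm{SO}(3)-\{e\})$, the same involution $(a,b)\mapsto(ab,b^{-1})$ and its action on cohomology (including the subtle term sending $y_1$ to $y_1+z$), and the same Cartan--Leray--Serre spectral sequence for the $\Sigma_2$-cover. Your identification of the $\mathbb{Z}/4$ in degree~$4$ as a ``higher Bockstein joining degree $3$ to degree $4$'' is exactly what drives the paper's argument, and your edge-homomorphism derivation of the Euler-class statement is also the paper's argument.

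That said, there are two places where you have either a gap or a weaker tool than what is actually needed.

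First, you invoke ``Poincar\'e--Lefschetz duality for the open $6$-manifold $B(\mathrm{SO}(3),2)$'' as one of your levers. This is not the right statement and, as phrased, does not do the job. The spectral sequence has $q\leq 5$ but arbitrarily large $p$, so in total degree $\geq 6$ there are infinitely many potentially surviving classes (along the $q=0$ and $q=3$ lines). Lefschetz duality for an open $6$-manifold relates ordinary cohomology to Borel--Moore homology (or compactly supported cohomology), which gives no control here without additional input. What is actually used --- repeatedly, to kill the element at node $(2,4)$, to rule out the odd $d_2$'s, and to force the $d_5$ family --- is Handel's result that $B(\P^m,2)$ has the homotopy type of a \emph{closed} $(2m-1)$-manifold, so that $H^*(B(\P^3,2))=0$ for $*\geq 6$. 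You need to cite or reprove that finiteness statement; duality for the open $6$-manifold is not a substitute.

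Second, you propose to extract $H^*(B(\mathrm{SO}(3),2);\F2)$ with its $\Sq^1$-action from the mod-$2$ Cartan--Leray sequence before running the Bockstein. The paper does \emph{not} obtain this from the mod-$2$ Cartan--Leray sequence --- it computes the mod-$2$ ring (and its $\Sq^1$-cohomology) independently via Haefliger's method in the preceding section, and then feeds this into the integral sequence as external input to resolve the extension in degree~$4$. This matters because the mod-$2$ Cartan--Leray sequence has a large $E_2$-page with nontrivial differentials (the Steenrod collapse result applies to the Borel construction on $M^2$, not to $F(M,2)$), and determining those differentials is itself a nontrivial problem. So as written your step~5 is a placeholder rather than an argument; you would need either to supply those differentials or, as the paper does, to compute the mod-$2$ cohomology by an independent method (Haefliger's exact sequence), which is where Formula~(\ref{mo2cohoB}) and the $\Sq^1$-cohomology calculation in Examples~\ref{ejemp2} come from.

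Once those two points are repaired, the rest of your plan --- rational collapse, the vanishing of the $q=1$ line, the freeness of the $q=2$ coefficient module, the non-split extension in degree $4$ detected by a higher Bockstein, and the edge-homomorphism identification of $c^*(u^2)$ --- agrees with the paper's proof.
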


\begin{nota}\label{Fred}{\em
Note that $H^*(B(\mathrm{SO}(3),2))$ has both $2$-torsion and $4$-torsion, 
but no $8$-torsion. Fred Cohen has indicated to the author
(even before the latter completed the proof of Theorem~\ref{lacohofinal})
that this seems to be the situation for all $B(\P^m,2)$ with $m\geq 2$.
Here and in what follows $\P^m$ stands for the $m$-dimensional
real projective space.
}\end{nota}

Some of the properties in the 
multiplicative structure of the cohomology ring of 
$B(\mathrm{SO}(3),2)$ are discussed in Remark~\ref{estrult}.

\medskip
Theorem~\ref{vendimia} should be compared
to the known equality $\TC(\mathrm{SO}(3))=3$.
Actually, $\TC(G)=\mathrm{cat}(G)$ for any connected topological group 
$G$~(\cite{farberinstabilities}). On the other hand, it is known
that $\TC^S(\mathrm{SO}(2))=2$ (see Table~\ref{tabla} in 
Section~\ref{redsymmprod}), while $\TC(\mathrm{SO}(2))=1$.
An interesting open question arises, then, in regard to the deviation of
$\TC^S$ from $\TC$ for a general connected topological (or even Lie) 
group---it should be noted that the inequality $\TC^S\geq\TC$ is 
known from~\cite{FGsymm}.

\smallskip
Remark~\ref{suerte} in Section~\ref{redsymmcalc} provides some 
indirect evidence toward the possibility that the 
method of proof of Theorem~\ref{vendimia} could actually be used 
in order to compute $\TC^S(\P^m)$ for (at least) $m=5,6$. 
The calculations in Section~\ref{secexa} (see in particular Remark~\ref{razon})
seem to suggest that the potential phenomenon mentioned in Remark~\ref{Fred}
would play a critical role in the expected derivation of
new information about $\TC^S(\P^m)$. In fact, the author hopes that this
paper motivates a renewed interest in understanding the homotopy
properties of the unordered configuration spaces $B(\P^m,2)$, 
which are key objects in the Euclidean embedding problem for real 
projective spaces.

\section{$\TC$, $\TC^S$, and calculus of functors of embeddings}
\label{calculus} Despite their robotics origin,
the topological complexity ideas are closely related, in the case of $\P^m$, 
to the calculus of embeddings developed in~\cite{GW,W}
by Goodwillie and Weiss. This is the main 
motivation for the present paper. The explicit connection is explained 
right after taking a quick glance at Goodwillie-Weiss' framework.

\smallskip
For a smooth $m$-dimensional manifold $M$ let 
$\e_n^M=\emb(-,\mathbb{R}^n)$ denote the functor 
of smooth embeddings of open sets of $M$ into $\mathbb{R}^n$. 
The Taylor expansion of $\e_n^M$ is a tower 
of functors 
\begin{equation}\label{tower}
\cdots\to{\cal T}_k\e_n^M
\stackrel{r_k}{\to}\cdots\stackrel{r_3}{\to}{\cal T}_2\e_n^M\stackrel{r_2}{\to}
{\cal T}_1\e_n^M
\end{equation}
equipped with 
compatible mappings $\eta_k\colon \e_n^M\to{\cal T}_k\e_n^M$
which are to be thought as giving better and better (as $k$ increases)
approximations of $\e_n^M$. For instance, the standard model for 
the linear approximation ${\cal T}_1\e_n^M$ is the sheaf 
of smooth immersions into $\mathbb{R}^n$. In this case
$\eta_1$ is the obvious inclusion, an equivalence through
dimensions $n-2m+1$ (Whitney's stable range). Likewise, for
a manifold $M$ with 
\begin{equation}\label{metarange}
2n\geq 3(m+1)
\end{equation}
(recall $\dim(M)=m$), Haefliger's work shows that a model 
for the quadratic approximation ${\cal T}_2\e_n^M(M)$ is given by
the space $\map^{\mathbb{Z}/2}(F(M,2), S^{n-1})$ of (continuous) 
$\mathbb{Z}/2$-equivariant maps $F(M,2)\to S^{n-1}$
(with antipodal $\mathbb{Z}/2$-action on the sphere $S^{n-1}$). 
This time $\eta_2(f)$ sends a pair $(x,y)\in F(M,2)$ into the normalized
difference
\begin{equation}\label{haefligermap}
\frac{f(x)-f(y)}{||f(x)-f(y)||}\in S^{n-1},
\end{equation}
determining  
an equivalence $\eta_2$ through dimensions $2n-3m-3$.
More generally, in a certain range the tower~(\ref{tower}) is an analytic
approximation for $\e_n^M$ since, according to~(\cite{GKW,GW}), for 
$n>m+2$
\begin{equation}\label{equivalencerange}
\mbox{$\eta_k$ is an equivalence through dimensions }\,kn-(k+1)m-2k+1.
\end{equation} 

Throughout the rest of this section
$M$ is replaced by the $m$-dimensional real projective space $\P^m$.
The first point to explain is how the work in~\cite{FTY,symmotion} 
(which are the basic references for the following assertions)
allows us to think of $\TC$ and $\TC^S$ as giving
detecting indicators for the linear and quadratic
terms in the Taylor tower for $\e_n^{\P^m}$. To begin with, if
${\cal T}_1\e_n^{\P^m}(\P^m)$ is non-empty, then
\begin{equation}\label{FTY}\TC(\P^m)\leq n.\end{equation}
Likewise, if ${\cal T}_2\e_n^{\P^m}(\P^m)$ is 
non-empty, then
\begin{equation}\label{GL}\TC^S(\P^m)\leq n.\end{equation}
Furthermore, 
not only do both implications hold without range restrictions but, 
except for a very limited number of cases, they are sharp.
For instance, the implication involving~(\ref{FTY}) is reversible
for a non-parallelizable $\P^m$, and in
the three exceptional cases of parallelizability ($m=1,3,7$) one has
$${\cal T}_1\e_n^{\P^m}(\P^m)\neq\varnothing\;\;\Leftrightarrow\;\;
\TC(\P^m)\leq n-1.$$

The discussion of the
situation for the implication involving~(\ref{GL})---the main topic
in~\cite{symmotion}---is perhaps more interesting. 
Namely, such an implication is reversible, for instance,
when $\TC^S(\P^m)$ is within Haefliger's metastable range~(\ref{metarange}), 
that is, when 
\begin{equation}\label{tcsenelmeta}
2\,\TC^S(\P^m)\geq 3(m+1).
\end{equation}
In fact, Haefliger's work gives\footnote{Brian Munson's help, 
starting at the 2009 CBMS conference in Cleveland, 
has been fundamental for realizing the correct form of this fact.} 
$$
\emb(\P^m,\mathbb{R}^n)\neq\varnothing\;\;\Leftrightarrow\;\;
{\cal T}_2\e_n^{\P^m}(\P^m)\neq\varnothing
\;\;\Leftrightarrow\;\;\TC^S(\P^m)\leq n
$$
whenever~(\ref{tcsenelmeta}) holds, so that
$\TC^S(\P^m)$ agrees with $\emb(\P^m)$, the 
embedding dimension of $\P^m$. 
In other words, within the range determined by~(\ref{tcsenelmeta}), 
if a given immersion 
$f\colon\P^m\immto\mathbb{R}^n$ can be regularly deformed into one coming from
an element in ${\cal T}_2\e_n^{\P^m}(\P^m)$, then the immersion can be 
regularly deformed into an embedding. An alternative
phrasing (which ignores indeterminacy issues) of this fact goes as follows: 
When~(\ref{tcsenelmeta}) holds, there is only one 
obstruction to lifting elements in 
${\cal T}_1\e_n^{\P^m}(\P^m)$ through the several maps $r_k$ in~(\ref{tower});
that obstruction holds at $k=2$, has to do with avoiding double points,
and is numerically detected by the condition $\TC^S(\P^m)\leq n$.

\smallskip
It is known from~\cite{symmotion} that Haefliger's metastable range 
condition~(\ref{tcsenelmeta}) holds for $m\ge16$, 
as well as for $m=4,8,9,10,13$
(for the case $m=10$, see Table~\ref{tabla} in Section~\ref{redsymmprod}, 
and the comments therein). 
But the equality $\TC^S(\P^m)=\emb(\P^m)$
is also known for $m\le2$
and, from Theorem~\ref{vendimia}, for $m=3$.
However, it is currently unknown if in any of the remaining cases,
\begin{equation}\label{exceptions}m\in\{5,6,7,11,12,14,15\},\end{equation}
\begin{itemize}
\item[(a)] the implication involving~(\ref{GL}) is reversible,
\item[(b)] there are higher obstructions to lifting through the
corresponding tower~(\ref{tower}), or if, on the contrary, 
\item[(c)] $\TC^S(\P^m)$ agrees with the embedding dimension of $\P^m$. 
\end{itemize}

The major goal in this paper 
is to suggest ways to shed light on these low dimensional cases---mainly 
through Projects~\ref{problema1} and~\ref{problema3} below, although 
Project~\ref{problema2} has interesting potential implications for higher
dimensional projective spaces (see item~II in Section~\ref{lasecciontres}).

\smallskip
The section closes by recording the result of peeling Haefliger's 
numerical restriction~(\ref{tcsenelmeta}) off the relation between
${\cal T}_2\e_n^{\P^m}(\P^m)$ and $\TC^S(\P^m)$. Explicit computations
in the case $m=3$ can be found
at the end of Section~\ref{smp}.

\begin{teorema}[\cite{symmotion}]\label{mainsymmotion}
$\map^{\mathbb{Z}/2}(F(\P^m,2), S^{n-1})\neq\varnothing$ if and only if 
$\:\TC^S(\P^m)\leq n\,$.
\end{teorema}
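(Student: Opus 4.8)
The plan is to reduce the assertion, in three steps, to the classical description of the sectional category of a two‑fold covering. First I would unwind Definition~\ref{TCSdef}: the inequality $\TC^S(\P^m)\le n$ says precisely that $B(\P^m,2)$ admits an open cover by $n$ sets over each of which $\ev_2$ has a section. Since $\mathbb{Z}/2$ acts freely on $F(\P^m,2)$ and $\ev_1$ is a $\mathbb{Z}/2$‑equivariant fibration whose orbit map is $\ev_2$, this is equivalent to covering $F(\P^m,2)$ by $n$ open invariant sets over each of which $\ev_1$ admits a $\mathbb{Z}/2$‑equivariant section (an equivariant section over an invariant $V$ descends to a section of $\ev_2$ over $V/(\mathbb{Z}/2)$, and conversely one uses that $P_1(\P^m)$ is the pullback of $F(\P^m,2)$ along $\ev_2$). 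In short, $\TC^S(\P^m)-1$ equals the equivariant Schwarz genus of $\ev_1$.

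The second step -- the crux of the argument -- trades the infinite‑dimensional fibration $\ev_1$ for an honest double cover without altering this equivariant genus. Lifting a path $\gamma$ of $\P^m$ with distinct endpoints along $S^m\to\P^m$ records, up to a simultaneous change of sign, the pair of endpoints of the lift; this is a $\mathbb{Z}/2$‑equivariant fibration $\pi$ from $P_1(\P^m)$ onto the double cover $\widehat E$ of $F(\P^m,2)$ obtained from $\{(\tilde x,\tilde y)\in S^m\times S^m:\tilde x\ne\pm\tilde y\}$ by identifying $(\tilde x,\tilde y)$ with $(-\tilde x,-\tilde y)$. Crucially $\pi$ has a global $\mathbb{Z}/2$‑equivariant section, namely $(\tilde x,\tilde y)\mapsto{}$(the image in $\P^m$ of) the minor great‑circle arc from $\tilde x$ to $\tilde y$; hence a subset of $F(\P^m,2)$ carries an equivariant section of $\ev_1$ exactly when it carries one of $\widehat E\to F(\P^m,2)$, and passing to $\mathbb{Z}/2$‑orbits I would obtain $\TC^S(\P^m)-1=\gen(\rho)$, where $\rho\colon\widehat E/(\mathbb{Z}/2)\to B(\P^m,2)$ is a double cover.

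The last step is to identify $\rho$ with the canonical double cover $q\colon F(\P^m,2)\to B(\P^m,2)$ up to homotopy. I would first deformation‑retract $F(\P^m,2)$, together with all the covers above it, $\mathbb{Z}/2$‑equivariantly onto the corresponding quotient of the Stiefel manifold $V_2(\mathbb{R}^{m+1})$ of orthonormal $2$‑frames, on which the dihedral group $D_8=(\mathbb{Z}/2)^2\rtimes\mathbb{Z}/2$ acts by the two sign changes and the transposition. The orthogonal transformation $(\tilde x,\tilde y)\mapsto\tfrac{1}{\sqrt{2}}(\tilde x+\tilde y,\tilde x-\tilde y)$ of $\mathbb{R}^{m+1}\oplus\mathbb{R}^{m+1}$ preserves $V_2(\mathbb{R}^{m+1})$ and realizes the outer automorphism of $D_8$ interchanging its two Klein four‑subgroups; it therefore carries $\rho$ isomorphically to $q$ over a self‑homeomorphism of $B(\P^m,2)$, so $\gen(\rho)=\gen(q)$. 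Finally, Schwarz's theory~\cite{schwarz} identifies $\gen(q)\le n-1$ with the property that the classifying map $B(\P^m,2)\to\P^\infty$ of $q$ factors, up to homotopy, through $\P^{n-1}$, equivalently with the existence of a $\mathbb{Z}/2$‑equivariant map $F(\P^m,2)\to S^{n-1}$; combining the displayed equalities gives $\TC^S(\P^m)\le n$ if and only if $\map^{\mathbb{Z}/2}(F(\P^m,2),S^{n-1})\ne\varnothing$. The hard part will be making the second step rigorous -- converting the path‑space fibration into a covering space while keeping the free involution under control -- and, to a lesser extent, the symmetry bookkeeping in the third step that singles out the canonical cover.
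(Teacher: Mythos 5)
Your proof is correct and, as far as one can tell from the evidence in the present paper, follows essentially the same route as the cited reference~\cite{symmotion}: the reduction of $\TC^S(\P^m)-1$ to the genus of the canonical double cover $F(\P^m,2)\to B(\P^m,2)$ proceeds there too via the straight-line (here great-circle) path section and the change-of-frame $(x,y)\mapsto(x+y,x-y)$, both of which this paper displays explicitly as the maps $g$ and $\Psi$ in the diagram near the end of Section~\ref{smp}. Your Stiefel-manifold/$D_8$ bookkeeping is a clean way to see that $\Psi$ interchanges the two relevant double covers of $B(\P^m,2)$, and the concluding appeal to Schwarz's identification of the genus of a $\mathbb{Z}/2$-cover with compressibility of its classifying map is exactly the restatement recorded at the start of Section~\ref{secprojuno}.
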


\begin{nota}\label{obstr1}{\em
Haefliger's map~(\ref{haefligermap}), $\eta_2\colon\emb(M,\mathbb{R}^n)\to 
\map^{\mathbb{Z}/2}(F(M,2), S^{n-1})$, implies that the inequality 
$\TC^S(\P^m)\leq n$ is a consequence of
the existence of an embedding $\P^m\subset\mathbb{R}^n$.
}\end{nota}

The rest of the paper is organized as follows. 
Sections~\ref{secprojuno}--\ref{secprojcinco}
describe four projects aiming to get 
a hold on $\TC^S(\P^m)$ and $\emb(\P^m)$. 
Potential examples on the use of these projects are 
given in Section~\ref{redsymmprod}.
The reader interested in the proof of
Theorems~\ref{vendimia} and~\ref{lacohofinal} 
should proceed directly to 
Section~\ref{secexa}, going back as needed (explicit 
cross-references are given) to Section~\ref{secprojuno}
for the general strategy, and to Section~\ref{redsymmcalc}
for a preliminary $\F2$-cohomology calculation 
due to Haefliger. 
The final Section~\ref{smp} is devoted to (a) developing the concept of 
a symmetric motion planner for a mechanical system, (b) studying its 
relation with the symmetric topological complexity of the space of states
of the system (proof of Theorem~\ref{tcsvsslr}), 
and (c) sketching the construction of a concrete symmetric motion 
planner, with the least possible number of local rules, when the 
state space of the system is $\mathrm{SO}(3)$.

\bigskip\noindent 
{\bf Acknowledgments}

\smallskip
The author wishes to thank Fred Cohen and Peter Landweber
for many useful suggestions for computing the integral 
cohomology ring of the unordered configuration space
of two distinct points in $\P^m\,$---a central object for next section's
purposes. Cohen's suggestion of using the Bockstein spectral 
sequence approach eventually led the author to the calculations described 
at the end of Examples~\ref{ejemp2},
clarifying a subtle point in the main Cartan-Leray-Serre spectral sequence
calculation of Section~\ref{secexa}.
The author wishes to specially thank Peter Landweber for carefully
reading this manuscript and for many fine suggestions throughout the
development of this paper. In particular,
Section~\ref{smp} arose from an observation of Landweber on an
earlier version of Theorem~\ref{vendimia}.

\section{The first project}\label{secprojuno}
Theorem~\ref{mainsymmotion} can be restated by saying that
$\TC^S(\P^m)$ is the smallest positive integer $n$ such that
the classifying map for the line
bundle $\zeta_m$ associated to the canonical 
$\mathbb{Z}/2$-principal projection
$F(\P^m,2)\to B(\P^m,2)$ admits a homotopy compression
\begin{equation}\label{compression}
\zeta\colon B(\P^m,2)\to\P^{n-1}.
\end{equation}
A Hopf-type approach to this 
problem is then given by 
analyzing, with mod $2$ singular cohomology, the algebraic
possibilities for a potential map~(\ref{compression}). 
Namely, the vanishing of the $n$-th power of 
$w_1(\zeta_m)$ is a necessary condition for the existence of a 
compressed map $\zeta$ as 
in~(\ref{compression})---and thus for an embedding 
$\P^m\subset\mathbb{R}^n$. The method was known in
the 60's~(\cite{handel}) and, during the subsequent decade,
it was systematically used 
to evaluate, within Haefliger's metastable range,
groups of embeddings of low efficiency~(\cite{bausum,feder,yasui1,yasui2})
for a given manifold $M$. The main input for those computations 
(and their answer) is given by the integral cohomology groups of $B(M,2)$
with both simple and local coefficients.

\medskip
The interest in this section focuses on $M=\P^m$, 
keeping more of the original motivation 
in~\cite{handel}, but without having to restrict attention
to Haefliger's metastable range~(\ref{tcsenelmeta}).
For instance, the explicit cases worked out 
at the end of Section~\ref{redsymmcalc}
allow us to recover, for $m\leq8$,  the lower bounds
for $\TC^S(\P^m)$ obtained in~\cite{symmotion} using ideas in~\cite{FGsymm}.
But integral coefficient cohomology {\it rings} 
are now central, and even more interesting is the possibility that,
by replacing singular cohomology by suitable generalized
cohomology theories, one could get
important improvements on the lower bounds for $\TC^S(\P^m)\,$---which, 
in view of Remark~\ref{obstr1}, might even lead to new nonembedding 
results for $\P^m$. A number of potential situations 
of this sort are described in Section~\ref{redsymmprod}.
The idea is in the same spirit as the one exploited 
in~\cite{Astey,Davistrong}
(for immersions rather than embeddings) using forms of complex cobordism
instead of singular cohomology---compare to
item~II in Section~\ref{lasecciontres}.

\smallskip
In this direction, the first proposed project is:

\begin{proyecto}\label{problema1}{\em\ 

\vspace{-2mm}
\begin{enumerate}
\item\label{p1.1} Describe (enough of) the integral cohomology 
{\em ring} $H^*(B(\P^m,2))$.
\item\label{p1.2} (Profit:)~Apply the resulting information 
within the Hopf-type approach described right after~(\ref{compression}) 
to get (new, perhaps optimal in
certain low dimensional cases) lower bounds for 
$\TC^S(\P^m)\,$---and potentially for $\emb(\P^m)$.
\end{enumerate}
}\end{proyecto}

The case of mod $p$ coefficients in part~\ref{p1.1} of Project~\ref{problema1}
was treated by Haefliger~\cite{haefliger} ($p$=2) and F\'elix-Tanr\'e~\cite{FT}
(odd prime $p$). Haefliger's method is reviewed in Section~\ref{redsymmcalc}.
The information is used in Section~\ref{secexa}, following 
the ideas in Project~\ref{problema1}, to compute $\TC^S(\P^3)$.

\smallskip
As far as the author knows, the integral cohomology ring---or, for 
that matter, its additive structure---required in 
part~\ref{p1.1} of Project~\ref{problema1} is currently 
unknown (\cite{bausum} gives a theoretical description of the 
Bockstein spectral sequence for $B(\P^m,2)$ when $m$ is odd, 
but the explicit calculations get combinatorially out of hands 
as $m$ increases). Information about
this ring would in addition pave the way for the corresponding calculation
in terms of some (advantageous for the Hopf-type approach) 
generalized cohomology theory---e.g.~complex 
cobordism. Furthermore, information on the integral cohomology 
ring $H^*(B(\P^m,2))$
is not only usable as described in Project~\ref{problema1}, but it
would lay the ground for Project~\ref{problema3}, which is presented next.

\section{The second project}
Since $\P^{n-1}$ classifies line bundles whose 
$n$-th Whitney multiple admits a nowhere zero section~(\cite{sam}),
the considerations in~(\ref{compression}) imply that 
Theorem~\ref{mainsymmotion} can be recast in the following terms:  
{\em $\TC^S(\P^m)$ is the smallest positive integer $n$ such that $n\zeta_m$
admits a nowhere zero section.} In particular,
the Hopf-type approach in the previous project can be interpreted
in terms of the observation that (generalized) Euler classes give initial
obstructions to sectioning $n\zeta_m$ (indeed, this is the 
starting point in the classical obstruction theory 
approach of~\cite{handel}). For a given cohomology 
theory $E$, much of the problem in the Euler class 
setting comes from the fact that, due to a possible lack of $E$-orientability
of $n\zeta_m$ (or $\zeta_m$, for that matter),
the calculation of the $E$-Euler class $\chi_E(n\zeta_m)$
might become a very difficult task---but note that generalized 
Euler classes are defined even without orientability 
assumptions~(\cite{crabb}). This difficulty is largely redeemed by the nice
fact that the cohomotopy Euler class is the unique 
obstruction for the mono-sectioning problem of bundles of relatively 
large dimension. Concretely, as indicated in~\cite{crabb}, a
bundle $\alpha$ over a complex $X$ satisfying 
\begin{equation}\label{crabbstable}
2\dim(\alpha)\ge\dim(X)+3
\end{equation}
admits a nowhere zero section if and only if the cohomotopy Euler
class $\chi_{S^0}(\alpha)$ vanishes. This fact is very 
effectively put to work by Stolz
in~\cite{stolz} in order to give a complete description of
the first Whitney multiple of $\lambda_m$ admitting a nowhere zero section,
where $\lambda_m$ is the canonical line bundle over a given $\P^{2m+1}$.

\medskip
Since an embedding $\P^m\subset\mathbb{R}^n$ can exist only with $n>m$,
and since
\begin{equation}\label{finito}
\mbox{$B(\P^m,2)$ has the homotopy type of a closed 
$(2m-1)$-dimensional manifold}
\end{equation}
(see~\cite[Proposition~2.6]{handel}),
condition~(\ref{crabbstable}) holds in all relevant instances
of $\alpha=n\zeta_m$. This situation can now be used to complement 
Project~\ref{problema1}, whose long term goal can be thought of as
finding suitable cohomology theories $E$ where $\chi_E(n\zeta_m)\neq0$ 
(thus obtaining lower bounds for $\TC^S(\P^m)$ and, consequently, 
for $\emb(\P^m)$). Indeed, the next project is concerned with
getting upper bounds for $\TC^S(\P^m)$ by means of 
the exploration of the cohomotopy Euler class of $n\zeta_m$ for suitable
values of $n$. 

\begin{proyecto}\label{problema3}{\em \ 

\vspace{-2mm}
\begin{enumerate}
\item Adapt Stolz's method in order to identify instances 
where the cohomotopy Euler class of $n\zeta_m$ vanishes.
\item (Profit:)~This would produce upper bounds for the symmetric 
topological complexity of projective spaces, as well as  
(potentially new) embeddings of those manifolds 
in Haefliger's metastable range.
\end{enumerate}
}\end{proyecto}

Stolz's method for studying cohomotopy Euler classes 
makes essential use of the calculability of
the corresponding integral (singular) cohomology Euler classes.
It is in this sense that the integral cohomology calculations needed
for Project~\ref{problema1} would also find application in 
Project~\ref{problema3}. 

\section{The third project}\label{lasecciontres}
While the numerical behavior of $\TC^S(\P^m)$ is the central issue 
in the first two projects,
the next two will concern, instead, the higher obstructions 
one could meet in the Taylor tower~(\ref{tower}) for $M=\P^m$.
Thus, the next project can be thought of an extension
of (the cobordism approach in) Project~\ref{problema1}.

\medskip
Start by recalling Hirsch's interpretation of 
the immersion problem for a closed
manifold $M$ in terms of the geometric 
dimension of its stable normal bundle $\nu_M$: the 
minimal codimension of Euclidean immersions of $M$ 
agrees with the smallest positive integer $d$ for which the map 
$M\to BO$ classifying $\nu_{M}$  
admits a homotopy factorization $$M\to BO(d)\hookrightarrow BO.$$
Obstruction theory then decomposes this task into small goals 
in which one has to deal with the $k$-invariants
in a suitably flavored Postnikov tower for (a fibration homotopy 
equivalent to) the inclusion $BO(d)\hookrightarrow BO$.
Such obstructions are theoretically easy to describe 
(they lie in singular cohomology 
groups), but are very hard to handle in practice (not only do they 
require information about the homotopy 
fiber of $BO(d)\hookrightarrow BO$, but obstructions can appear in the tower
at a high level involving difficult indeterminacy issues). 
For real projective spaces, 
the latter problem was avoided in~\cite{Davistrong} (with remarkable results)
by considering just one obstruction, a key Euler class
in Brown-Peterson theory. This primary obstruction
captures a great deal of information, and can be handled efficiently by means 
of algebraic methods. Although there is a corresponding 
version of such an obstruction for the embedding (as opposed to immersion) 
dimension of projective spaces~(\cite{asteyobstr}), the
comments below (particularly item II) should be thought of as suggesting
the possibility of a primary obstruction for the embedding problem
of projective spaces, similar in spirit to that in~\cite{Davistrong}, 
but now giving a complete obstruction within Haefliger's metastable range.

\medskip
The question of identifying secondary 
(and higher order) cobordism obstructions in the
Euclidean embedding problem of a given manifold arises 
very naturally from the Taylor expansion point of view 
(in the context of regularly deforming a 
given immersion into an embedding). For instance, just as Haefliger
pointed out the primary obstruction to lifting an element
through $r_2$ in~(\ref{tower}), the (secondary) obstruction to lifting 
through $r_3$ lies in a certain twisted cobordism group~(\cite{munson}). 
Now, as explained in the comments prior to Theorem~\ref{mainsymmotion},
for projective spaces
such higher obstructions can only appear in the cases (outside 
Haefliger's metastable range) indicated in~(\ref{exceptions}). 
These assertions have two interesting (potential) consequences:
\begin{itemize}
\item[I.]
High-order analysis of
obstructions (secondary obstructions in particular) 
in the Taylor tower~(\ref{tower}) has good possibilities 
to detect new phenomena in the embedding problem of {\it low dimensional}
(read: {\it outside Haefliger's meta\-stable range})
projective spaces---see the concrete situations worked out in 
Section~\ref{redsymmprod}.
\item[II.] (Again with $M=\P^m$) If
it is possible to identify the Brown-Peterson Euler class
in the Hopf-type approach in Project~\ref{problema1} with Haefliger's 
obstruction for lifting 
through $r_2$ in~(\ref{tower}), then this would
mean that, within Haefliger's metastable range (e.g.~for $m\ge16$),
this Brown-Peterson (primary) obstruction would be the {\em only one} for the 
embedding problem of projective spaces. It is very appealing to
compare this possibility with the ``feeling''
that the negative immersion results in~\cite{Davistrong} seem
to be closer to optimal than the general positive immersion
results currently known for projective spaces. But the advantage in the 
new proposed embedding setting is that the resulting 
obstruction would be complete, thus
providing formal support for the embedding analogue of the feeling 
mentioned above.
\end{itemize}

The most difficult part in the previous setting would seem to be the 
identification (and manipulation) of the cobordism groups
containing high-order obstructions (not to mention
the actual calculation of the obstructions). Thus, the new  
project is:

\begin{proyecto}\label{problema2}{\em \ 

\vspace{-2mm}
\begin{enumerate}
\item\label{p2.1} (In the direction of I above:)~Give an explicit 
($=\,$manageable) description 
(recall $M=\P^m$) of the 
cobordism group containing the obstruction for lifting, through
$r_3$, a given element in ${\cal T}_2{\cal E}_n^{\P^m}(\P^m)$. Since this 
question is intended for values of $m$ outside Haefliger's metastable range,
the required model for ${\cal T}_2{\cal E}_n^{\P^m}(\P^m)$ would not be the space
$\map^{\mathbb{Z}/2}(F(\P^m,2),S^{n-1})$, but the usual space
Iso$^{\mathbb{Z}/2}(\P^m\times\P^m,\mathbb{R}^n\times\mathbb{R}^n)$ of strict
isovariant maps.
\vspace{-2mm}
\item\label{p2.2} Identify instances where this cobordism group
vanishes.
\vspace{-2mm}
\item\label{p2.3} (Profit:)~Except for a few very low-dimensional projective
spaces (specified in Section~\ref{redsymmprod}), 
the resulting lifted map would be within the 3/4 range ($k=3$ 
in~(\ref{equivalencerange})), so the corresponding embedding would be for free.
\vspace{-2mm}
\item\label{p2.4} (Profit:)~On 
the other hand, if one could prove nontriviality of 
Munson's obstruction~\cite{munson} 
for {\it any} strict isovariant map, then this would 
certainly imply a corresponding nonembedding result.
\vspace{-2mm}
\item\label{p2.5} (In the direction of II above:)~Sort out the hoped-for 
identification of primary 
obstructions mentioned in II above. If this does not work,
then find an {\em algebraic} characterization 
(some sort of cobordism group?)~of Haefliger's double obstruction
for lifting through $r_2$---(profit:)~such an
obstruction is then the only one,
within Haefliger's metastable range, for the embedding problem of 
projective spaces.
\end{enumerate}
}\end{proyecto}

For part~\ref{p2.2} of Project~\ref{problema2} to make better sense, one
would need to be specific about how to identify 
good ``instances'', i.e., those with high chances for Munson's secondary 
obstruction to vanish. 
Indeed, one needs to know {\it where} to
look for potential isovariant maps (i.e.~those in part~\ref{p2.1} of 
Project~\ref{problema2}) with trivial Munson's secondary obstruction. 
In view of Theorem~\ref{mainsymmotion} 
and Remark~\ref{obstr1}, a first approximation comes from 
the knowledge of the value of $\TC^S(\P^m)$---the motivation
for the two previous projects. And as a first step in this 
direction, Section~\ref{redsymmprod} 
describes what the author knows about concrete values of $\TC^S(\P^m)$ for low 
values of $m$, as well as about the expectations for high-order 
obstructions in~(\ref{tower}), and possible ways to manage them
by comparing with known information on the embedding dimension
of low-dimensional projective spaces (indeed, functoriality issues
might be helpful toward part~\ref{p2.4} of Project~\ref{problema2}).

\section{The fourth project}\label{secprojcinco}
The last project is of a more theoretical nature:
it has to do with finding models suitably ``approximating'' the
terms ${\cal T}_k{\cal E}_n^{M}$ in~(\ref{tower}). The idea is still in
a very crude stage, and is stated likewise.

\begin{proyecto}\label{problema4}{\em \ 
For $k\ge3$, find a space that models ${\cal T}_k{\cal E}_n^{M}(M)$ 
in a way that
resembles how $\map^{\mathbb{Z}/2}(F(M,2),S^{n-1})$
models ${\cal T}_2{\cal E}_n^{M}(M)$. 
}\end{proyecto}

Haefliger-type models for ${\cal T}_k{\cal E}_n^{M}$ have 
certainly been described in~\cite{GKWmodels}, but the author 
does not know how manageable those models are for concrete computations
(e.g.~with $M=\P^m$).
The idea here is to find alternative models which
can be handled more naturally from an algebraic topology point of 
view---even if this means concentrating on $M=\P^m$.

\section{Putting the projects to work}\label{redsymmprod}
Recall from Remark~\ref{obstr1} that $\emb(\P^m)$, the embedding dimension 
of $\P^m$, is an upper bound for $\TC^S(\P^m)$, and that 
equality actually holds when $\TC^S(\P^m)$ satisfies Haefliger's
metastable range condition~(\ref{tcsenelmeta})---e.g.~when $m\geq16$.
This section starts by describing what the author knows about $\TC^S(\P^m)$
and $\emb(\P^m)$ for low values of 
$m$ (potentially outside Haefliger's metastable range). Data
is summarized in Table~\ref{tabla},
where cases satisfying~(\ref{tcsenelmeta})
have been marked with a star.
\begin{table}
\centerline{
\begin{tabular}{|c|c|c|c|c|c|c|c|c|c|c|c|c|c|c|c|}\hline
$m$ & $1$ & $2$ & $3$ & $4^\star$ 
& $5$ & $6$ & $7$ & $8^\star$ & $9^\star$ 
& $10^\star$ & $11$ 
& $12$ & $13^\star$ &$14$ & $15$ \\ \hline
$\rule{0mm}{4mm}\emb(\P^m)$ & $2$ & $4$ & $5$ & $8$ & $9$ & $9..11$ & 
$9..12$ & $16$ & $17$ & 
$17$ & $17..18$ & $18..21$ & $22..23$ & $22..23$ & $23..24$ \\ \hline
$\rule{0mm}{4.3mm}\TC^S(\P^m)$ & $2$ & $4$ & ${\bf 5}$ & $8$ & $8..9$ 
& $8..9$ & $8..10$ & $16$ & 
$17$ & ${\bf 17}$ & ${\bf 17}..18$ & $18..21$ & $22..23$ 
& $22..23$ & $22..23$ \\ \hline
\end{tabular}}
\caption{$\emb(\P^m)$ vs.~$\TC^S(\P^m)$ for low values of $m$\label{tabla}}
\end{table}
The information is taken
from~\cite[Table~1 on page~480]{symmotion}, except for the boldface 
number 5  for $m=3$, and the two boldface numbers 17 
(which, after the required shift in notation, 
appear as 16 in~\cite{symmotion}) for $m=10,11$. The improvement for
$m=3$ is given by Theorem~\ref{vendimia}, while those for  $m=10,11$
follow from Theorem~\ref{mainsymmotion},
the known case $\TC^S(\P^9)=17$, and the $\mathbb{Z}/2$-equivariant 
inclusions $F(\P^9,2)\hookrightarrow F(\P^{10},2)\hookrightarrow F(\P^{11},2)$.

\medskip
In searching for exceptional cases of $m\,$---``exceptional'' 
in the sense that the expected equality
$\TC^S(\P^m)=\emb(\P^m)$ 
fails---, the situations to consider are:

\medskip\noindent{\bf Case $m=3$:} This was the first
undecided situation before this paper. The question was~(\cite{symmotion}):
\begin{equation}\label{question1}
\mbox{\em Is \ }\TC^S(\P^3)=4, \mbox{\em \ or is \ }\TC^S(\P^3)=5?
\end{equation}
As shown in Section~\ref{secexa},
the method in Project~\ref{problema1} allows one 
to show that the second possibility in~(\ref{question1})
is the correct answer. Remark~\ref{suerte} suggests that it might 
be possible to resolve the cases $m=5$ and $m=6$ discussed below 
along the same lines.

\medskip\noindent{\bf Case $m=5$:} 
The focus is on answering the question:
\begin{equation}\label{question2}
\mbox{\em Is \ }\TC^S(\P^5)=8, \mbox{\em \ or is \ }\TC^S(\P^5)=9?
\end{equation}
But there is an interesting subtlety not present in~(\ref{question1}).
Start by observing that the ($n=8$)-Taylor tower~(\ref{tower}) for $\P^5$ is 
analytic and has a nonempty\footnote{Since $\imm=\TC\leq\TC^S$, the nonempty
starting space of immersions is also the case in all the remaining 
situations of the section.} 
starting space $\imm(\P^5,\mathbb{R}^8)$, 
but that there must be a nontrivial obstruction
since the embedding dimension of $\P^5$ is known to be 9. 
Moreover,~(\ref{equivalencerange}) implies that any such obstruction 
has to show up when trying to lift through $r_2$, $r_3$, or $r_4$.
($\eta_4$ is an equivalence in the present situation). 
But then a potential (exceptional) case 
$\TC^S(\P^5)=8$ does not rule out a possible scenario where
the obstruction arises right at the very first lifting $r_2$; 
it would just mean that, in such a hypothetical situation, the 
space $\map^{\mathbb{Z}/2}(F(\P^5,2),S^7)$ wouldn't be
the right model for ${\cal T}_2{\cal E}_8^{\P^5}$. These possibilities 
have been explained in some detail since, from a different viewpoint, they 
could be used to actually produce new embeddings of higher-dimensional 
projective spaces---as discussed in the next cases.

\medskip\noindent{\bf Case $m=6$:} 
Here the focus is on answering~(\ref{question2}) with $\P^6$ replacing $\P^5$.
Note that the exceptional situation with $\TC^S(\P^6)=8$ is not quite
similar to that for $m=5\,$---there is a possible lack of analyticity now. 
However, there is a new interesting point coming from the fact that 
the explicit value of $\emb(\P^6)$ is currently unknown. Namely, 
one could try to use 
the approach in Project~\ref{problema2} in order to construct new
embeddings of $\P^6$. For instance, the simplest case would be to take 
$M=\P^6$ and $n=10$ in~(\ref{tower})---trying to produce the 
(new\footnote{This could be interpreted as smoothing Rees' 
PL-embedding of $\P^6$ into $\mathbb{R}^{10}$~(\cite{rees}).}) embedding
\begin{equation}\label{new1}
\P^6\subset\mathbb{R}^{10}. 
\end{equation}
Although this situation is still not within Haefliger's 
metastable range, the known fact $\TC^S(\P^6)\le9$ seems to suggest that
there shouldn't be any obstruction to lifting a given immersion 
$g\colon\P^6\immto\mathbb{R}^{10}$ through
the corresponding tower~(\ref{tower}) to produce an element
$\widetilde{g}\in{\cal T}_2{\cal E}_{10}^{\P^6}(\P^6)$. 
But it is even more interesting to note that since
$\eta_3$ is an equivalence in our current range, there is just
one obstruction for lifting $\widetilde{g}$ to the desired 
embedding~(\ref{new1}): Munson's
(secondary) obstruction. So (potential profit!), 
if there is no such secondary obstruction, then
one would be left with the (previously unknown smooth) 
embedding~(\ref{new1}).

\medskip\noindent{\bf Case $m=7$:} 
This situation is entirely similar to the one in the previous case, 
with Project~\ref{problema2} being a potential 
tool for producing new embeddings of $\P^7$ (a possible embedding into 
$\mathbb{R}^{11}$, smoothing Rees', is now the new simplest case to try).
But in this case there is one further intriguing feature,
namely, the possibility of using 
the approach in Project~\ref{problema1} to prove $\TC^S(\P^7)>9$; this 
would imply the new nonembedding result $\P^7\not\subset\mathbb{R}^{9}$
(such a possible result would be `strong' in view of the 
parallelizability of $\P^7$).

\medskip\noindent{\bf Cases with $m\ge11$:} 
In all these cases 
$\eta_3$ is an equivalence in the relevant range, so the considerations 
about using Project~\ref{problema2} as discussed around~(\ref{new1}) 
apply here too (i.e.~Munson's secondary obstruction could play an
important role in constructing new embeddings for these $\P^m$).

\smallskip
Note that if one could settle the relation $\TC^S(\P^{11})>17$---following, 
say, the guidelines in Project~\ref{problema1}---, then not only 
the actual value of $\TC^S(\P^{11})$ would be settled, 
but it would also follow that $\emb(\P^{11})=18\,$---again, this would be 
a new result. Similar considerations apply to
$\P^{13}$ and $\P^{14}$, as well as to $\P^{12}$ if one could 
prove---the rather unlikely---$\TC^S(\P^{12})>20$. But it is interesting 
to note that a new embedding result, this time for $\P^{12}$, would 
also follow if 
one could actually prove $\TC^S(\P^{12})\leq20$; indeed, such an inequality
would produce the new embedding $\P^{12}\subset\mathbb{R}^{20}$ in
view of~(\ref{metarange}) and~(\ref{tcsenelmeta}).

\smallskip
But perhaps one of the most fruitful cases to consider is that of $\P^{15}$,
where functoriality properties could be exploited in an eventual 
analysis of obstructions to lift elements in Taylor towers. For instance,
in the exceptional case that $\TC^S(\P^{15})=22$, any actual
element $x\in{\cal T}_2{\cal E}_{22}^{\P^{15}}(\P^{15})$ would necessarily have
a nontrivial Munson's obstruction (because $\emb(\P^{15})>22$). 
Now, if such an obstruction were to depend only on the $14$-th (resp.~$13$-th)
skeleton of $\P^{15}$ (say by an explicit calculation), 
then the corresponding (restricted) element $\bar{x}\in
{\cal T}_2{\cal E}_{22}^{\P^{14}}(\P^{14})$
(resp.~${\cal T}_2{\cal E}_{22}^{\P^{13}}(\P^{13})$)
would also have a nontrivial Munson secondary obstruction to lift to
${\cal T}_3{\cal E}_{22}^{\P^{14}}(\P^{14})$ 
(resp.~${\cal T}_3{\cal E}_{22}^{\P^{13}}(\P^{13})$).
The point then is that, modulo usual primary and secondary indeterminacy 
considerations, this could lead to the (again new, but now optimal) 
potential nonembedding result $\P^{14}\not\subset\mathbb{R}^{22}$ 
(respectively $\P^{13}\not\subset\mathbb{R}^{22}$ and 
$\P^{14}\not\subset\mathbb{R}^{22}$).

\smallskip
Finally, and although the following remark has been noted in a general form 
in the first paragraph of the case $m\ge11$, let us observe that,
in view of Rees' PL embedding $\P^{15}\subset\mathbb{R}^{23}$~(\cite{rees}),
and since at any rate $\TC^S(\P^{15})\le23$, 
Munson's secondary obstruction for an element in 
${\cal T}_2{\cal E}_{23}^{\P^{15}}(\P^{15})$ is the only obstruction
to producing a potential embedding $\P^{15}\subset\mathbb{R}^{23}
\,$---again,
this would be a new result, optimal in fact, that would smooth
Rees' PL embedding.

\section{Mod 2 cohomology of $B(\P^m,2)$}\label{redsymmcalc}
This section starts with a description of Haefliger's method~\cite{haefliger} 
for computing the cohomology ring $H^*(B(M,2);\F2)$ for
a closed smooth $m$-dimensional manifold $M$ (see~\cite{FGsymm}).
This information is then analyzed for $M=\P^m$ 
in connection with some of the lower bounds 
for $\TC^S(\P^m)$ in Table~\ref{tabla}.
Unless otherwise specified, throughout this section $H^*(X)$ will stand for
cohomology groups (or algebra, depending on the context)
where coefficients are taken 
in $\F2$, the field with
2 elements. The notation $\Z2$ or $\mathbb{Z}/2$ 
will also be used when referring to the group structure in $\F2$.

\medskip
Start with the Borel construction 
$S^\infty\times_{\Z2}M^2$, the total 
space in the standard fibration
\begin{equation}\label{borelfib}
M^2\to S^\infty\times_{\Z2}M^2\to\P^\infty
\end{equation}
where $\Z2$ acts on $M^2$ by swapping factors.
Steenrod showed (see~\cite[Subsection~2.4]{haefliger}
for a sketch of a proof of this and the 
subsequent facts in this paragraph)
that the Serre spectral sequence for this 
fibration collapses, so that one gets a 
ring isomorphism
\begin{equation}\label{collapse}
H^*(S^\infty\times_{\Z2}M^2)\cong 
H^*(\P^\infty;H^*(M)^{\otimes2}).
\end{equation}
Here $\pi_1(\P^\infty)=\Z2$ acts on 
$H^*(M)^{\otimes2}$ by swapping factors.
In particular the action is trivial 
on {\em diagonal} elements $x\otimes x$,
whereas $x\otimes y$ and $y\otimes x$ generate a split free
$\Z2$-submodule if $x\neq y$ and $x\neq0\neq y$. 
Thus, fixing\footnote{The author thanks 
Peter Landweber for indicating the fact that the correct definition of $D$
in~\cite{haefliger} should be given in terms of a basis of $H^*(M)$, and 
by noticing that the resulting $N$ is independent of the chosen basis.} 
a basis $\{a_r\}$ of $H^*(M)$,~(\ref{collapse})
transforms into the ring isomorphism
\begin{equation}\label{DN}
H^*(S^\infty\times_{\Z2}M^2)\cong 
\left(\F2[z]\otimes D\right)\oplus N
\end{equation}
where $z$ is the image of the generator $z\in H^1(\P^\infty)$
under the projection map in~(\ref{borelfib}), $D$ is additively
generated by the diagonal elements $1\otimes a_r\otimes a_r$
(also denoted by $a_r\otimes a_r$), and $N$ is additively
generated by the $\Z2$-invariant sums $a_r\otimes a_s+a_s\otimes a_r$ 
(with $r\neq s$). Note that~(\ref{DN})
is an isomorphism of 
($H^*(\P^\infty)=\F2[z]$)-algebras, where $z$
acts trivially on $N$ but freely on $D$. 
In particular, the product of an
element $z^i\otimes a_r\otimes a_r\in\F2[z]\otimes D$ and an 
element $a_s\otimes a_t+a_t\otimes a_s\in N$ is trivial for $i>0$, whereas  
$(a_r\otimes a_r)(a_s\otimes a_t+a_t\otimes a_s)=
a_ra_s\otimes a_ra_t+a_ra_t\otimes a_ra_s$ is easily seen to lie 
in $N$ (in particular $N$ is a subring---but $D$ is not). Furthermore,
the Steenrod algebra action on $H^*(M)$
is closely related to the  
diagonal map $\Delta\colon\P^\infty\times 
M=S^\infty\times_{\Sigma_2}M\to S^\infty\times_{\Sigma_2}M^2$,
where $\Z2$ acts trivially on $M$. 
Indeed, $\Delta^*$ is an $\F2[z]$-algebra 
map vanishing on $N$ such that
\begin{equation}\label{cohodelta}
\Delta^*(x\otimes x)=1\otimes x^2+
z\otimes\Sq^{k-1}x+z^2\otimes\Sq^{k-2}x+
\cdots+z^k\otimes x
\end{equation}
for $x\in H^k(M)$ (see for instance~\cite[page~500]{hatcher}).

\smallskip
To determine the multiplicative structure in $H^*(B(M,2))$,
Haefliger considers the map induced in cohomology by the inclusion 
$$j\colon B(M,2)\simeq S^\infty\times_{\Z2}
F(M,2)\hookrightarrow S^\infty\times_{\Z2}M^2.$$ 
Of course $j^*$ is a ring morphism, but it
turns out to be surjective. Therefore the 
multiplicative structure in $H^*(B(M,2))$ will be determined
from that of~(\ref{DN}) once $\ker j^*$ is described---in 
Theorem~\ref{haefligerstheorem} below.
First a little notation. Consider the push-forward map
\begin{equation}\label{pushforward}
\Delta_!\colon H^{*-m}(M)\to H^*(M^2)
\end{equation}
induced by the diagonal embedding $\mathrm{diag}
\colon M\hookrightarrow M\times M$. 
This is given by 
\begin{equation}\label{sorpresa}
\Delta_!(x)=(1\otimes x)\smallsmile\delta=(x\otimes 1)\smallsmile\delta,
\end{equation}
where $\delta\in H^m(M^2)$ is the diagonal 
cohomology class---the image of the fundamental class
under the restriction map
$H^m(M\times M,M\times M-\mathrm{diag})\to H^m(M\times M)$.
The final piece of information Haefliger needs is given by
the (degree $m$) endomorphism 
$\varphi\colon H^{*}(\P^\infty\times M)\to
H^{*}(\P^\infty\times M)$ given by multiplication
by the class
\begin{equation}\label{multiplication}
z^{m}\otimes w_{0}+z^{m-1}\otimes w_{1}+\cdots+
1\otimes w_{m},
\end{equation}
where $W=W_{M}=\sum_{i\ge0}^mw_{i}$ is the total 
Stiefel-Whitney class of $M$.

\begin{teorema}[Haefliger \cite{haefliger}]
\label{haefligerstheorem}
There is a commutative diagram with exact rows

\medskip\centerline{\xymatrix{0 \ar[r] & 
H^{*-m}(M)\ar[r]^{\Delta_!} 
& H^*(M^2) & & \\ 0 \ar[r] & H^{*-m}(\P^\infty\times M) 
\ar[u]^{r_1} \ar[r]^{\mu} \ar@{=}[d] & 
H^*(S^\infty\times_{\Z2}M^2) \ar[u]_{r_2} \ar[d]^{\Delta^*} 
\ar[r]^>>>>>{j^*} & H^*(B(M,2)) \ar[r] & 0 \\ 0 \ar[r] & 
H^{*-m}(\P^\infty\times M) \ar[r]^{\varphi} & 
H^*(\P^\infty\times M) & & }}

\medskip\noindent
Here $r_2$ is induced by the fiber inclusion in~{\em(\ref{borelfib})};
the situation for $r_1$ is similar (in terms of the 
identification $\P^\infty\times M=S^\infty\times_{\Z2}M$).
\end{teorema}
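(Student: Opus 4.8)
The plan is to realize all three rows and two vertical maps as pieces of a single well-understood Gysin/transfer apparatus and then read off exactness from the geometry. First I would set up the top row: the diagonal embedding $\mathrm{diag}\colon M\hookrightarrow M^2$ is a closed codimension-$m$ embedding of closed manifolds, so it has a Gysin (push-forward) map $\Delta_!\colon H^{*-m}(M)\to H^*(M^2)$, and the complement $M^2-\mathrm{diag}$ deformation retracts onto $F(M,2)$ (here $F(M,2)$ sits inside $M^2$ as $M^2-\mathrm{diag}$ up to homotopy). The long exact sequence of the pair $(M^2,M^2-\mathrm{diag})$, combined with the Thom isomorphism $H^{*-m}(M)\cong H^*(M^2,M^2-\mathrm{diag})$ identifying the Thom class with $\delta$, shows that the composite $H^{*-m}(M)\xrightarrow{\Delta_!}H^*(M^2)\to H^*(F(M,2))$ is zero and that $\Delta_!$ is injective precisely because, over $\F2$, the normal bundle of the diagonal (which is $TM$) is $\F2$-orientable, so the Euler class phenomena that could kill injectivity do not occur; formula~(\ref{sorpresa}) for $\Delta_!$ is then the standard module property of the Gysin map. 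This establishes exactness of the top row and of the middle row at $H^*(B(M,2))$ (using $j^*$ surjective, which I would quote from the preceding discussion, or reprove via the collapse in~(\ref{collapse}) together with the fact that the restriction $H^*(M^2)\to H^*(F(M,2))$ is onto).

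Next I would upgrade the top-row picture to the Borel construction. Applying $S^\infty\times_{\Z2}(-)$ to the triple $(\mathrm{diag},M^2,M^2-\mathrm{diag})$ and using $S^\infty\times_{\Z2}M\simeq\P^\infty\times M$ (the $\Z2$-action on the diagonal copy of $M$ being trivial) gives the equivariant Gysin map $\mu\colon H^{*-m}(\P^\infty\times M)\to H^*(S^\infty\times_{\Z2}M^2)$, and the same pair sequence—now for the equivariant pair, with equivariant Thom isomorphism along the normal bundle of the equivariant diagonal—yields exactness of the middle row at $H^*(S^\infty\times_{\Z2}M^2)$ and injectivity of $\mu$. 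The left square commutes because $r_1$ and $r_2$ are fiber-inclusion restrictions and the equivariant Gysin map is natural with respect to restriction to a fiber; $r_1$ being literally the identity under the stated identification is what lets me write $\mu$ and $\varphi$ on the same group $H^{*-m}(\P^\infty\times M)$.

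The remaining, and I expect principal, point is the identification of the bottom row: I must show that the equivariant diagonal class—i.e.\ $\Delta^*$ applied to the image of $\mu$—is multiplication by the twisted total Stiefel--Whitney class~(\ref{multiplication}), so that the composite $\Delta^*\circ\mu$ equals $\varphi$ and its injectivity/exactness on the bottom is exactly that of $\varphi$. The mechanism is: $\Delta^*$ of the equivariant Thom class $\delta$ is computable by the equivariant analogue of the self-intersection formula, $\mathrm{diag}^*\delta = e(\nu_{\mathrm{diag}}) = e(TM)$, and equivariantly the normal bundle of the diagonal in $S^\infty\times_{\Z2}M^2$ restricted along $\Delta$ acquires the twist by the sign representation recorded by the powers of $z$ in~(\ref{cohodelta}); carrying $w_i(TM)=w_i$ through this twist produces precisely~(\ref{multiplication}). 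Concretely I would feed~(\ref{cohodelta}) (the known formula for $\Delta^*(x\otimes x)$) into the expression $\Delta_!(x)=(x\otimes1)\smile\delta$ of~(\ref{sorpresa}) and sum over a basis, using Wu's formula to repackage the resulting Steenrod-square terms as Stiefel--Whitney classes; this is the one genuinely computational node, and the subtlety—flagged in the footnote about the correct definition of $D$—is bookkeeping with the chosen basis $\{a_r\}$ of $H^*(M)$ so that the answer is manifestly basis-independent. Once $\Delta^*\mu=\varphi$ is in hand, commutativity of the right square is automatic, exactness of all three rows has been secured, and the diagram is assembled.
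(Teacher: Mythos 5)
The paper does not reprove this result: it is stated as a direct quotation of Haefliger~\cite{haefliger}, and the subsequent text simply draws consequences from it. So there is no ``paper's proof'' to compare against; I can only judge your argument on its own terms.

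Your overall architecture (Gysin sequence of the triple $(\mathrm{diag},M^2,F(M,2))$, its Borel lift, and the identification of the bottom row via the equivariant Euler class of the normal bundle $\nu_\Delta$) is the right one, and the identification of $\varphi$ as multiplication by the top Stiefel--Whitney class of $\ell\otimes TM$, whose expansion in powers of $z=w_1(\ell)$ is exactly~(\ref{multiplication}), is correct and is the genuinely computational heart of the statement. But there is a real gap in the way you secure exactness. You claim $\Delta_!$ is injective ``because over $\F2$ the normal bundle of the diagonal is $\F2$-orientable, so the Euler class phenomena that could kill injectivity do not occur.'' Over $\F2$ every bundle is orientable; orientability is what gives the Thom isomorphism $H^{*-m}(M)\cong H^*(M^2,F(M,2))$ and hence defines $\Delta_!$ in the first place, but it says nothing about whether the connecting map $H^{*-1}(F(M,2))\to H^{*-m}(M)$ in the long exact sequence of the pair vanishes, which is what injectivity of $\Delta_!$ actually requires. (An $\F2$-orientable bundle can certainly have a non-injective Gysin map.) The correct argument uses that $M$ is a closed manifold: under Poincar\'e duality $\Delta_!$ corresponds to $\mathrm{diag}_*\colon H_*(M;\F2)\to H_*(M^2;\F2)$, which is split injective because the diagonal admits a retraction (either projection $M^2\to M$). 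The same gap infects your justification that $j^*$ is surjective, since you reduce it to surjectivity of $H^*(M^2)\to H^*(F(M,2))$, which by the same long exact sequence is \emph{equivalent} to the injectivity of $\Delta_!$ you did not actually establish. Finally, once the fiber-level short exact sequence $0\to H^{*-m}(M)\to H^*(M^2)\to H^*(F(M,2))\to0$ is in hand, the passage to the Borel rows requires a word on why $\mu$ remains injective: the retraction argument does not lift equivariantly (the projection $M^2\to M$ is not $\Z2$-equivariant), so one must instead invoke the collapse~(\ref{collapse}) of the Serre spectral sequence for $M^2\to S^\infty\times_{\Z2}M^2\to\P^\infty$ together with the obvious collapse for the trivial fibration $\P^\infty\times M\to\P^\infty$, and compare filtration quotients; your proposal gestures at this but should state it explicitly rather than treating it as an automatic consequence of naturality.
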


\begin{nota}\label{steenrodND}{\em
As an easy consequence of the fact that 
$r_2$ is monic on $N=\ker\Delta^*$, one finds that
every Steenrod square $\Sq^i\colon
H^*(S^\infty\times_{\Z2}M^2)\to H^*(S^\infty\times_{\Z2}M^2)$ 
satisfies $\Sq^i(n)\in N$ for $n\in N$. Thus, $\Sq^i(n)$ can 
be computed directly in $H^*(M^2)$---with the Cartan formula.
On the other hand, according to~\cite[Lemma~11]{bausum} 
(see also~\cite[Section VI]{yo}), an element $a\otimes a\in D$
has $\Sq^1(a\otimes a)=(\Sq^1a)\otimes a+
a\otimes(\Sq^1a)$ only when $\deg(a)$ is even, otherwise
$\Sq^1(a\otimes a)=z\otimes a\otimes a+(\Sq^1a)\otimes a+
a\otimes(\Sq^1a)$.
}\end{nota}

A simple diagram chase gives:

\begin{corolario}\label{kernel}
Let $a\in H^*(S^\infty\times_{\Z2}M^2)$. The following
conditions are equivalent:
\begin{itemize}
\item[{\em 1.}] $j^*(a)=0$.
\item[{\em 2.}] $\Delta^*(a)=\varphi(b)$ and $\,r_2(a)=\Delta_!\circ 
r_1(b)$, $\,$for some $\,b\in H^{*-m}(\P^\infty\times M)$.
\end{itemize}
\end{corolario}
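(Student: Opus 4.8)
The plan is to obtain Corollary~\ref{kernel} as a formal consequence of the commutative diagram with exact rows in Theorem~\ref{haefligerstheorem}; no new geometry is needed. Since $j^*$ is surjective and the middle row is exact, $\ker j^* = \mathrm{im}\,\mu$, so condition~1 says precisely that $a = \mu(b)$ for some $b\in H^{*-m}(\P^\infty\times M)$. The whole task is to translate the single equation $a=\mu(b)$ into the two equations in condition~2 by pushing $b$ into the top and bottom rows, and conversely to recover $a=\mu(b)$ from those two equations. So I would phrase everything in terms of the element $b$ that both conditions produce.

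For the implication $1\Rightarrow2$, suppose $a=\mu(b)$. Commutativity of the bottom square gives $\Delta^*(a)=\Delta^*\mu(b)=\varphi(b)$ (using $\Delta^*$ equalizing the identification $\P^\infty\times M = S^\infty\times_{\Z2}M$ along the left vertical equality), and commutativity of the top square gives $r_2(a)=r_2\mu(b)=\Delta_!(r_1(b))=\Delta_!\circ r_1(b)$. That is exactly condition~2 with the same witness $b$. For the converse $2\Rightarrow1$, assume $\Delta^*(a)=\varphi(b)$ and $r_2(a)=\Delta_!\circ r_1(b)$ for some $b$. Form $a' := a-\mu(b)$. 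By the two commutativities just used, $\Delta^*(a')=\Delta^*(a)-\varphi(b)=0$ and $r_2(a')=r_2(a)-\Delta_!(r_1(b))=0$. So $a'$ lies in $\ker\Delta^*\cap\ker r_2$, and I want to conclude $a'=0$; then $a=\mu(b)\in\ker j^*$ and we are done. This is the one place where a short argument beyond pure diagram chasing is needed: I would invoke the description of $\ker\Delta^* = N$ together with the fact (Remark~\ref{steenrodND}) that $r_2$ is monic on $N$. Indeed $a'\in\ker\Delta^*$ means $a'\in N$ (the diagonal summand $\F2[z]\otimes D$ is exactly the complement of $N = \ker\Delta^*$, by~(\ref{cohodelta}) and~(\ref{DN})), and then $r_2(a')=0$ with $r_2|_N$ injective forces $a'=0$.

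The one subtlety worth flagging — and the step I expect to be the only genuine obstacle — is the claim $\ker\Delta^* = N$ and the injectivity of $r_2$ on it; strictly speaking the diagram in Theorem~\ref{haefligerstheorem} only displays $\Delta^*$ and $r_2$ as maps, not these two structural facts, so I would either cite them from~\cite{haefliger} (they are implicit in how the diagram is set up, and $\ker\Delta^*=N$ follows from~(\ref{DN}) and~(\ref{cohodelta}) since $\Delta^*$ is injective on $\F2[z]\otimes D$ and kills $N$) or record them as part of the hypotheses carried over from the construction. Everything else is a mechanical two-way chase through the two commutative squares, using the exactness of the middle row only for the identification $\ker j^* = \mathrm{im}\,\mu$, and using the left-hand vertical equality to make sense of comparing $b$ across the top and bottom rows. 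I would write the whole proof in three or four lines, emphasizing that the \emph{same} class $b$ serves as witness in both directions.
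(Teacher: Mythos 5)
Your proof is correct and is exactly the "simple diagram chase" the paper has in mind: the forward implication follows from commutativity of the two squares and $\ker j^*=\mathrm{im}\,\mu$, and the converse reduces, after forming $a'=a-\mu(b)$, to the fact that $r_2$ is monic on $N=\ker\Delta^*$, which the paper supplies explicitly in Remark~\ref{steenrodND} immediately before the corollary (so the "subtlety" you flag is already recorded there rather than left implicit). Your stronger conclusion $a'=0$, rather than merely $a'\in\mathrm{im}\,\mu$, also directly yields the uniqueness of $b$ that the paper notes in the paragraph following the corollary.
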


Since $\varphi$ is monic, the term $b$ in
Corollary~\ref{kernel} is unique; it is in fact the preimage 
of $a$ under the monomorphism 
$\mu\colon H^{*-m}(\P^\infty\times M)\to H^*(S^\infty\times_{\Z2}M^2)$---whose
complete image is of course the required $\ker j^*$.

\begin{nota}\label{structure}{\em
For calculations it is convenient to observe that
$\mu$ is a map of $\F2[z]$-modules---this
follows from the behavior of $\Delta^*$, and 
the described multiplicative structure in~(\ref{DN}).
}\end{nota}

Thus, concrete numerical calculations for the ring 
structure in $H^*(B(M,2))$ require knowledge of
the Stiefel-Whitney classes $w_i$, the action of the 
Steenrod algebra on $M$,
and the diagonal class $\delta$ associated to $M$. 
For the latter, it is convenient
to keep in mind the following characterization.

\begin{teorema}[Theorem~11.11 in~\cite{MR0440554}]
\label{diagonalclass}
Fix a basis $b_1,\ldots,b_r$ of $H^*(M)$, and let
$b'_1,\ldots,b'_r$ stand for the corresponding dual basis. Then
$\delta=\sum_{i=1}^{r}b_i\otimes b'_i.$
\end{teorema}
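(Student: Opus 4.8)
The plan is to reduce the computation of the diagonal class $\delta$ to the defining property of the dual basis via Poincaré duality, following the line of Milnor--Stasheff. First I would recall the basic characterization: $\delta\in H^m(M\times M)$ is uniquely determined by the fact that for every $x\in H^*(M)$ one has $(1\otimes x)\smallsmile\delta=\Delta_!(x)$ as in~(\ref{sorpresa}), or equivalently by the Künneth-level identity $\mathrm{diag}^*(\delta)=$ the mod-$2$ Euler class (top Stiefel--Whitney class) of $M$, together with the fact that $\delta$ is the image of the Thom class of the normal bundle of the diagonal. The key input is that over $\F2$ every closed manifold $M$ is Poincaré dual, so $H^*(M)$ is a finite-dimensional Poincaré duality algebra; in particular the pairing $\langle x,y\rangle=\big\langle x\smallsmile y,[M]\big\rangle$ is nondegenerate, and the dual basis $b'_1,\dots,b'_r$ of Theorem~\ref{diagonalclass} is defined by $\langle b_i\smallsmile b'_j,[M]\rangle=\delta_{ij}$.

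The core step is to expand an arbitrary candidate in the Künneth basis and pin down the coefficients by pairing. Writing, by Künneth, $\delta=\sum_{i,j}c_{ij}\,b_i\otimes b_j$ with $c_{ij}\in\F2$, I would slash $\delta$ against a class of the form $1\otimes b'_k$ (i.e.\ compute the push-forward formula~(\ref{sorpresa}) paired against $b'_k$ in the second factor), using that $\Delta_!$ is adjoint to $\mathrm{diag}^*$ with respect to the Poincaré duality pairing: the adjunction $\big\langle \Delta_!(x)\smallsmile(y\otimes z),[M\times M]\big\rangle=\big\langle x\smallsmile y\smallsmile z,[M]\big\rangle$ forces $\sum_j c_{ij}\langle b_j\smallsmile b'_k,[M]\rangle=\langle b_i\smallsmile 1,[M]\rangle$-type identities, which collapse to $c_{ij}=1$ exactly when $b_j$ is paired dually with the chosen $b_i$, i.e.\ when $j$ indexes $b'_i$. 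Re-indexing, this yields $\delta=\sum_i b_i\otimes b'_i$. (Equivalently, and perhaps more cleanly, I would invoke that $\delta$ is characterized by $\mathrm{pr}_{2!}\big((x\otimes 1)\smallsmile\delta\big)=x$ for all $x$, which is precisely the statement that $\sum_i b_i\otimes b'_i$ reproduces the identity on $H^*(M)$ under slant product, and uniqueness of $\delta$ finishes it.)

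Since this is verbatim Theorem~11.11 of~\cite{MR0440554} (with $\F2$ coefficients, so no sign or orientation bookkeeping is needed), the cleanest route is simply to cite it and sketch the above. The only point that needs a word of care is that the statement is being used here in the mod-$2$ setting for an arbitrary closed smooth $M$, where $\F2$-Poincaré duality is automatic and no orientability hypothesis is required; thus the finite basis $\{b_i\}$ and its dual $\{b'_i\}$ exist unconditionally. The main (minor) obstacle is bookkeeping the two adjunctions---$\Delta_!$ versus $\mathrm{diag}^*$, and the slant/slash product versus the cup product---consistently enough to see the coefficient matrix $(c_{ij})$ is the identity; once that is set up, the identity $\delta=\sum_i b_i\otimes b'_i$ is immediate.
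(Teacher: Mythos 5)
The paper does not prove this statement; it simply quotes it as Theorem~11.11 of Milnor--Stasheff, and your sketch is essentially a reconstruction of their argument, so you are taking the same route. Two small accuracy points worth flagging. First, the display
\[
\sum_j c_{ij}\langle b_j\smallsmile b'_k,[M]\rangle=\langle b_i\smallsmile 1,[M]\rangle
\]
in your middle paragraph is not the identity the adjunction actually produces: pairing $\delta=\sum_{i,j}c_{ij}\,b_i\otimes b_j$ against $b'_k\otimes b'_l$ via the K\"unneth pairing and the projection formula gives $c_{kl}=\big\langle b'_k\smallsmile b'_l,[M]\big\rangle$, i.e.\ $c_{kl}$ is the coefficient of $b_l$ in the expansion of $b'_k$ in the basis $\{b_j\}$, \emph{not} a Kronecker delta. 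The subsequent claim ``$c_{ij}=1$ exactly when $b_j$ is paired dually with the chosen $b_i$'' tacitly assumes the dual basis is a subset (or permutation) of the original basis, which is not hypothesized. The correct conclusion, that $c_{kl}=\langle b'_k\smallsmile b'_l,[M]\rangle$ is precisely the change-of-basis matrix from $\{b'_i\}$ to $\{b_j\}$, does yield $\delta=\sum_i b_i\otimes b'_i$ after substitution, so the outcome is right even if the stated intermediate identity is not. Second, your parenthetical ``cleaner route'' via the slant-product characterization (that $(1\otimes a)\smallsmile\delta$ slanted against $[M]$ reproduces $a$) is in fact the argument Milnor--Stasheff use, and avoids the bookkeeping problem entirely; leading with that rather than the coefficient matrix would make the write-up tighter. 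Everything else---including the observation that mod~$2$ coefficients remove all orientation and sign issues---is correct.
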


Next, Haefliger's analysis will be specialized to the case $M=\P^m$
(the author does not know if this has been done elsewhere; however, 
the calculation implicit from~\cite[Theorem~3.7]{handel} should be noted). 
Start from~(\ref{DN}) noticing that $D=\F2[\lambda]/\lambda^{m+1}$
where $\lambda=z\otimes z$. Here $z$ stands for the restriction 
to $\P^m$ of the generator $z\in H^1(\P^\infty)$. The reader should keep 
in mind that this notation has a different use than that intended 
in~(\ref{DN}), but the context clarifies any possible confusion. For instance,
an additive $\F2$-basis for $N$ is 
given by the monomials 
\begin{equation}\label{addbas}
z^i\otimes z^k+z^k\otimes z^i,\quad 0\leq i<k\leq m,
\end{equation}
whereas an $\F2$-basis for the first summand on the right-hand-side 
of~(\ref{DN}) is given by the elements $z^i\otimes\lambda^j=
z^i\otimes z^j\otimes z^j$, with $i,j\geq0$ and $j\leq m$.

\medskip
Consider the polynomial expressions 
$Q_i=Q_i(\lambda,\eta)\in N$ defined by the relation
\begin{equation}\label{eiqi}
\eta^i+Q_i=1\otimes z^i+z^i\otimes1,\quad i\geq1,
\end{equation}
where $\eta=1\otimes z+z\otimes1$,
so that the basis in~(\ref{addbas}) takes the form 
$\lambda^i(\eta^{k-i}+Q_{k-i})$. An upper triangular matrix 
then changes~(\ref{addbas}) to the basis $\lambda^i\eta^{k-i}$ for $N$.
This shows 
\begin{equation}\label{Borelproduct}
H^*(S^\infty\times_{\Z2}(\P^m)^2)\approx\left.\rule{0mm}{3.8mm}
\F2[\zeta,\lambda,\eta]\right/(\zeta\eta,R_0,\ldots,
R_{m+1})
\end{equation}
where $\zeta=z\otimes1\otimes1$ and 
$R_i=\lambda^{m+1-i}(\eta^i+Q_i)$ (setting $Q_0=0$).
Theorem~\ref{haefligerstheorem} and Remark~\ref{structure}
then yield the ring isomorphism
\begin{equation}\label{mo2cohoB}
H^*(B(\P^m,2))\approx\left.\rule{0mm}{3.8mm}
\F2[\zeta,\lambda,\eta]\,\right/I_m
\end{equation}
where $I_m$ is the ideal 
generated by $\zeta\eta$, the $R_i$ ($0\leq i\leq m+1$),
and the $\mu(1\otimes z^k)$ ($0\leq k\leq m$). 

\begin{nota}\label{gennotmin}{\em
The given set of generators for $I_m$
is not minimal: take $m=2$, then
(\ref{primera})--(\ref{tercera}) below show that $\mu(1\otimes z^2)$ and
all the generators $R_i$ are redundant, so that
$I_2=(\zeta\eta,\lambda+\zeta^2+\eta^2,\zeta\lambda+\eta\lambda)$.
This leads to~(\ref{cohop2}) below, after eliminating the variable $\lambda$.
}\end{nota}

In order to make~(\ref{mo2cohoB}) into
an explicit expression, one would need to know the
$Q_i$ and the $\mu(1\otimes z^k)$ as polynomials in the variables
$\zeta,\lambda,\eta$. The former set of polynomials
depends only on $\lambda$ and 
$\eta$ and, in fact,~(\ref{eiqi}) can be used to get the inductive formula
\begin{equation}\label{Qi}
Q_i=\sum^{\left[\frac{i-1}{2}\right]}_{k\geq1}\binom{i}{k}\lambda^k\left(
\rule{0mm}{3.8mm}\eta^{i-2k}+Q_{i-2k}\right).
\end{equation}
Here $\left[\frac{i-1}{2}\right]$ stands for the integral part of $(i-1)/2$.
For instance: $Q_0=Q_1=Q_2=Q_4=Q_8=0$, $\,Q_3=\lambda\eta$, $\,Q_5
=\lambda\eta^3+\lambda^2\eta$, $\,Q_6=\lambda^2\eta^2$, and $Q_7
=\lambda\eta^5+\lambda^3\eta$.
As for the polynomials $\mu(1\otimes z^k)$, Theorem~\ref{haefligerstheorem}
can be used, in principle, to get non-inductive expressions 
for these elements as soon as one knows the three 
maps $\Delta_{!}$, $\Delta^*$, and $\varphi$ in the case $M=\P^m$ 
(expressions for $r_1$ and $r_2$ are simple; the latter, for instance,
is the identity on $D$ and $N$, but vanishes on any $z$-multiple). $\Delta_{!}$
is determined by~(\ref{sorpresa}) and Theorem~\ref{diagonalclass}
which yields 
\begin{equation}\label{deltachica}
\delta=z^m\otimes1+z^{m-1}\otimes z+\cdots+z\otimes z^{m-1}+1\otimes z^m.
\end{equation}
Expression~(\ref{multiplication}) can be written down
in a very compact form: since $W_{\P^m}=(1+z)^{m+1}$ and multiplication
by $z\otimes 1$ is injective in $H^*(\P^\infty\times\P^m)$, $\varphi$ 
is multiplication by 
\begin{equation}\label{compact}
\frac{\;\left(1\otimes z+z\otimes1\right)^{m+1}}{z\otimes1}
\end{equation}
---which is well defined since $0=1\otimes z^{m+1}\in H^*(\P^\infty\times\P^m)$.
Likewise,~(\ref{cohodelta}) takes the compact form
\begin{equation}\label{squares}
\Delta^*(z^k\otimes z^k)=\left(1\otimes z^k\right)
\left(1\otimes z+z\otimes1\right)^k.
\end{equation}

Although the above information suffices to perform explicit computations,
details soon get combinatorially complex as $m$ increases. Thus, 
after the following technical lemma (needed in connection with 
Project~\ref{problema1}), only a few 
complete examples will be analyzed (for $m\leq3$). In addition,
the final part of this section offers a description of the 
height of the first Stiefel-Whitney class of the bundle 
$\zeta_m$ in Section~\ref{secprojuno} for some families of $m$, 
and its relation, in terms of Project~\ref{problema1}, 
to the lower bounds in Table~\ref{tabla}.

\begin{lema}[Compare to~{\cite[page~278]{bausum}}]\label{classifying}
For a closed smooth $m$-dimensional manifold $M$,
the classifying map $B(M,2)\to\P^\infty$ for the principal $\Z2$-bundle 
$F(M,2)\to B(M,2)$
corresponds to the $\F2$-cohomology class $\zeta=j^*(z\otimes1\otimes1)$.
\end{lema}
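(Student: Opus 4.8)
The plan is to unwind the definitions and exhibit the classifying map of the double cover $F(M,2)\to B(M,2)$ as a concrete map into $\P^\infty=B\Z2$, then chase the resulting universal class through the identifications already set up in the section. First I would recall that, since $\Z2$ acts freely on $F(M,2)$, the Borel construction $S^\infty\times_{\Z2}F(M,2)$ is homotopy equivalent to the orbit space $B(M,2)$, and the projection onto the first factor $S^\infty\times_{\Z2}F(M,2)\to S^\infty/\Z2=\P^\infty$ is, by construction, a model for the classifying map of the principal $\Z2$-bundle $F(M,2)\to B(M,2)$. Thus the class we want is the pullback, along this projection, of the generator $z\in H^1(\P^\infty;\F2)$.

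Next I would compare this with the map $j\colon B(M,2)\simeq S^\infty\times_{\Z2}F(M,2)\hookrightarrow S^\infty\times_{\Z2}M^2$ introduced right before Theorem~\ref{haefligerstheorem}. The key point is that the two projections to $\P^\infty$ (from $S^\infty\times_{\Z2}F(M,2)$ and from $S^\infty\times_{\Z2}M^2$) are strictly compatible with $j$, because $j$ is induced by the $\Z2$-equivariant inclusion $F(M,2)\hookrightarrow M^2$ and hence commutes with passing to the $S^\infty\times_{\Z2}(-)$ construction over $\P^\infty$. Therefore the class classifying the double cover is $j^*$ applied to the pullback of $z$ along the projection $S^\infty\times_{\Z2}M^2\to\P^\infty$ appearing in the fibration~(\ref{borelfib}). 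By the discussion following~(\ref{DN}), that pulled-back class is precisely $z=z\otimes1\otimes1$ in the notation of~(\ref{DN}); matching it with the notation $\zeta=z\otimes1\otimes1$ fixed just before~(\ref{Borelproduct}), we get the class $\zeta=j^*(z\otimes1\otimes1)$, as claimed. (Strictly, one should also note that $H^1(\P^\infty;\F2)\to H^1(B(M,2);\F2)$ sends $z$ to this same element, since a map to $\P^\infty$ is classified up to homotopy by the pullback of $z$.)

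I do not expect a serious obstacle here; the lemma is essentially bookkeeping. The one place to be careful is the identification of the "projection" class in~(\ref{borelfib}) with the symbol $z=z\otimes1\otimes1$ used in~(\ref{DN}) versus the symbol $\zeta=z\otimes1\otimes1$ used from~(\ref{Borelproduct}) onward --- the section deliberately reuses the letter $z$ for the restriction of the $\P^\infty$-generator to $\P^m$, so I would spell out explicitly which $z$ is meant at each step to avoid the very confusion the text warns about. A secondary point worth a sentence is the naturality of the homotopy equivalence $B(M,2)\simeq S^\infty\times_{\Z2}F(M,2)$ over $\P^\infty$: one should observe that this equivalence is the canonical quotient $S^\infty\times_{\Z2}F(M,2)\to F(M,2)/\Z2$, which is visibly a map over $\P^\infty$ once one remembers that $F(M,2)/\Z2$ carries, as a space with a double cover, the tautological classifying map to $\P^\infty$. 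With these two remarks in place, the diagram chase is immediate and the proof is complete.
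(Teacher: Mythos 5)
Your proof is correct and follows essentially the same route as the paper: identify the classifying map with the first-factor projection of the Borel construction, observe that $j$ is compatible with the projections to $\P^\infty$, and chase the universal class $z$ through the resulting commutative diagram to obtain $\zeta=j^*(z\otimes1\otimes1)$. The paper simply records the argument as a single commutative diagram (with the right-hand square a pullback) rather than in prose, but the content is identical.
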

\begin{proof}
This follows from the commutative diagram 

\begin{picture}(0,50)(-36,-5)
\put(0,32){$F(M,2)$}
\put(53,32){$S^\infty\times F(M,2)$}
\put(133,32){$S^\infty\times M^2$}
\put(195,32){$S^\infty$}
\put(0,0){$B(M,2)$}
\put(49,0){$S^\infty\times_{\Z2} F(M,2)$}
\put(128.7,0){$S^\infty\times_{\Z2} M^2$}
\put(195,0){$\P^\infty$}
\put(13,27){\vector(0,-1){15}}
\put(76,27){\vector(0,-1){15}}
\put(150,27){\vector(0,-1){15}}
\put(198,27){\vector(0,-1){15}}
\put(49,34){\vector(-1,0){17}}
\put(107.5,34){\vector(1,0){22}}
\put(170,34){\vector(1,0){22}}
\put(46,2){\vector(-1,0){14}}
\put(110,2){\vector(1,0){16}}
\put(173,2){\vector(1,0){18}}
\end{picture}

\noindent where left-hand-side horizontal maps are homotopy equivalences, 
middle horizontal maps are inclusions, and the right-hand-side square
is a pull-back with horizontal maps projecting to the first 
coordinate.
\end{proof}

\begin{ejemplos}\label{ejemp2}{\em
Take $M=\P^2$, the projective plane,
so that $W_{\P^2}=1+z+z^2$. Thus both~(\ref{deltachica}) 
and~(\ref{compact}) are given by $z^2\otimes 1+z\otimes z+1\otimes z^2$.
A direct calculation using~(\ref{squares}) then shows that the 
$\F2[z]$-monomorphism $\mu$
in Theorem~\ref{haefligerstheorem} is determined by
\begin{align}
\mu(1\otimes1)\,\,&=\;
z^2\otimes1\otimes1+1\otimes z\otimes z+(z^2\otimes1+1\otimes z^2)\,\;=\;\,
\zeta^2+\lambda+\eta^2;
\label{primera}\\
\mu(1\otimes z)\,\,&=\;
z\otimes z\otimes z+(z^2\otimes z+z\otimes z^2)\,\;=\;\,
\zeta\lambda+\lambda\eta; \label{segunda}\\
\mu(1\otimes z^2)& =\; 1\otimes z^2\otimes z^2\;\,=\;\,
\lambda^2; \label{tercera}
\end{align}
(observe that $\dim(\lambda)=2$ and $\dim(\zeta)=\dim(\eta)=1$) 
and, after a little algebraic manipulation,~(\ref{mo2cohoB}) becomes
\begin{equation}\label{cohop2}
H^*(B(\P^2,2))=\left.\rule{0mm}{3.8mm}
\F2[\zeta,\eta]\,\right/(\zeta\eta,\zeta^3+\eta^3).
\end{equation}
In a similar manner one derives
$$H^*(B(\P^3,2))=\left.\rule{0mm}{3.8mm}
\F2[\zeta,\lambda,\eta]\,\right/(\zeta\eta,
\lambda^3,\zeta^3+\eta^3,\zeta\lambda^2+\lambda^2\eta,
\zeta^2\lambda+\lambda^2+\lambda\eta^2).$$
Table~\ref{T2} gives an explicit additive $\F2$-basis for this algebra,
whereas Remark~\ref{steenrodND} gives the formulas $\Sq^1(\zeta)=\zeta^2$,
$\Sq^1(\eta)=\eta^2$, and $\Sq^1(\lambda)=\lambda(\zeta+\eta)$ (the first
two are forced by dimensional reasons). In Section~\ref{secexa} we will
need to use some information about 
the $\Sq^1$-cohomology of $H^*(B(\P^3,2))$. A straightforward calculation
shows this to be given by:
\begin{itemize}
\item $\mathbb{Z}_2$, in dimensions $0$ and $4$ (represented by $1$ and 
$\lambda(\zeta^2+\eta^2)$, respectively);
\item $\mathbb{Z}_2\oplus\mathbb{Z}_2$, in dimension $3$ (represented by 
$\zeta^3$ and $\lambda\zeta$);
\item $0$, in any other dimension.
\end{itemize}
\begin{table}[h]
\centerline{
\begin{tabular}{|c|c|c|c|c|c|c|}\hline
{\it basis}$\rule{0mm}{4mm}$&$1$&$\zeta$, $\eta$&$\zeta^2$, $\eta^2$,
$\lambda$&$\zeta^3$, $\lambda\zeta$, $\lambda\eta$&$\lambda\zeta^2$, 
$\lambda\eta^2$&$\lambda\zeta^3$\\ \hline
{\it dimension}$\rule{0mm}{4mm}$&$0$&$1$&$2$&$3$&$4$&$5$\\ \hline
\end{tabular}}
\caption{Basis elements in $H^*(B(\P^3,2))$\label{T2}}
\end{table}
}\end{ejemplos}

\begin{nota}\label{cruzada}{\em
The relations $\eta^4=0$ and $\zeta^4=0$ clearly hold in~(\ref{cohop2}),
but neither $\eta^3$ nor $\zeta^3$ vanishes. Of particular interest is 
the non-triviality of the last element since it implies, 
from Lemma~\ref{classifying}, that the classifying map for $\zeta_2$
cannot be deformed into a map $B(\P^2,2)\to\P^2$, and therefore, 
as described in Section~\ref{secprojuno},  
$\TC^S(\P^2)\ge4$ (which is in fact an equality, as indicated 
in Table~{\ref{tabla}}). This approach can be tried for larger-dimensional
projective spaces (details below), but the lower bounds 
thus obtained do not improve on (but, for $m\leq8$, coincide with) those 
in~\cite{symmotion}. (The situation is comparable with that observed 
in the first complete paragraph in page 126 of~\cite{handel}.) For instance, 
although $\zeta^4=0$ is clearly a  relation in $H^*(B(\P^3,2))$,
$\zeta^3$ does not vanish in this ring (its restriction to $H^*(B(\P^2,2))$
is nontrivial). Thus $\TC^S(\P^3)\ge4$ is all one can 
deduce from this $\F2$-approach. But much of the motivation 
for Project~\ref{problema1} comes from the fact that, 
by replacing $\F2$-cohomology with integral cohomology, the above ideas 
allow us to get, in Section~\ref{secexa},
the improved $\TC^S(\P^3)\geq5$, a sharp  
inequality in view of Remark~\ref{obstr1} and the known $\emb(\P^3)=5$.
(Remark~\ref{razon} pinpoints the reason why the $\F2$-approach fails.)
}\end{nota}

The rest of this section is devoted to describing the height of 
$\zeta\in H^*(B(\P^m,2))$ for some families of values of $m$, and to indicating 
the way this compares to the lower bounds in Table~\ref{tabla}.

\medskip\noindent {\bf Case $m=1$:} One gets
$R_0=\lambda^2$, $R_1=\lambda\eta$, $R_2=\eta^2$, $\mu(1\otimes1)=\zeta+\eta$,
and $\mu(z\otimes z)=\lambda$. Thus $I_1$ reduces to the ideal generated
by $\zeta\eta$, $\eta^2$, $\zeta+\eta$, and $\lambda$, and $H^*(B(\P^1,2))
\approx\F2[\zeta]/\zeta^2$ which, of course, is compatible with the fact that
$B(\P^1,2)\simeq S^1$. Under these conditions, 
the Hopf-type $\F2$-approach in Section~\ref{secprojuno} gives
$\TC^S(\P^1)\geq2$---optimal in view of Table~\ref{tabla}.

\medskip\noindent {\bf Case $m=2^e$:} As a partial generalization of 
the previous case, it is now affirmed that $0\neq\zeta^{2^{e+1}-1}
\in H^*(B(\P^{2^e},2))$, so that the Hopf-type $\F2$-approach in 
Section~\ref{secprojuno} gives $\TC^S(\P^{2^e})\geq2^{e+1}$---which is optimal
for $e\leq3$ in view of Table~\ref{tabla},
and for $e\geq4$ in view of~\cite{dontables,symmotion}. Indeed, in the notation
of Theorem~\ref{haefligerstheorem} one has $\Delta^*(z^{2^{e+1}-1}\otimes1
\otimes1)=z^{2^{e+1}-1}\otimes1$. But an easy calculation shows that
the preimage of this element under $\varphi$ is 
$$ 
z^{2^e-1}\otimes1+z^{2^e-2}\otimes z+\cdots+z\otimes z^{2^e-2}+1\otimes z^{2^e-1}.
$$ 
However, the last element maps nontrivially under $r_1$,
while $r_2(z^{2^{e+1}-1}\otimes1\otimes1)=0$.

\medskip\noindent {\bf Case $m=2^e+\varepsilon$ with $\varepsilon\in\{1,2\}$
and $e\geq2$:} 
The previous analysis implies $0\neq\zeta^{2^{e+1}-1}
\in H^*(B(\P^{2^e+\varepsilon},2))$. It is now affirmed that $0=\zeta^{2^{e+1}}
\in H^*(B(\P^{2^e+\varepsilon},2))$, so that the best information one gets from
the Hopf-type $\F2$-approach in 
Section~\ref{secprojuno} is $\TC^S(\P^{2^e+\varepsilon})\geq2^{e+1}$. 
Indeed, it is enough to consider the case $\varepsilon=2$,
where a new calculation gives that the $\varphi$-preimage of
$\Delta^*(z^{2^{e+1}}\otimes1\otimes1)=z^{2^{e+1}}\otimes1$ is 
$$
\sum^{2^{e-2}-1}_{k\geq0}\left(z^{2^e-4k-2}\otimes 
z^{4k}+z^{2^e-4k-3}\otimes z^{4k+1}\right).
$$
The last element maps trivially under $r_1$, so that it is in fact
the $\mu$-preimage of $z^{2^{e+1}}\otimes1\otimes1$, killing 
$\zeta^{2^{e+1}}\in H^*(B(\P^{2^e+\varepsilon},2))$.

\begin{nota}\label{suerte}{\em
Since $\TC^S(\P^a)\geq\TC^S(\P^b)$ is obvious for $a\geq b$,
the inequality $\TC^S(\P^{2^e+\varepsilon})\geq2^{e+1}$ (for $\varepsilon\in
\{1,2\}$) follows directly from the previously established 
$\TC^S(\P^{2^e})\geq2^{e+1}$. But unlike the latter, the former 
is actually {\em not\,} optimal for $e\geq3$: 
Table~\ref{tabla} gives $\TC^S(\P^9)=\TC^S(\P^{10})=17$, 
whereas $\TC^S(\P^{2^{e}+\varepsilon})=2^{e+1}+1$
for $e\geq4$ in view of~\cite{dontables,symmotion}.
In view of its success for $(e,\varepsilon)=(1,1)$,
there seems to be a ``good'' chance that
the ideas in Project~\ref{problema1} can be used to
settle the still unresolved cases with $e=2$ ($\varepsilon=1,2$). Of course, 
this would settle the value of $\TC^S(\P^m)$ for $m\le6$ and, on the other 
hand, it would show the equality $\TC^S(\P^m)=\emb(\P^m)$ for $m\leq 5$. 
}\end{nota}

\noindent {\bf Case $m=2^e+3$:} Just as in previous situations,
\begin{equation}\label{panconlomismo}
0=\zeta^{2^{e+1}}\in H^*(B(\P^{2^e+3},2)),
\end{equation}
so that the best information one gets from the Hopf-type $\F2$-approach in 
Section~\ref{secprojuno} is again 
$\TC^S(\P^{2^e+3})\geq2^{e+1}\,$---besides having one further
illustration of Haefliger's method, the 
reason for not including this case with the previous one is that
there is currently no clear evidence as to what
the actual value of $\TC^S(\P^{2^e+3})$ could be. 
To show~(\ref{panconlomismo}), this time 
one computes that the $\varphi$-preimage of
$\Delta^*(z^{2^{e+1}}\otimes1\otimes1)=z^{2^{e+1}}\otimes1$ is 
$$
\sum^{2^{e-2}-1}_{k\geq0}\left(z^{2^e-4k-3}\otimes z^{4k}\right).
$$
The last element maps trivially under $r_1$, so that it is in fact
the $\mu$-preimage of $z^{2^{e+1}}\otimes1\otimes1$, once again killing 
$\zeta^{2^{e+1}}\in H^*(B(\P^{2^e+3},2))$.


\section{Proofs of Theorems~\ref{vendimia} and~\ref{lacohofinal}}\label{secexa}
Unless otherwise noted, throughout this section $H^*(X)$ stands for 
the singular cohomology groups (or algebra, depending on the context) 
of a space $X$, where integral coefficients are used. 

\smallskip
It has been observed, at the end of Remark~\ref{cruzada}, that
in order to settle $\TC^S(\P^3)=5$, it is enough to
prove $\TC^S(\P^3)\geq5$. This inequality is established
in the present section within the setup in Project~\ref{problema1}. Indeed, 
as explained in Section~\ref{secprojuno},
the goal is to show that the classifying map for $\zeta_3$ cannot be
compressed into a map $$B(\P^3,2)\to\P^3.$$
Since $H^*(\P^\infty)=\mathbb{Z}[\omega]/2\omega$,
$\omega\in H^2(\P^\infty)$, with $\omega^2$ trivial on $\P^3$, the required 
conclusion can be stated as:

\begin{teorema}\label{notrivialidad}
$\omega^2$ maps non-trivially under 
the classifying map for $\zeta_3$. 
\end{teorema}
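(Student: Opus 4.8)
The plan is to use the integral analogue of Haefliger's diagram. The classifying map for $\zeta_3$ is, by Lemma~\ref{classifying} (applied now with integral coefficients), detected by the integral class $\zeta_{\mathbb{Z}}=j^*(z\otimes1\otimes1)\in H^2(B(\P^3,2))$ corresponding to the generator $\omega\in H^2(\P^\infty)$, so Theorem~\ref{notrivialidad} amounts to showing that $\zeta_{\mathbb{Z}}^2\ne0$ in $H^4(B(\P^3,2))$. First I would assemble the additive structure of $H^*(B(\P^3,2);\mathbb{Z})$: one knows $H^*(B(\P^3,2);\F2)$ from Examples~\ref{ejemp2} and Table~\ref{T2}, together with the action of $\Sq^1$ recorded there, so running the Bockstein spectral sequence (the approach Fred Cohen suggested) identifies which $\F2$-classes are reductions of free or torsion integral classes and which support higher Bocksteins. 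The $\Sq^1$-cohomology computed in Examples~\ref{ejemp2}—namely $\mathbb{Z}_2$ in degrees $0,4$ and $\mathbb{Z}_2\oplus\mathbb{Z}_2$ in degree $3$—tells us that the mod~$2$ Bockstein $E_2$-page is concentrated in those degrees, and a further analysis of $\Sq^i$'s and of the Cartan--Leray--Serre spectral sequence for $S^\infty\times_{\Z2}(\P^3)^2$ (as in Section~\ref{secprojuno}'s strategy, using~(\ref{finito}) to cap the range at $2m-1=5$) should pin down the higher differentials, yielding the list in Theorem~\ref{lacohofinal}: in particular $H^4(B(\P^3,2))=\mathbb{Z}/4$.

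Next I would locate $\zeta_{\mathbb{Z}}$ and its square inside this picture. Since $\zeta_{\mathbb{Z}}$ reduces mod~$2$ to the class $\zeta\in H^2(B(\P^3,2);\F2)$, and since $\Sq^1\zeta=\zeta^2$ is nonzero, $\zeta$ is not the reduction of a free integral class but of a $2$-torsion class; so $\zeta_{\mathbb{Z}}$ has order~$2$ (or~$4$) in $H^2=\mathbb{Z}/2\oplus\mathbb{Z}/2$, forcing order exactly~$2$. The crucial point is then that $\zeta_{\mathbb{Z}}^2$ reduces mod~$2$ to $\zeta^2\ne0$ in $H^4(B(\P^3,2);\F2)$ (recall from Remark~\ref{cruzada} that $\zeta^3\ne0$, hence a fortiori $\zeta^2\ne0$), so $\zeta_{\mathbb{Z}}^2$ cannot be zero. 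Equivalently, in the $\mathbb{Z}/4$ group $H^4(B(\P^3,2))$, the class $\zeta_{\mathbb{Z}}^2$ is detected by mod~$2$ reduction and is therefore the (unique) element of order~$2$—which is precisely the second assertion of Theorem~\ref{lacohofinal}, identifying it with the pullback of $\omega^2$. This is the step that makes Project~\ref{problema1} succeed where the pure $\F2$-approach of Remark~\ref{cruzada} fails: over $\F2$ one only sees $\zeta^2$ as a class in a big polynomial quotient and gets no obstruction at $n=4$, whereas integrally the nontriviality of the \emph{order-four} group $H^4$ together with the surviving mod~$2$ reduction produces the obstruction.

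The main obstacle I expect is the determination of $H^*(B(\P^3,2);\mathbb{Z})$ itself—in particular proving $H^4=\mathbb{Z}/4$ rather than $\mathbb{Z}/2\oplus\mathbb{Z}/2$, since both have the same mod~$2$ reduction and the same $\Sq^1$-behaviour. Settling this requires either a careful second-order Bockstein ($\beta_2$) computation or an explicit analysis of the integral Cartan--Leray--Serre spectral sequence of~(\ref{borelfib}) for $M=\P^3$, tracking how the local-coefficient contributions (the summand $N$ versus the diagonal summand $D$ in the mod~$2$ model) assemble integrally; the class $\lambda=z\otimes z$ and its interaction with $\zeta_{\mathbb{Z}}$ via the relations in~(\ref{mo2cohoB}) is where the $4$-torsion must come from. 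Once $H^4\cong\mathbb{Z}/4$ is in hand, the identification of $\zeta_{\mathbb{Z}}^2$ with its order-two element—and hence Theorem~\ref{notrivialidad}, and with it $\TC^S(\P^3)\ge5$ and Theorem~\ref{vendimia}—follows formally from the mod~$2$ reduction argument above.
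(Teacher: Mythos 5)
Your proposal correctly identifies the target computation ($H^4(B(\P^3,2))\cong\mathbb{Z}/4$, and $\omega^2$ pulling back to its order-two element), and your plan of combining the $\Sq^1$-cohomology data from Examples~\ref{ejemp2} with a Bockstein and/or Cartan--Leray--Serre spectral sequence is broadly the paper's strategy. However, there is a genuine gap in the step that is supposed to finish the argument.

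You write that ``$\zeta_{\mathbb{Z}}^2$ reduces mod~$2$ to $\zeta^2\ne0$ in $H^4(B(\P^3,2);\F2)$'' and conclude from this that $\zeta_{\mathbb{Z}}^2\ne0$. This is a degree bookkeeping error that destroys the mechanism. The class $\zeta$ detected by Lemma~\ref{classifying} lives in $H^1(B(\P^3,2);\F2)$, not $H^2$, while $\omega\in H^2(\P^\infty;\mathbb{Z})$ reduces mod~$2$ to $z^2\in H^2(\P^\infty;\F2)$. Hence $\zeta_{\mathbb{Z}}=\widehat{\zeta}_3^*(\omega)$ reduces mod~$2$ to $\zeta^2$ (degree $2$), and $\zeta_{\mathbb{Z}}^2$ reduces to $\zeta^4$ (degree $4$). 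But $\zeta^4=0$ in $H^*(B(\P^3,2);\F2)$, as pointed out in Remark~\ref{cruzada}. In fact Remark~\ref{razon} stresses precisely this: $\widehat{\zeta}_3^*(\omega^2)$ is $2$-divisible, so its mod~$2$ reduction vanishes; this is exactly why the $\F2$-approach is useless here and integral coefficients are essential. The unique element of order two in $\mathbb{Z}/4$ has \emph{trivial} mod~$2$ reduction, so ``detected by mod~$2$ reduction'' is self-contradictory as a description of it, and the claim that $\zeta_{\mathbb{Z}}^2\ne0$ cannot be seen this way.

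The paper closes the gap differently: in the Cartan--Leray--Serre spectral sequence $H^p(\P^\infty;H^q(F(\P^3,2)))\Rightarrow H^{p+q}(B(\P^3,2))$, the class $\omega^2$ lives at node $(4,0)$, i.e.\ in the highest (base) filtration. The $\Sq^1$-cohomology computation forces both $\mathbb{Z}/2$'s in total degree $4$ to survive to $E_\infty$ and to assemble into the nontrivial extension $\mathbb{Z}/4$, in which the image of $E_\infty^{4,0}$ is the subgroup of order two. Thus $\omega^2$ is detected by its spectral-sequence filtration (it represents the bottom step of the extension), not by its mod~$2$ reduction. If you want to salvage your route, you would need to replace the reduction argument by this filtration argument, or equivalently argue via the integral (not mod~$2$) $E_\infty$-page that the permanent cycle $\omega^2$ cannot vanish in $H^4$.
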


\begin{nota}\label{razon}{\em
Let $\widehat{\zeta_3}\colon B(\P^3,2)\to\P^\infty$ stand for the map in
Theorem~\ref{notrivialidad}.
It will become clear that $\widehat{\zeta^*_3}(\omega^2)$ is 2-divisible.
This is the reason why cohomology with mod 2 coefficients is not
useful for proving the required inequality $\TC^S(\P^3)\geq5$.
}\end{nota}

The proof of Theorem~\ref{notrivialidad}
is based on the Cartan-Leray-Serre spectral sequence 
(with integral coefficients)
for the $\Z2$-cover associated to $\zeta_3$. Much of this section is
devoted to giving full details of that spectral sequence. 

\smallskip
Recall that the fiber of 
$\widehat{\zeta}_3$ is the ordered configuration space $F(\P^3,2)$ with
($\Z2=\pi_1(\P^\infty)$)-action given by the involution 
$(u,v)\stackrel{t}\mapsto(v,u)$.
Thus, the spectral sequence to be used has 
\begin{equation}\label{CLSSS}
E_2^{p,q}=H^p(\P^\infty;H^q(F(\P^3,2)))\Longrightarrow H^{p+q}(B(\P^3,2)).
\end{equation}
Cohomology coefficients in this $E_2$-term 
are twisted by (the map induced by) $t$. 
A sound hold on~(\ref{CLSSS}) comes from the homeomorphism 
$\homeo\cong F(\P^3,2)$ given by $(a,b)\mapsto(a,ab)$ with inverse 
$(x,y)\mapsto(x,x^{-1}y)$, where $e$ is the identity matrix in
$\P^3=\mathrm{SO}(3)$, and inverses are taken with respect to the
group structure. In these terms, the resulting involution 
$\tau\colon\homeo\to\homeo$ takes the form $\tau(a,b)=(ab,b^{-1})$.

\smallskip
Recall $H^*(\P^3)=\mathbb{Z}[x,y]\,/\left(x^2,y^2,xy,2x\right)$
and $H^*(\fade)=\mathbb{Z}[x]\,/\left(x^2,2x\right)$, for cohomology
classes $x$ and $y$ of dimensions 2 and 3, respectively.
The K\"unneth isomorphism 
$H^*(X\times Y)\approx \left[H^*(X)\otimes H^*(Y)\right]^*\oplus
\left[\mathrm{Tor}\left(H^*(X),H^*(Y)\right)\right]^{*+1}$
yields:

\begin{lema}\label{cohoZ}
$H^*(\homeo)$ is the direct sum of 
$$\mathbb{Z}[x_1,y_1,x_2]\,\left/\left(x_1^2,x_2^2,y_1^2,x_1y_1,
2x_1,2x_2\right)\right.$$ and a copy of $\,\mathbb{Z}/2$ generated 
by a class $z\in H^3(\homeo)$.
\end{lema}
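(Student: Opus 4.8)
The plan is to apply the displayed K\"unneth isomorphism directly, feeding in the two cohomology rings recalled just above: $H^*(\P^3)$ is $\mathbb{Z}$ in degree $0$, the order-two group generated by $x$ in degree $2$, a free $\mathbb{Z}$ generated by $y$ in degree $3$, and $0$ elsewhere; while $H^*(\fade)$ (which one may picture via the deformation retraction $\fade\simeq\P^2$ onto the $2$-skeleton) is $\mathbb{Z}$ in degree $0$, the order-two group generated by $x$ in degree $2$, and $0$ elsewhere. First I would compute the tensor summand $\left[H^*(\P^3)\otimes H^*(\fade)\right]^*$ degree by degree. Writing $x_1=x\otimes1$, $y_1=y\otimes1$ and $x_2=1\otimes x$, one gets $\mathbb{Z}$ in degree $0$; $\mathbb{Z}/2\langle x_1\rangle\oplus\mathbb{Z}/2\langle x_2\rangle$ in degree $2$; $\mathbb{Z}\langle y_1\rangle$ in degree $3$; $\mathbb{Z}/2\langle x_1x_2\rangle$ in degree $4$; $\mathbb{Z}/2\langle y_1x_2\rangle$ in degree $5$; and $0$ in every other degree. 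Since the cross product $H^*(\P^3)\otimes H^*(\fade)\hookrightarrow H^*(\homeo)$ is a ring monomorphism, this summand is a subring, and reading off its relations ($x_1^2=x_2^2=y_1^2=x_1y_1=0$ for dimension reasons, together with $2x_1=2x_2=0$) identifies it with the quotient of $\mathbb{Z}[x_1,y_1,x_2]$ appearing in the statement.

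Next I would compute the Tor summand. Because $\mathrm{Tor}$ vanishes whenever one of its arguments is torsion-free, the only nonzero contribution is $\mathrm{Tor}\!\left(H^2(\P^3),H^2(\fade)\right)=\mathrm{Tor}(\mathbb{Z}/2,\mathbb{Z}/2)\cong\mathbb{Z}/2$, occurring in total degree $p+q=4$. The shift $(\ast+1)$ in the K\"unneth formula then places this $\mathbb{Z}/2$ in $H^3(\homeo)$; let $z$ be a generator of it. Assembling the tensor and Tor parts yields exactly the claimed direct-sum decomposition, the extra $\mathbb{Z}/2$ sitting in degree $3$ alongside $y_1$.

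There is no real obstacle here; the only points requiring care are (i) tracking the degree shift correctly, so that $z$ lands in $H^3$ and not $H^4$, and (ii) checking that the products among $x_1,y_1,x_2$ are precisely those listed---each vanishing product being forced, since any nonzero product would have to live in a degree where $H^*(\P^3)$ or $H^*(\fade)$ is zero---so that the presentation of the free summand is complete. (Alternatively one could first extract $H^*(\fade)$ from the long exact sequence of the pair $(\P^3,\fade)$ together with excision, but as both factor cohomologies have been recalled above, the K\"unneth route is the most economical.)
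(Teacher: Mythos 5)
Your proof is correct and follows essentially the same route as the paper: the paper simply records the integral cohomology of the two factors and applies the stated K\"unneth isomorphism, which is exactly what you do, degree by degree, with the Tor contribution landing in degree $3$ after the shift. Your extra care in verifying that the relations $x_1^2=x_2^2=y_1^2=x_1y_1=0$ are forced and that the cross product is a ring monomorphism is a reasonable expansion of the one-line argument the paper leaves implicit.
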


The index $i$ in $x_i$ and $y_i$ refers to the Cartesian factor where 
the indicated classes originate. The only class coming from the Tor part,
the class $z$,
arises from the two groups $H^2(\P^3)\approx H^2(\fade)\approx\mathbb{Z}/2$.
The whole multiplicative structure in $H^*(\homeo)$ 
is determined by specifying 
the four products $x_1z$, $x_2z$, $y_1z$, and $z^2$. Of these, the last 
two are trivial for dimensional reasons, whereas 
Example~\ref{elsegundo} below settles
the corresponding fact for $x_2z$. 
Although irrelevant for the calculations in this section, 
the author does not know whether 
$x_1z$ is trivial or not ($x_1z=y_1x_2$ would be forced in the latter case).

\medskip
The next step toward understanding~(\ref{CLSSS}) is to produce
a complete description of 
$\tau^*\colon H^*(\homeo)\to H^*(\homeo)$. 
This involution is easily seen to be trivial in the cases
$H^0(\homeo)=\mathbb{Z}$, $H^1(\homeo)=0$, $H^4(\homeo)\approx\mathbb{Z}/2$,
and $H^5(\homeo)\approx\mathbb{Z}/2$. The following result gives the answer 
in the two remaining cases.

\begin{proposicion}\label{twistedcoeffs}
In cohomology dimensions $2$ and $3$, the involution $\tau^*$ satisfies
\begin{equation}\label{cuatro}
\tau^*(x_1)=x_1+x_2,\quad\tau^*(x_2)=x_2,\quad\tau^*(y_1)=y_1+z,\quad 
\mbox{and}\quad\tau^*(z)=z.
\end{equation}
\end{proposicion}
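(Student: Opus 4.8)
The plan is to compute $\tau^*$ directly from the formula $\tau(a,b)=(ab,b^{-1})$ on $\homeo$ by factoring $\tau$ through maps whose effect on cohomology is already understood. First I would record the three obvious ingredients: (i) the projection $\pi_1,\pi_2\colon\homeo\to\P^3,\fade$ onto the two Cartesian factors, so that $x_1=\pi_1^*x$, $x_2=\pi_2^*x$, $y_1=\pi_1^*y$, and $z$ is the Tor-class detected by the pair of $\mathbb{Z}/2$'s in $H^2$; (ii) the inversion map $\iota\colon\fade\to\fade$, $b\mapsto b^{-1}$, which on $\mathrm{SO}(3)=\P^3$ is homotopic to the identity (it is the restriction of the linear map $A\mapsto A^{-1}=A^{t}$, or simply: inversion on a Lie group is $(-1)$ on $H_1$, hence on the mod-$2$ fundamental class $x$ it acts trivially since $x$ is $2$-torsion, so $\iota^*x=x$, $\iota^*y=y$); and (iii) the multiplication map $\rho\colon\P^3\times\P^3\to\P^3$, whose effect on cohomology is governed by the coproduct of the Hopf algebra $H^*(\mathrm{SO}(3);\mathbb{F}_2)=\mathbb{F}_2[x]/x^4$ — here I recall the standard fact that the reduced coproduct satisfies $\bar\psi(x)=0$, i.e. $\rho^*x=x\otimes1+1\otimes x$, while $\rho^*$ on the integral class $y\in H^3$ picks up $y\otimes1+1\otimes y$ plus a cross term living in $H^2\otimes H^1\oplus H^1\otimes H^2$, which vanishes since $H^1(\P^3)=0$, so $\rho^*y=y\otimes1+1\otimes y$ modulo the Tor contribution $x_1\otimes x_2$-type class.

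With these in hand the computation of $\tau^*$ on the polynomial generators is short: $\tau^*x_1=\tau^*\pi_1^*x=(\pi_1\tau)^*x$, and $\pi_1\tau$ is the composite $\homeo\xrightarrow{(a,b)\mapsto(a,b)}\P^3\times\P^3\xrightarrow{\rho}\P^3$ (after including $\fade\hookrightarrow\P^3$), so $\tau^*x_1=\rho^*x$ pulled back $=x_1+x_2$; similarly $\tau^*x_2=(\pi_2\tau)^*x=\iota^*x$ pulled back along $\pi_2$, giving $x_2$; and $\tau^*y_1=(\rho^*y)$ pulled back, which is $y_1+y_2$ up to Tor — but $y_2\in H^3(\fade)=0$, so only the Tor cross-term survives, and I must identify it with $z$. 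That identification, and the claim $\tau^*z=z$, are the real content: $z$ is by definition the class in the Tor summand generated by $H^2(\P^3;\mathbb{Z})\otimes H^2(\fade;\mathbb{Z})$, it is the unique nonzero element of $H^3$ not in the image of the (free part of the) Künneth map, hence it is characterized intrinsically and any self-map fixing the $\mathbb{Z}/2$-summand decomposition must fix it; and the cross-term in $\rho^*y$ is exactly the Bockstein-type Tor class pairing the two degree-$2$ torsion generators, which is $z$ by construction. So I would argue: $\tau^*$ preserves the subgroup of $2$-torsion elements and the filtration by the Künneth/Tor decomposition, $H^3(\homeo)\cong\mathbb{Z}/2\{y_1\}\oplus\mathbb{Z}/2\{z\}$, $\tau^*$ is the identity on the quotient (since it is the identity on $H^*(\P^3)$-factors after the homotopies above), and a direct chain-level or naturality-of-Tor argument pins the off-diagonal entry, giving $\tau^*y_1=y_1+z$ and $\tau^*z=z$.

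The main obstacle I anticipate is precisely the bookkeeping of the Tor class $z$ under $\rho^*$ and $\tau^*$: unlike the polynomial generators, $z$ is not represented by an external product of absolute classes, so one cannot simply chase it through $\pi_1,\pi_2$ — one needs either an explicit cochain representative (e.g. via the short exact sequence $0\to\mathbb{Z}\xrightarrow{2}\mathbb{Z}\to\mathbb{Z}/2\to0$ and the resulting Bockstein identifying $z$ with $\beta$ of a product of mod-$2$ classes $x_1x_2$), or the naturality of the Künneth short exact sequence applied to the three maps $\pi_1\tau$, $\pi_2\tau$, $\tau$ simultaneously. I would use the Bockstein route: over $\mathbb{F}_2$, $\tau^*$ is computed from the $\mathbb{F}_2$-coproduct of $\mathrm{SO}(3)$ and inversion, giving $\tau^*(x_1)=x_1+x_2$ and hence (by the Cartan formula and $\beta$-naturality) control over $\beta(x_1x_2)$; reconciling the integral lift with Lemma~\ref{cohoZ} then forces the stated formulas. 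A sanity check that $\tau^*$ is an involution ($\tau^{*2}=\mathrm{id}$) and is a ring map is automatic and provides a useful consistency test on the sign/cross-term bookkeeping; Example~\ref{elsegundo} (that $x_2z=0$) should also drop out as a corollary of this analysis.
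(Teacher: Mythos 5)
Your proposal is essentially correct and organized somewhat differently from the paper's proof, though both rest on the same key input. The paper's route is: (a) dispose of $\tau^*(z)=z$ immediately by observing $z$ is the unique element of order $2$ in $H^3(\homeo)\cong\mathbb{Z}\oplus\mathbb{Z}/2$; (b) determine the ``visible'' components of $\tau^*$ on $x_1,x_2,y_1$ by restricting along the two axial inclusions through a basepoint $d$ of order $2$, using that translations on a path-connected group are homotopic to the identity and that inversion is an isomorphism on $H^2(\fade)\cong\mathbb{Z}/2$ (this leaves $\tau^*(y_1)=y_1+\epsilon z$ with $\epsilon$ undetermined, since $z$ dies on axial restriction); and (c) pin $\epsilon=1$ by passing to $\mathbb{Z}/2$ coefficients via the Bockstein long exact sequence and comparing with $m^*(g^3)=g^3\otimes1+g^2\otimes g+g\otimes g^2+1\otimes g^3$. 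Your proposal instead factors $\pi_1\circ\tau$ and $\pi_2\circ\tau$ through the group multiplication $\rho$ and the inversion $\iota$, so that $\tau^*(x_1),\tau^*(x_2),\tau^*(y_1)$ are read off directly from $\rho^*$ and $\iota^*$; this avoids the two-step undetermined-$\epsilon$ argument, but you must then verify that the Tor component of $\rho^*y\in H^3(\P^3\times\P^3)$ is nonzero and that it restricts to $z$ under Künneth naturality — and that verification is, underneath, the very same mod-$2$ coproduct computation the paper invokes in step~(c). So the mathematical content is equivalent; what you buy is a more uniform presentation (everything passes through $\rho^*$), at the cost of a slightly more delicate naturality-of-Künneth argument for the Tor class. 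Two small points to tidy up: (i) you write $H^3(\homeo)\cong\mathbb{Z}/2\{y_1\}\oplus\mathbb{Z}/2\{z\}$, but $y_1$ generates a free $\mathbb{Z}$ summand, so it should be $\mathbb{Z}\{y_1\}\oplus\mathbb{Z}/2\{z\}$ — this actually makes your $\tau^*(z)=z$ argument cleaner, since $z$ is then the unique element of order $2$, exactly as the paper observes; (ii) your justification that $\iota^*$ fixes $H^2(\fade)$ conflates the $(-1)$-on-primitives statement with the $2$-torsion of $x$, when the cleanest argument is that $\iota$ is a homeomorphism, hence $\iota^*$ is an automorphism of $\mathbb{Z}/2$ and therefore the identity.
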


\begin{ejemplo}\label{elsegundo}{\em
The relation $x_2z=0$ follows from $x_1z=\tau^*(x_1z)=\tau^*(x_1)\tau^*(z)=
(x_1+x_2)z=x_1z+x_2z$.
}\end{ejemplo}
\begin{proof}
The formula $\tau^*(z)=z$ follows from the observation that $z$ 
is the unique element in $H^3(\homeo)\approx\mathbb{Z}\oplus\mathbb{Z}/2$ 
of order two. For the remaining 
cases consider the diagram

\begin{picture}(0,62)(-17,-8)
\put(0,0){$\fade$}
\put(15,37){$\P^3$}
\put(50,20){$\homeo$}
\put(27,12){\vector(3,1){17}}
\put(27,34){\vector(3,-1){17}}
\put(112,22){\vector(1,0){20}}
\put(121,24){\scriptsize $\tau$}
\put(137,20){$\homeo$}
\put(198,17){\vector(3,-1){17}}
\put(198,28){\vector(3,1){17}}
\put(220,37){$\P^3$}
\put(213,0){$\fade$}
\end{picture}

\noindent where right-hand-side diagonal maps are Cartesian projections, and 
left-hand-side diagonal maps are 
Cartesian inclusions with respect to some chosen base point 
$d\in\fade\subset\P^3=\mathrm{SO}(3)$ of order two 
(e.g., $d=\mathrm{Diag}(-1,-1,1)$, so that $d=d^{-1}$). 
The four resulting components are depicted in

\begin{picture}(0,78)(-4,5)
\put(-5,70){$a$}
\put(20,40){$(a,d)$}
\put(2,65){\vector(1,-1){15}}
\put(43,42){\vector(1,0){12}}
\put(60,40){$(a\cdot d,d)$}
\put(109,70){$a\cdot d$}
\put(111,10){$d$}
\put(92,33){\vector(1,-1){15}}
\put(92,50){\vector(1,1){15}}
\put(145,10){$b$}
\put(152,18){\vector(1,1){15}}
\put(170,40){$(d,b)$}
\put(193,42){\vector(1,0){12}}
\put(210,40){$(d\cdot b,b^{-1})$}
\put(259,70){$d\cdot b$}
\put(261,10){$b^{-1}$}
\put(242,33){\vector(1,-1){15}}
\put(242,50){\vector(1,1){15}}
\end{picture}

\noindent Since $\P^3$ is a path-connected group, the components
$\P^3\to\P^3$ and $\fade\to\P^3$ are homotopic to inclusions,
whereas the component $\fade\to\fade$ is necessarily the identity in
$H^2(\fade)\approx\mathbb{Z}/2$. This yields the first two formulas 
in~(\ref{cuatro}). However, since $z$ is not detected by axial inclusions,
all one gets for the third formula in~(\ref{cuatro}) is $\tau^*(y_1)=y_1
+\epsilon z$, for some $\epsilon\in\{0,1\}$. In order to settle 
this indeterminacy, consider the portion 
\begin{eqnarray}
H^3(\homeo)\stackrel2\to H^3(\homeo)\stackrel{\mathrm{proj}}\longrightarrow
H^3(\homeo;\mathbb{Z}/2)\nonumber\\
\stackrel\partial\to H^4(\homeo)\stackrel2\to H^4(\homeo)
\quad\quad\quad\label{portion}
\end{eqnarray}
of the long exact sequence associated to the extension
$0\to\mathbb{Z}\stackrel2\to\mathbb{Z}\stackrel{\mathrm{proj}}\longrightarrow
\mathbb{Z}/2\to0$.
Since $H^3(\homeo)\approx\mathbb{Z}\oplus\mathbb{Z}/2$ and $H^4(\homeo)
\approx\mathbb{Z}/2$, the middle part in~(\ref{portion}) becomes 
$$0\to\mathbb{Z}/2\oplus\mathbb{Z}/2
\stackrel{\mathrm{proj}'}\longrightarrow\mathbb{Z}/2\oplus\mathbb{Z}/2\oplus
\mathbb{Z}/2\stackrel{\partial}\longrightarrow
\mathbb{Z}/2\to0,$$ 
so that the value of $\epsilon$ can be set 
by looking at $\tau^*\colon H^3(\homeo;\mathbb{Z}/2)\to
H^3(\homeo;\mathbb{Z}/2)$. Indeed, $\epsilon=1$ is forced from the 
commutative diagram

\begin{picture}(0,52)(0,4)
\put(84,40){$\P^3\times\P^3$}
\put(69,10){$\homeo$}
\put(172,40){$\P^3$}
\put(149,10){$\homeo$}
\put(120,42){\vector(1,0){45}}
\put(130,12){\vector(1,0){14}}
\put(175,21){\vector(0,1){14}}
\put(98,21){\vector(0,1){14}}
\put(135,15){\scriptsize $\tau$}
\put(140,45){\scriptsize $m$}
\put(178.5,26){\scriptsize $\pi$}
\end{picture}

\noindent 
(where the vertical map on the left-hand-side is the obvious inclusion,
and $m$ is the multiplication in $\P^3=\mathrm{SO}(3)$)
and the well-known fact that $m^*(g^3)=g^3\otimes1+g^2\otimes g+
g\otimes g^2+1\otimes g^3$, where $g$ is the generator in 
$H^1(\P^3;\mathbb{Z}/2)$.
\end{proof}

The $E_2$-term in~(\ref{CLSSS}) can now be obtained from standard 
calculations (e.g.~\cite[page~6]{coho}). The result, recorded next, 
is depicted in the chart following Remark~\ref{nmodule}.

\begin{corolario}\label{e2}
\begin{itemize}
\item[{\em 1.}] $H^*(\P^\infty;H^0(\homeo))=\mathbb{Z}[\omega]/2\omega, 
\quad\dim(\omega)=2$.
\item[{\em 2.}] $H^*(\P^\infty;H^1(\homeo))=0$.
\item[{\em 3.}] $H^*(\P^\infty;H^2(\homeo))=\begin{cases}
\mathbb{Z}/2, & *=0;\\0, & *>0.\end{cases}$
\item[{\em 4.}] $H^*(\P^\infty;H^3(\homeo))=\begin{cases}
\mathbb{Z}\oplus\mathbb{Z}/2, & *=0;\\
\mathbb{Z}/2, & \mathrm{positive\,\, even\,\,}*;\\
0, & \mathrm{odd\,\,}*.\end{cases}$
\item[{\em 5.}] $H^*(\P^\infty;H^q(\homeo))=\F2[\omega_q],\quad\dim(\omega_q)=1,
\quad q=4,5$.
\item[{\em 6.}] $H^*(\P^\infty;H^i(\homeo))=0,\quad i\geq6.$
\end{itemize}
\end{corolario}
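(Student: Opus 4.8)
The plan is to compute the twisted cohomology groups $H^*(\P^\infty;H^q(\homeo))$ for each $q=0,\dots,5$ separately, using the explicit $\Sigma_2$-module structure on $H^q(\homeo)$ recorded in Lemma~\ref{cohoZ} and Proposition~\ref{twistedcoeffs}, together with the standard computation of the cohomology of $\P^\infty$ with coefficients in a $\mathbb{Z}[\Sigma_2]$-module. Recall that for the trivial module $\mathbb{Z}$ one has $H^*(\P^\infty;\mathbb{Z})=\mathbb{Z}[\omega]/2\omega$ with $\dim\omega=2$, and for the trivial module $\F2$ one has $H^*(\P^\infty;\F2)=\F2[\omega_1]$; the sign-representation $\widetilde{\mathbb{Z}}$ gives $H^0=0$, $H^{\mathrm{odd}}=\mathbb{Z}/2$, $H^{\mathrm{even}>0}=0$. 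These three building blocks, applied degree by degree, produce items 1, 5, and parts of items 3 and 4 immediately. So the first thing I would do is dispose of the easy degrees: $q=0$ gives item 1 (trivial module $\mathbb{Z}$); $q=1$ gives item 2 since $H^1(\homeo)=0$; $q\geq6$ gives item 6 since $H^i(\homeo)=0$ there; and $q=4,5$ give item 5 once one observes that $\tau^*$ is trivial on $H^4\approx\mathbb{Z}/2$ and $H^5\approx\mathbb{Z}/2$, as noted in the paragraph preceding Proposition~\ref{twistedcoeffs}, so these are trivial $\F2$-modules and contribute $\F2[\omega_q]$ with $\dim\omega_q=1$.

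Next I would treat $q=2$. By Lemma~\ref{cohoZ}, $H^2(\homeo)$ is free abelian of rank $2$ on $x_1,x_2$ but all elements have order $2$ (the relations $2x_1=2x_2=0$), so as an abelian group it is $\mathbb{Z}/2\oplus\mathbb{Z}/2$; by Proposition~\ref{twistedcoeffs}, $\tau^*(x_1)=x_1+x_2$, $\tau^*(x_2)=x_2$. Over $\F2$ this is the indecomposable $2$-dimensional $\F2[\Sigma_2]$-module (a single Jordan block for the order-$2$ generator), which is the regular representation $\F2[\Sigma_2]$. Since $\P^\infty=B\Sigma_2$ and group cohomology with coefficients in the free module $\F2[\Sigma_2]$ is concentrated in degree $0$ (it computes the cohomology of a point, i.e. the cohomology of $S^\infty$), we get $H^0=\F2=\mathbb{Z}/2$ and $H^{>0}=0$, which is exactly item 3.

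The genuinely delicate degree is $q=3$, item 4. Here $H^3(\homeo)\approx\mathbb{Z}\oplus\mathbb{Z}/2$, with the $\mathbb{Z}$ generated by $y_1$ and the $\mathbb{Z}/2$ by $z$, and by Proposition~\ref{twistedcoeffs} the involution acts by $\tau^*(y_1)=y_1+z$, $\tau^*(z)=z$. This is a nonsplit extension of $\Sigma_2$-modules $0\to\widetilde{\mathbb{Z}}?\to\cdots$ — actually one must be careful: the submodule $\langle z\rangle\cong\mathbb{Z}/2$ is fixed, and the quotient $\langle y_1\rangle\cong\mathbb{Z}$ is fixed, but the extension itself is a nontrivial $\Sigma_2$-module because $\tau^*$ does not fix $y_1$. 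The plan is to use the long exact sequence in $H^*(\P^\infty;-)$ associated to $0\to\mathbb{Z}/2\to H^3(\homeo)\to\mathbb{Z}\to0$ (with trivial action on the sub and the quotient), whose connecting homomorphism is cup product with the class in $H^1(\P^\infty;\mathrm{Hom}(\mathbb{Z},\mathbb{Z}/2))=H^1(\P^\infty;\mathbb{Z}/2)=\mathbb{Z}/2$ classifying the extension; one checks this class is the nonzero one precisely because the extension of $\Sigma_2$-modules is nontrivial (equivalently, because $\tau^*(y_1)\neq y_1$). Feeding $H^*(\P^\infty;\mathbb{Z}/2)=\F2[\omega_1]$ and $H^*(\P^\infty;\mathbb{Z})=\mathbb{Z}[\omega]/2\omega$ into the six-term (really infinite) exact sequence, degree by degree, with the Bockstein-type connecting map being multiplication by $\omega_1$, should pin down the answer: degree $0$ gives $\mathbb{Z}\oplus\mathbb{Z}/2$ (both $H^0$'s survive, connecting map out of $H^0(\mathbb{Z})$ lands in $H^1(\mathbb{Z}/2)=\mathbb{Z}/2$ and must be zero since $\mathbb{Z}$ is torsion-free and the map factors through reduction mod $2$... here I need to be slightly careful, but the point is the $\mathbb{Z}$ summand in $H^0$ of the total module is detected and survives); in positive even degrees $H^{2k}(\P^\infty;\mathbb{Z})=\mathbb{Z}/2$ and $H^{2k}(\P^\infty;\mathbb{Z}/2)=\mathbb{Z}/2$ and the alternating pattern of the connecting maps collapses things to a single $\mathbb{Z}/2$; in odd degrees $H^{\mathrm{odd}}(\P^\infty;\mathbb{Z})=0$ while the $\mathbb{Z}/2$-coefficient groups are killed by the connecting isomorphisms, yielding $0$. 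This reproduces item 4. The main obstacle, and the step I would spend the most care on, is getting the connecting homomorphisms in this long exact sequence exactly right — in particular confirming that the class of the module extension is the nonzero element of $H^1(\P^\infty;\mathbb{Z}/2)$ and tracking how multiplication by $\omega_1$ interacts with the $2$-torsion polynomial algebra $\mathbb{Z}[\omega]/2\omega$ — since a sign or parity error there would corrupt item 4 entirely. As a consistency check I would verify the total dimensions against the $\F2$-Poincaré series of $B(\P^3,2)$ from Table~\ref{T2} (via the mod $2$ spectral sequence, which must also converge correctly), and against the fact, established later in the section, that the spectral sequence has a nonzero differential detecting $\widehat{\zeta_3^*}(\omega^2)$.
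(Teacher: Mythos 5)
Your approach — computing $H^*(\P^\infty;H^q(\homeo))$ degree by degree as group cohomology of $\mathbb{Z}/2$ with the coefficient $\mathbb{Z}[\Sigma_2]$-modules read off from Lemma~\ref{cohoZ} and Proposition~\ref{twistedcoeffs} — is precisely what the paper intends, as its ``proof'' is simply a citation to standard calculations (Evens, page 6). Items 1, 2, 3, 5, 6 are correct and handled cleanly; in particular the identification of $H^2(\homeo)$ as the regular representation $\F2[\Sigma_2]$, with cohomology concentrated in degree $0$ by Shapiro's lemma, is exactly right.

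One point in item 4 needs correction, and you flagged the right spot to worry about. You suggest the connecting map $\partial\colon H^0(\P^\infty;\mathbb{Z})\to H^1(\P^\infty;\mathbb{Z}/2)$ ``must be zero since $\mathbb{Z}$ is torsion-free and the map factors through reduction mod~$2$.'' This is backwards: factoring through reduction mod~$2$ makes $\partial$ \emph{surjective}, not zero — $\partial$ is cup product with the nonzero extension class $\omega_1\in H^1(\P^\infty;\mathbb{Z}/2)$, so $\partial$ sends $1\in\mathbb{Z}$ to $\omega_1\neq0$. Indeed $\partial$ must be nonzero, for otherwise the long exact sequence would give $H^1(\P^\infty;H^3(\homeo))=\mathbb{Z}/2$, contradicting the stated answer (and the direct computation $\ker N/\operatorname{im}(\tau-1)=\langle z\rangle/\langle z\rangle=0$). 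With $\partial$ surjective, the degree-$0$ term becomes an extension $0\to\mathbb{Z}/2\to H^0\to 2\mathbb{Z}\to0$, which still gives $\mathbb{Z}\oplus\mathbb{Z}/2$ since $2\mathbb{Z}$ is free, so your final answer for $H^0$ survives the correction; but $H^1=0$ genuinely requires $\partial\neq0$. In the higher even degrees the connecting maps $\mathbb{Z}/2\to\mathbb{Z}/2$ are likewise all isomorphisms (since $\omega^k$ reduces mod~$2$ to $\omega_1^{2k}$ and cup with $\omega_1$ is injective in $\F2[\omega_1]$), which is what collapses the positive-degree groups to the stated pattern. An alternative to the long exact sequence — arguably more in the spirit of the cited reference — is to compute directly from the standard periodic free resolution of $\mathbb{Z}$ over $\mathbb{Z}[\Sigma_2]$, using $\tau-1$ and the norm $N=1+\tau$ on $H^3(\homeo)=\mathbb{Z}\langle y_1\rangle\oplus\mathbb{Z}/2\langle z\rangle$: one finds $\ker(\tau-1)=2\mathbb{Z}\langle y_1\rangle\oplus\mathbb{Z}/2\langle z\rangle$, $\operatorname{im}(\tau-1)=\mathbb{Z}/2\langle z\rangle$, $\ker N=\mathbb{Z}/2\langle z\rangle$, $\operatorname{im}N=\mathbb{Z}\langle 2y_1+z\rangle$, and the stated groups drop out with no extension or connecting-map subtleties to chase.
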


\begin{nota}\label{nmodule}{\em
An explicit description of the 
$H^*(\P^\infty)$-module structure in the spectral sequence~(\ref{CLSSS}) 
will be crucial for getting a good control of differentials 
(in the next paragraphs). 
To begin with, as indicated in Corollary~\ref{e2}(1),
there is the standard copy of the ring 
$H^*(\P^\infty)$ at the ($q=0$)-line of~(\ref{CLSSS}), 
whereas Corollary~\ref{e2}(3) forces the $H^*(\P^\infty)$-module 
$H^*(\P^\infty;H^2(\homeo))$ at the ($q=2$)-line to 
have one generator (of dimension $p=0$) 
with both $2$ and $\omega$ acting trivially.
The situation at the lines $q=4$ and $q=5$ is well known; in the
notation of Corollary~\ref{e2}(5),
the $H^*(\P^\infty)$-module $H^*(\P^\infty;H^q(\homeo))$ is generated
by $1_q$ and $\omega_q$ subject to the single relation $2\cdot1_q=0$
(here $1_q$ stands for the unit of the ring in Corollary~\ref{e2}(5)).
Finally, in order to interpret Corollary~\ref{e2}(4), note first that
the last relation in~(\ref{cuatro}) claims that $\mathbb{Z}/2$ is a 
$\mathbb{Z}[\mathbb{Z}/2]$-submodule of $H^3(\homeo)=\mathbb{Z}
\oplus\mathbb{Z}/2$. Then, a standard calculation shows that the 
induced map $$H^*(\P^\infty;\mathbb{Z}/2)\to H^*(\P^\infty;H^3(\homeo))$$
is injective in even dimensions (and, therefore, an isomorphism 
in positive even dimensions). Thus, the commutative diagram 

\begin{picture}(0,50)(-25,-5)
\put(8,32){$H^*(\P^\infty)\otimes H^*(\P^\infty;\mathbb{Z}/2)$}
\put(-28,0){$H^*(\P^\infty)\otimes H^*(\P^\infty;H^3(\homeo)$}
\put(45,27){\vector(0,-1){15}}
\put(148,0){$H^*(\P^\infty;H^3(\homeo)$}
\put(175,32){$H^*(\P^\infty;\mathbb{Z}/2)$}
\put(198,27){\vector(0,-1){15}}
\put(107.5,34){\vector(1,0){52}}
\put(125,2){\vector(1,0){16}}
\end{picture}

\noindent of $H^*(\P^\infty)$-actions implies that multiplication by 
$\omega\in H^2(\P^\infty)$ is monic on the 2-torsion part of 
$H^*(\P^\infty;H^3(\homeo)$.
}\end{nota}

Here is a chart of the $E_2$-term in~(\ref{CLSSS}):

\begin{picture}(0,118)(-18,-17)
\put(2,0){\vector(1,0){228}}
\put(2,0){\vector(0,1){90}}
\put(0,30){$\bullet$}
\put(0,45){\rule{2mm}{2mm}}
\multiput(30,45)(30,0){7}{$\bullet$}
\multiput(0,60)(15,0){16}{$\bullet$}
\multiput(0,75)(15,0){16}{$\bullet$}
\multiput(30,-2)(30,0){7}{$\bullet$}
\put(-10,15){\footnotesize$1$}
\put(-10,30){\footnotesize$2$}
\put(-10,45){\footnotesize$3$}
\put(-10,60){\footnotesize$4$}
\put(-10,75){\footnotesize$5$}
\put(-5,92){\footnotesize$q$}
\put(232,-6){\footnotesize$p$}
\put(245,45){$\cdots$}
\put(245,60){$\cdots$}
\put(245,75){$\cdots$}
\put(245,-2){$\cdots$}
\put(30,-12){$\omega$}
\put(60,-12){$\omega^2$}
\put(90,-12){$\omega^3$}
\put(120,-12){$\omega^4$}
\put(150,-12){$\omega^5$}
\put(180,-12){$\omega^6$}
\put(210,-12){$\omega^7$}
\put(-.7,-2.8){$\mathbb{Z}$}
\multiput(2,77)(10,-5){2}{\line(2,-1){8}}
\put(22,67){\vector(2,-1){8}}
\multiput(32,77)(10,-5){2}{\line(2,-1){8}}
\put(52,67){\vector(2,-1){8}}
\multiput(62,77)(10,-5){2}{\line(2,-1){8}}
\put(82,67){\vector(2,-1){8}}
\multiput(92,77)(10,-5){2}{\line(2,-1){8}}
\put(112,67){\vector(2,-1){8}}
\multiput(122,77)(10,-5){2}{\line(2,-1){8}}
\put(142,67){\vector(2,-1){8}}
\multiput(152,77)(10,-5){2}{\line(2,-1){8}}
\put(172,67){\vector(2,-1){8}}
\multiput(182,77)(10,-5){2}{\line(2,-1){8}}
\put(202,67){\vector(2,-1){8}}
\multiput(212,77)(10,-5){2}{\line(2,-1){8}}
\put(232,67){\vector(2,-1){8}}
\multiput(17,77)(15,-10){2}{\line(3,-2){13}}
\put(47,57){\vector(3,-2){13}}
\multiput(47,77)(15,-10){2}{\line(3,-2){13}}
\put(77,57){\vector(3,-2){13}}
\multiput(77,77)(15,-10){2}{\line(3,-2){13}}
\put(107,57){\vector(3,-2){13}}
\multiput(107,77)(15,-10){2}{\line(3,-2){13}}
\put(137,57){\vector(3,-2){13}}
\multiput(137,77)(15,-10){2}{\line(3,-2){13}}
\put(167,57){\vector(3,-2){13}}
\multiput(167,77)(15,-10){2}{\line(3,-2){13}}
\put(197,57){\vector(3,-2){13}}
\multiput(197,77)(15,-10){2}{\line(3,-2){13}}
\put(227,57){\vector(3,-2){13}}
\put(0,0){\begin{picture}(0,0)
\qbezier[60](18,62)(60,33)(89,3)
\put(90.6,1.6){\vector(4,-3){0}}
\end{picture}}
\put(30,0){\begin{picture}(0,0)
\qbezier[60](18,62)(60,33)(89,3)
\put(90.6,1.6){\vector(4,-3){0}}
\end{picture}}
\put(60,0){\begin{picture}(0,0)
\qbezier[60](18,62)(60,33)(89,3)
\put(90.6,1.6){\vector(4,-3){0}}
\end{picture}}
\put(90,0){\begin{picture}(0,0)
\qbezier[60](18,62)(60,33)(89,3)
\put(90.6,1.6){\vector(4,-3){0}}
\end{picture}}
\put(120,0){\begin{picture}(0,0)
\qbezier[60](18,62)(60,33)(89,3)
\put(90.6,1.6){\vector(4,-3){0}}
\end{picture}}
\put(150,0){\begin{picture}(0,0)
\qbezier[60](18,62)(60,33)(89,3)
\put(90.6,1.6){\vector(4,-3){0}}
\end{picture}}
\put(180,0){\begin{picture}(0,0)
\qbezier[40](18,62)(41,48)(57.5,32.5)
\put(60,30.5){\vector(4,-3){0}}
\end{picture}}
\end{picture}

\noindent A bullet represents a copy of $\mathbb{Z}/2$, and the solid square
in $E_2^{0,3}$ stands for $\mathbb{Z}\oplus\mathbb{Z}/2$. Note the group 
$\mathbb{Z}$ at node $(0,0)$. The dashed and dotted arrows are to be ignored 
for now---they will be shown to give the pattern of differentials 
in the spectral sequence.

\medskip
One immediately sees from the chart that the first group in the short exact 
sequence (analogous to~(\ref{portion}))
\begin{equation}\label{luno}
H^1(\bp)\to H^1(\bp;\mathbb{Z}/2)\to H^2(\bp)\stackrel{2}{\to} H^2(\bp)
\end{equation}
is trivial. Coupled with the fact---coming from Table~\ref{T2}---that 
the second group in~(\ref{luno}) is $\mathbb{Z}/2\oplus\mathbb{Z}/2$, this
implies that the element at node $(0,2)$ is a permanent cycle (this much
is obvious since there is no possible nontrivial target for it), and that
$H^2(\bp)=\mathbb{Z}/2\oplus\mathbb{Z}/2$, a trivial extension 
in this part of the $E_\infty$-term. 

\medskip
Next observe that the only potentially nontrivial 
differentials $d_r$ originating at node $(0,3)$
land at node $(4,0)$---$\,d_4$-differentials possibly hitting $\omega^2$. 
The kernel of such a differential is $H^3(B(\P^3,2))$,
whose only possibilities are:
\begin{itemize}
\item[(i)] $\mathbb{Z}\oplus\mathbb{Z}/2$, 
if  the torsion element at node $(0,3)$ 
is a $d_4$-cycle (independently of the $d_4$-image of the torsion-free 
element at node $(0,3)$);
\item[(ii)] $\mathbb{Z}$, if the torsion element 
at node $(0,3)$ hits, under $d_4$, 
the element at node $(4,0)$ (once again, 
independently of the $d_4$-image of the torsion-free 
element at node $(0,3)$).
\end{itemize}
Either one of these two possible cases gives at most one nontrivial
$\Sq^1$-cohomology class in $H^3(B(\P^3,2);\mathbb{Z}/2)$---the one coming
from the integral class. But the calculation at the end of 
Examples~\ref{ejemp2} shows that the $\Sq^1$-cohomology has rank 2 in dimension
$3$, so that $H^4(B(\P^3,2))$ must have an element of order $4$. Therefore
both elements in total degree 4 in the chart must survive to nontrivial classes
in $E_\infty$ making up a nontrivial extension in $H^4(B(\P^3,2))=\mathbb{Z}/4$.
A number of consequences follow at this point:
\begin{itemize}
\item[(a)] All elements at nodes $(0,3)$ and $(0,4)$ are permanent cycles.
\item[(b)] Possibility (i) above holds, that is, $H^3(B(\P^3,2))=\mathbb{Z}
\oplus\mathbb{Z}/2$.
\item[(c)] $\omega^2$ is the nontrivial element of order 2 in 
$H^4(\bp)=\mathbb{Z}/4$---proving Theorem~\ref{notrivialidad} and the 
second part in Theorem~\ref{lacohofinal}.
\end{itemize}

The only task remaining in the proof of Theorem~\ref{lacohofinal} 
is the determination of $H^5(B(\P^3,2))$. This can be obtained in terms of the 
non-orientability (proved in~\cite{D8}) of the $5$-manifold mentioned  
in~(\ref{finito}) to have the homotopy type of $B(\P^3,2)\,$---or, 
alternatively, by using~\cite[Theorem~43]{bausum}.
Instead, having come this far, we finish up the description of the spectral 
sequence~(\ref{CLSSS}).

\medskip
Together with the $H^*(\P^\infty)$-module structure described in
Remark~\ref{nmodule}, (a) above implies that 
all elements in the ($q=3$)-line, as well as all elements at nodes
$(2i,4)$ for $i\geq0$ are permanent cycles. But the element
at node $(2,4)$ cannot survive to a nontrivial class in $E_\infty$
(in view of~(\ref{finito})), and this forces a nontrivial $d_2$-differential
from node $(0,5)$ to node $(2,4)$. In fact, the $H^*(\P^\infty)$-module 
structure implies that all differentials 
$d_2\colon E_2^{2i,5}\to E_2^{2i+2,4}$ are isomorphisms for 
$i\geq0\,$---the family of $d_2$-differentials depicted in the chart above.
Furthermore, the only other potentially nontrivial $d_2$-differentials are
those of the form 
\begin{equation}\label{lasno}
d_2^{2i+1,5}\colon E_2^{2i+1,5}\to E_2^{2i+3,4} 
\end{equation}
for $i\geq0$. But if $d_2^{2i+1,5}$ 
was nontrivial, then there would not be any class left
to kill the permanent cycle at node $(2i+4,3)$---which must 
be killed in view of~(\ref{finito}). Therefore, all differentials 
in~(\ref{lasno}) are actually trivial and, instead, all differentials
$d_3^{2i+1,5}\colon E_2^{2i+1,5}\to E_2^{2i+4,3} $ ($i\geq0$) are
isomorphisms, giving the family of $d_3$-differentials depicted in the chart 
above (these, by the way, are compatible with the $H^*(\P^\infty)$-module
structure). This accounts for all the possible nontrivial differentials
up to $E_5$ (recall that every element in the ($q=3$)-line is a 
permanent cycle). Finally, the family of $d_5$-differentials 
depicted in the chart above is forced since classes $\omega^i$ with $i\geq3$ 
must be killed in view of~(\ref{finito}) (once again, the resulting family of
$d_5$-differentials is compatible with the $H^*(\P^\infty)$-module structure). 
In particular, in total degrees 5 there is only one remaining $\mathbb{Z}/2$, 
which makes up $H^5(B(\P^3,2))$.

\begin{nota}\label{estrult}{\em
Some of the properties in the multiplicative structure of the ring 
$H^*(B(\P^3,2))$ can be recovered from the analysis above.
To begin with, we know that $\omega\in H^2(B(\P^3,2))$
has $\omega^2\neq0$ but $\omega^3=0$. Next, it is asserted 
that there is an nontrivial exterior element
$a\in H^2(B(\P^3,2))$ (so that $a$ and $\omega$
form a $\mathbb{Z}/2$-basis of $H^2(B(\P^3,2))$).
Indeed, pick any $a\not\in\{0,\omega\}$; 
if $a^2\neq0$, then $a^2=\omega^2$ would be forced, so that
$(a+\omega)^2=0\,$. Unfortunately, once an exterior $a$ has been fixed,
the author does not know how to decide
whether $\omega a=0$ or $\omega a=\omega^2$
(both possibilities are compatible
with the multiplicative structure seen from the spectral sequence).
Further, there are the three indecomposable elements
\begin{itemize}
\item $b\in H^3(B(\P^3,2))$, generating a $\mathbb{Z}$;
\item $c\in H^3(B(\P^3,2))$, generating a $\mathbb{Z}/2$;
\item $d\in H^4(B(\P^3,2))$, generating a $\mathbb{Z}/4$.
\end{itemize}
The relation $\omega^2=2d$ has already been discussed, but now the fact
that $\omega c$ is the generator of $H^5(B(\P^3,2))=\mathbb{Z}/2$
follows from the multiplicative structure in the 
spectral sequence. As an exercise, the interested reader can check that
the explicit formulas in~\cite[page~25]{coho} imply that, in
the $H^*(\P^\infty)$-module structure discussed in Remark~\ref{nmodule},
multiplication by $\omega\in H^*(\P^\infty)$ sends the torsion-free 
generator in $H^0(\P^\infty;H^3(\homeo))$ into the generator 
in $H^2(\P^\infty;H^3(\homeo))$, so that $\omega b=\omega c$.
The author does not know how to deal with the 
two remaining products $a b$ and $a c$ (as in the case of $\omega a$, 
their triviality in the spectral sequence just indicates
that these products have filtration higher than expected).
}\end{nota}

\section{Symmetric motion planners}\label{smp}
M.~Farber began in~\cite{F1,farberinstabilities} a study of the 
continuity instabilities inherent in any motion planner for a robotical system.
In this section, 
his methods and results are (partially) adapted to the symmetric case.

\smallskip
Suppose a given robotical system has to be programmed to
perform tasks which, however, need to be decided during the 
course of the operation. Thus, the programming must be made in such a way that, 
after being fed with a given pair of states $(A,B)$ of the system, 
the robot  
should decide and perform, in an autonomous way, the required transformations 
for going from one of the given states to the other. There are three natural 
requirements that arise in many practical situations: 
\begin{enumerate}
\item \label{symetricmotion} 
Motion should be symmetric: the chosen
movement from $A$ to $B$ should be
the same one, but in reverse direction, as the movement
from $B$ to $A$. 
\item \label{irrelevantmotion}
There is no need to plan motion from a given state to itself:
the robot will only be fed with pairs of
different states ($A\neq B$). 
\item \label{robustness}
The programming should be robust enough to allow for small ``errors'' 
in the description of the states:
if the states $A$ and $B$ change into slightly perturbed new states 
$A'$ and $B'$, then the corresponding movements $A\leadsto B$ and $A'
\leadsto B'$ should be ``roughly'' the same. 
\end{enumerate}

Let $X$ be the space of all possible states the system can take. 
The topology of $X$ is determined by the capabilities and constraints
of the system (e.g.~physical design of the robot, or obstacles
in the robot's path).
Attention will be restricted to the case of a path-connected $X$,
which means that the system can always be transformed from any 
given state to any other state. Then, the required 
programming is encoded by a continuous (so that condition~\ref{robustness} 
above holds) $\mathbb{Z}/2$-equivariant (so that condition~\ref{symetricmotion} 
above holds) section $s\colon F(X,2)\to P_1(X)$ of the evaluation map 
$\ev_1\colon P_1(X)\to F(X,2)$, where the diagonal has been removed
from $X\times X$ to form $F(X,2)$ in view of condition~\ref{irrelevantmotion} 
above (see Section~\ref{sectctcsgoodwillie} for the relevant definitions about 
$\ev_1$). However, it is known that such a 
programming problem is solvable only in very special situations; indeed,
the required section $s$ can exist only when $X$ has a two-sided unital 
homotopy comultiplication\footnote{Such a property on
$X$ follows from~\cite{whitehead} and the inequalities $\cat\leq\TC\leq\TC^S$. 
In fact, the considerations after Example 7 in~\cite{FGsymm} mention 
the appealing possibility that such a section $s$ exists if and only if $X$ is 
contractible.} (e.g.~$X$ a suspension). Thus, the best one can hope 
for, in general, is to be able to give a small number of continuous
local moving instructions (with a low {\it order of instability}, see 
the considerations previous to Proposition~\ref{lema91} below),
that is, being able to partition the space $F(X,2)$ into
a small number of pieces $F_1,\ldots,F_k$ (with non-empty multiple 
intersection of closures $\overline{F}_{i_1}\cap\cdots\cap \overline{F}_{i_t}$ 
only for small $t$), each admitting a continuous $\mathbb{Z}/2$-equivariant 
local section $s_i\colon F_i\to P_1(X)$ for $\ev_1$.

\begin{definicion}\label{smpdef}
A symmetric motion planner for $X$, ${\cal M}=\{F_i,s_i\}$, 
consists of a collection $F_1,\ldots,F_k$ (called the local domains of 
${\cal M}$) of subsets of $F(X,2)$ and a collection of maps
$s_i\colon F_i\to P_1(X)$ (called the local rules of ${\cal M}$). 
This data is required to satisfy:
\begin{itemize}
\item[{\em(a)}] the local domains form a partition of $F(X,2)$ (the $F_i$'s 
cover $F(X,2)$ and are pairwise disjoint);
\item[{\em(b)}] each local domain should be a $\mathbb{Z}/2$-equivariant 
neighborhood deformation retract of $F(X,2)$;
\item[{\em(c)}] each local rule $s_i$ should be a continuous 
$\mathbb{Z}/2$-equivariant section of ${\ev_1}_{|F_i}$.
\end{itemize}
\end{definicion}

Let us spell out part (b) in Definition~\ref{smpdef}. 
First, each $F_i$ must be stable 
under the involution in $F(X,2)$. In addition, $F_i$ must have an 
open neighborhood $U_i$ in $F(X, 2)$ which is required to be stable under 
the involution and to admit a $\mathbb{Z}/2$-equivariant retraction 
$r_i\colon U_i\to F_i$. We also require that as a map to $F(X, 2)$, $r_i$ 
should be
$\mathbb{Z}/2$-equivariantly deformable to the inclusion $U_i
\hookrightarrow F(X,2)$. The last (rather technical, but natural in 
practical settings) condition captures the essence 
in~\cite{farberinstabilities}---where the non-symmetric 
situation is controlled by requiring the 
less restrictive hypothesis that each local domain be an ENR.
The present requirement is made not only in order to 
avoid pathological situations, but to have good control on the topology.

\smallskip
A symmetric motion planner gives a
practical algorithm to approach the 
programming problem posed at the beginning 
of the section: just determine the local domain $F_i$ containing the given 
pair of states $(A,B)$, and apply the local rule
$s_i(A,B)$ in the desired direction---note
that $(B,A)$ lies in $F_i$ too, and that
$s_i(B,A)$ runs the path $s_i(A,B)$ in reverse direction, 
i.e.~$s_i(A,B)(t)=s_i(B,A)(1-t)$. It is to be observed, however, that
such an algorithm will not necessarily 
satisfy the requirement \ref{robustness} above:
the closure of two (or more) local domains might have a non-empty 
intersection---a problem inherent to the motion planning task.
In~\cite{farberinstabilities}, this situation led Farber 
to the concept of order of instability of a motion planner,
and to its connection (via $\TC$) with the number of
local rules in (non-necessarily symmetric) motion planners.
The author hopes to deal elsewhere with a possible symmetric analogue 
of this phenomenon. As a preliminary step, 
it is next shown that $\TC^S(X)$ is a sharp lower bound for the 
number of local rules of symmetric motion planners for $X$.

\smallskip
Theorem~\ref{tcsvsslr} is a direct consequence of 
Propositions~\ref{lema91} and~\ref{elotro} below which, in turn,
are proved with arguments inspired by the proof of part~(1) of Theorem~6.1 
in~\cite{farberinstabilities}.

\begin{proposicion}\label{lema91}
The number of local domains in any symmetric motion planner of 
$X$ is bounded from below by $\TC^S(X)$.
\end{proposicion}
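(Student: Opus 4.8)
The plan is to convert a symmetric motion planner $\mathcal{M}=\{F_i,s_i\}_{i=1}^{k}$ for $X$ into an open covering of the base $B(X,2)$ by $k$ sets over each of which the fibration $\ev_2\colon P_2(X)\to B(X,2)$ admits a continuous section, thereby producing $\gen(\ev_2)\leq k-1$, i.e.\ $\TC^S(X)=\gen(\ev_2)+1\leq k$. First I would pass from the $\mathbb{Z}/2$-equivariant data upstairs to data on the orbit spaces: since each local domain $F_i\subset F(X,2)$ is stable under the coordinate-switching involution, it descends to a subset $\overline{F}_i\subset B(X,2)$, and since each $s_i\colon F_i\to P_1(X)$ is continuous and $\mathbb{Z}/2$-equivariant, it descends to a continuous section $\overline{s}_i\colon\overline{F}_i\to P_2(X)$ of $\ev_2$ over $\overline{F}_i$. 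The sets $\overline{F}_i$ cover $B(X,2)$ and are pairwise disjoint (this is just the image of part~(a) of Definition~\ref{smpdef} under the quotient map, which is surjective), but they need not be open, so there is work to do.

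The key step is to fatten each $\overline{F}_i$ to an open set $\overline{U}_i$ still carrying a section. Here I would invoke part~(b) of Definition~\ref{smpdef}: each $F_i$ has a $\mathbb{Z}/2$-stable open neighborhood $U_i$ in $F(X,2)$ with an equivariant retraction $r_i\colon U_i\to F_i$ that is equivariantly homotopic, as a map into $F(X,2)$, to the inclusion $U_i\hookrightarrow F(X,2)$. Passing to orbit spaces gives an open neighborhood $\overline{U}_i$ of $\overline{F}_i$ in $B(X,2)$ with a retraction $\overline{r}_i\colon\overline{U}_i\to\overline{F}_i$, together with a homotopy $h_i$ from the inclusion $\overline{U}_i\hookrightarrow B(X,2)$ to $j_i\circ\overline{r}_i$ (where $j_i\colon\overline{F}_i\hookrightarrow B(X,2)$). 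Now I would define a section of $\ev_2$ over $\overline{U}_i$ by the standard homotopy-lifting trick: start from the section $\overline{s}_i\circ\overline{r}_i$ over $\overline{U}_i$ (a section of $\ev_2$ \emph{pulled back along $j_i\circ\overline{r}_i$}, i.e.\ a section of $\ev_2$ composed with that map), and use the homotopy lifting property of the fibration $\ev_2$ to drag it along $h_i$ back to a genuine section of $\ev_2$ over $\overline{U}_i$ itself. Concretely one lifts the homotopy $h_i$ through $\ev_2$ starting at $\overline{s}_i\circ\overline{r}_i$; the end of the lift is the desired section over $\overline{U}_i$. This yields $k$ open sets $\overline{U}_1,\dots,\overline{U}_k$ covering $B(X,2)$, each with a section of $\ev_2$, so $\gen(\ev_2)\leq k-1$ and hence $\TC^S(X)\leq k$.

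The main obstacle is the homotopy-lifting step, and in particular checking that $\ev_2$ really is a (Hurewicz, or at least Serre) fibration so that one may lift $h_i$: $\ev_1\colon P_1(X)\to F(X,2)$ is the restriction of the Hurewicz fibration $\ev\colon P(X)\to X\times X$ to the $\mathbb{Z}/2$-invariant open subspace $F(X,2)$, hence is itself a fibration, and passing to the free $\mathbb{Z}/2$-orbit spaces preserves this (a free action makes the quotient map a covering, and fibrations are local along the base). One must also be slightly careful that the pulled-back section $\overline{s}_i\circ\overline{r}_i$ over $\overline{U}_i$ is a section over the \emph{map} $j_i\circ\overline{r}_i$ rather than over $\mathrm{id}_{\overline{U}_i}$, which is exactly why the homotopy $h_i$ (the homotopy between $j_i\circ\overline{r}_i$ and the inclusion) is needed; the technical condition in part~(b) requiring $r_i$ to be equivariantly deformable to the inclusion was put there precisely to supply this $h_i$. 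Everything else—surjectivity of the orbit quotient map onto $B(X,2)$, continuity and equivariance descending to the quotient, openness of $\overline{U}_i$—is routine point-set bookkeeping. I would organize the write-up so that the fibration statement and the descent-to-orbit-space statement are isolated as a short preliminary paragraph, and the homotopy-lifting argument is the single substantive step.
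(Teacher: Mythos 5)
Your proof is correct and is essentially the paper's argument: both use condition~(b) of Definition~\ref{smpdef} to extend the local rule from the (generally non-open) local domain to an open neighborhood via the equivariant deformation, then observe that the resulting open cover of $B(X,2)$ carries sections of $\ev_2$, giving $k\geq\TC^S(X)$. The only difference is the order of operations—the paper stays upstairs on $F(X,2)$, writes out the homotopy lift explicitly as a concatenation $\sigma(x,y)=\gamma_{x,y}\cdot s(\gamma_{x,y}(1),\delta_{x,y}(1))\cdot\bar{\delta}_{x,y}$, verifies its $\mathbb{Z}/2$-equivariance, and only then descends; you descend first and invoke the abstract homotopy lifting property of $\ev_2$ downstairs—which is why you need (and correctly sketch) the extra observation that $\ev_2$ is itself a fibration, a step the paper's explicit construction bypasses.
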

\begin{proof}
Let $F\subset F(X,2)$ be a local domain of a given symmetric motion planner
${\cal M}$ with $k$ local rules, and 
let $s\colon F\to P_1(X)$ be the local rule corresponding to $F$. 
Choose an open neighborhood $U$ of $F$ which is stable 
under the involution in $F(X,2)$, and admits
a retraction $r\colon U\to F$ which is $\mathbb{Z}/2$-equivariantly
deformable to the inclusion $U\hookrightarrow F(X,2)$.
Under these conditions the image $U'$ of $U$ under the projection 
$F(X,2)\to B(X,2)$ is open and, as shown below, there is 
a continuous $\mathbb{Z}/2$-equivariant section 
$\sigma\colon U\to P_1(X)$ for $\ev_1$. Granting this and passing to 
orbit spaces, $\sigma$ determines a continuous section $\sigma'$ for $\ev_2$ 
on $U'$. When this construction is performed over each of the 
local domains of ${\cal M}$, there results an open cover $U'_1,\ldots,
U'_k$ of $B(X,2)$, where each $U'_i$ admits a local section $\sigma'_i$ 
for $\ev_2$. Thus $k\geq\TC^S(X)$, as asserted.

\smallskip
In order to define the required section $\sigma$ at a pair 
$(x,y)\in U$, consider 
the path $H_{x,y}\colon[0,1]\to F(X,2)$ given by $H_{x,y}(t)=H(x,y,t)$,
where $H\colon U\times [0,1]\to F(X,2)$ is a fixed $\mathbb{Z}/2$-equivariant 
homotopy between the inclusion $U\hookrightarrow F(X,2)$ 
(at $t=0$) and the retraction $r\colon U\to F$ (at $t=1$). Under 
these conditions set $\sigma(x,y)$ to be the concatenation 
$$\sigma(x,y)=\gamma_{x,y}\cdot s(\gamma_{x,y}(1),\delta_{x,y}(1))\cdot
\bar{\delta}_{x,y}\,,$$
where $\bar{\delta}_{x,y}$ is the path $\delta_{x,y}$ in reverse, and 
$\gamma_{x,y},\delta_{x,y}\colon[0,1]\to F(X,2)$ are the components
of $H_{x,y}=(\gamma_{x,y},\delta_{x,y})$. The 
hypothesis on $H$ means that $\gamma_{x,y}=\delta_{y,x}$ and $\delta_{x,y}=
\gamma_{y,x}$, so that $\sigma$ is equivariant as required.
\end{proof}

\begin{proposicion}\label{elotro}
If $X$ is a smooth manifold, then there is a symmetric motion planner of 
$X$ with $\TC^S(X)$ local rules.
\end{proposicion}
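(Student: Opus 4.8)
The plan is to reverse the construction in the proof of Proposition~\ref{lema91}: build the local domains downstairs, on the orbit space $B(X,2)$, and then pull everything back along the canonical double cover $p\colon F(X,2)\to B(X,2)$, so that $\mathbb{Z}/2$-equivariance comes for free. Write $k=\TC^S(X)=\gen(\ev_2)+1$; then there are an open cover $V_1,\dots,V_k$ of $B(X,2)$ and continuous sections $\sigma_i\colon V_i\to P_2(X)$ of $\ev_2$. The first point to record is that $B(X,2)$ is itself a smooth manifold: $F(X,2)$ is an open submanifold of $X\times X$ and the coordinate-swap $\mathbb{Z}/2$-action on it is smooth and free, so the quotient is a smooth manifold; in particular (Cairns--Whitehead) it is triangulable, and after a sufficiently fine subdivision we may assume the triangulation is subordinate to $\{V_i\}$, i.e.\ every closed simplex lies in some $V_j$.

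Next I would turn the open cover into a partition of $B(X,2)$ by tame pieces, as in Farber's non-symmetric setting~\cite{farberinstabilities} but arranging the stronger conclusion demanded by Definition~\ref{smpdef}(b). For a simplex $\sigma$ put $c(\sigma)=\min\{\,j:\bar\sigma\subseteq V_j\,\}$, and let $G_i$ be the union of the open simplices $\sigma$ with $c(\sigma)=i$. Then $G_1,\dots,G_k$ partition $B(X,2)$, $G_i\subseteq V_i$, and since faces $\tau<\sigma$ satisfy $c(\tau)\le c(\sigma)$, both $\overline{G_i}$ and $L_i:=\overline{G_i}\setminus G_i$ are subcomplexes. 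This is the step I expect to take the most care, and where the manifold hypothesis is really used: one must check that each $G_i$ is not merely an ENR but a genuine neighbourhood deformation retract of $B(X,2)$ whose retraction is deformable to the inclusion. Regular neighbourhood theory in the triangulated manifold should do it: if $\rho\colon N\to\overline{G_i}$ is the retraction of a regular neighbourhood of the subcomplex $\overline{G_i}$, then $\rho^{-1}(L_i)$ is a neighbourhood of $L_i$ meeting $\overline{G_i}$ exactly in $L_i$, so $U_i:=N\setminus\rho^{-1}(L_i)$ is an open neighbourhood of $G_i$ in $B(X,2)$ on which $\rho$ restricts to a retraction $r_i\colon U_i\to G_i$, and the standard deformation of $\rho$ into the inclusion $N\hookrightarrow B(X,2)$, being rel $\overline{G_i}$ and hence rel $G_i$, restricts to a deformation of $r_i$ into $U_i\hookrightarrow B(X,2)$ through maps fixing $G_i$.

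Then I would pull everything back along $p$. Put $F_i=p^{-1}(G_i)$ and $\widetilde U_i=p^{-1}(U_i)$; the $F_i$ are $\mathbb{Z}/2$-invariant and partition $F(X,2)$. Lifting the homotopy $H_i\circ(p\times\mathrm{id})$ (where $H_i$ is the deformation above), by the covering homotopy property, starting from the inclusion $\widetilde U_i\hookrightarrow F(X,2)$, one gets at time $1$ a $\mathbb{Z}/2$-equivariant retraction $\widetilde U_i\to F_i$ equivariantly deformable to the inclusion—the retraction property and equivariance follow from uniqueness of path-lifts, using that $H_i$ is rel $G_i$ and $\mathbb{Z}/2$-invariant over $B(X,2)$—so Definition~\ref{smpdef}(b) holds. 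For the local rules, observe that the square
$$
\xymatrix{
P_1(X)\ar[r]\ar[d]_{\ev_1} & P_2(X)\ar[d]^{\ev_2}\\
F(X,2)\ar[r]_{p} & B(X,2)
}
$$
is a pullback: an element of $P_2(X)$ together with an ordering of its (unordered) pair of endpoints is exactly a path in $X$ between ordered distinct endpoints, so $P_1(X)\cong P_2(X)\times_{B(X,2)}F(X,2)$. Hence $\sigma_i\circ p|_{F_i}$ and the inclusion $F_i\hookrightarrow F(X,2)$ induce a continuous map $s_i\colon F_i\to P_1(X)$ with $\ev_1\circ s_i$ the inclusion, i.e.\ a section of ${\ev_1}|_{F_i}$; and since the deck involution of $P_1(X)$ corresponds on the product to fixing the $P_2(X)$-coordinate and switching the $F(X,2)$-coordinate, $s_i$ is $\mathbb{Z}/2$-equivariant.

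The collection $\{F_i,s_i\}_{i=1}^{k}$ is then a symmetric motion planner for $X$ with exactly $k=\TC^S(X)$ local rules, which together with Proposition~\ref{lema91} proves Theorem~\ref{tcsvsslr}. Conceptually the argument is just the reverse of the section-patching in Proposition~\ref{lema91}; the only genuinely technical point—the one I would expect to require the most bookkeeping—is the combinatorial/PL step of replacing an open cover of the manifold $B(X,2)$ by a partition into honest neighbourhood deformation retracts of the ambient space (rather than mere ENRs, which is all Farber needs in the non-symmetric case). Once that is in hand, the equivariant lifting along $p$ and the construction of the sections via the pullback square are formal.
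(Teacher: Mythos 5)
Your argument reaches the same conclusion as the paper's, and in fact the overall scheme is the same---pass to $B(X,2)$, cut the open cover $\{V_i\}$ down to a partition $\{G_i\}$ whose pieces are neighbourhood deformation retracts, then pull back along $p\colon F(X,2)\to B(X,2)$ and lift the retracting homotopies---but the technical heart is handled by a genuinely different tool. Where you triangulate the smooth manifold $B(X,2)$ (Cairns--Whitehead) with a subdivision subordinate to $\{V_i\}$, define $G_i$ combinatorially as the union of the open simplices whose closed carrier first enters $V_i$, and invoke PL regular-neighbourhood theory of the subcomplexes $\overline{G_i}\supset L_i$ to manufacture the open neighbourhoods and the rel-$\overline{G_i}$ deformation, the paper instead picks a smooth partition of unity $f_1,\dots,f_k$ subordinate to $\{V_i\}$, uses Sard's theorem to choose regular values $c_i$ with $\sum c_i=1$, defines $G_i=\{f_i\geq c_i,\ f_j<c_j\ \text{for}\ j<i\}$ as smooth submanifolds with boundary, and takes smooth tubular neighbourhoods $N_i$ in place of your regular neighbourhoods. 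These are interchangeable ways of achieving the same combinatorial decomposition; yours stays in the PL category and sidesteps Sard, while the paper's stays in the smooth category and sidesteps the triangulation-subordination and regular-neighbourhood lemmas (which in the noncompact case carry their own bookkeeping, as you anticipate). The lifting step is then essentially identical: you both appeal to the covering homotopy property of $p$ and unique path-lifting to get equivariance, and you correctly flag that the downstairs homotopy must be rel $G_i$ so that the lift at time $1$ is a genuine retraction onto $F_i$ (a point the paper leaves implicit in the phrase ``tubular neighbourhood''). The one further divergence is at the end: the paper produces the equivariant local rules $s_i$ from $\sigma_i|_{N_i}$ by quoting Lemma~8 of~\cite{FGsymm}, whereas you give a clean self-contained argument by observing that $P_1(X)\cong P_2(X)\times_{B(X,2)}F(X,2)$ and invoking the universal property of the pullback---a nice replacement that makes the equivariance of $s_i$ transparent. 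One small omission: the paper explicitly reduces to the case $k>1$ (again via~\cite[Lemma~8]{FGsymm}); you should note this too, since the $k=1$ case is degenerate for the patching construction.
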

\begin{proof}
Let $V_1,\ldots, V_k$ be an open cover of $B(X,2)$ with local sections 
$\sigma_i\colon V_i\to P_2(X)$ of $\ev_2$, where $k=\TC^S(X,2)$. The 
assumption $k>1$ can safely be made in view 
of~\cite[Lemma~8]{FGsymm}. Choose a smooth partition of unity 
\begin{equation}\label{partition}
f_i\colon B(X,2)\to[0,1], \quad i=1,\ldots,k,
\end{equation}
subordinate to the cover. Using Sard's Theorem it is possible to 
show\footnote{The author thanks 
Peter Landweber for pointing out the need of the condition $k>1$, 
and for explaining the details of this assertion.} the existence of
numbers $c_1,\ldots,c_k\in(0,1)$ satisfying
$\sum c_i=1$ and such that each $c_i$ is a regular value of $f_i$. 
For $i=1,\ldots,k$, consider the subspaces $G_i$ of $B(X,2)$ consisting of sets
$\{x,y\}\in B(X,2)$ satisfying $f_i(\{x,y\})\geq c_i$ and $f_j(\{x,y\})<c_j$ 
for $j<i$. One checks that each $G_i$ is a submanifold (with possibly 
non-empty boundary) of the corresponding $V_i$, and that the collection
$\{G_1,\ldots,G_k\}$ is a partition of $B(X,2)$. Pick a tubular (normal)
neighborhood $N_i$ of $G_i$ in $V_i$ (so that $N_i$ contains $G_i$ as a 
deformation retract). Let $\pi\colon F(X,2)\to B(X,2)$ be the canonical 
projection and, for $i=1,\ldots, k$, consider the inverse images 
$F_i=\pi^{-1}(G_i)$, a partition of $F(X,2)$, and the corresponding open 
neighborhoods $U_i=\pi^{-1}(N_i)$, all of which are stable under the 
involution in $F(X,2)$. As explained in~\cite[Lemma~8]{FGsymm}, each 
restricted section ${\sigma_i}_{|N_i}$ determines a continuous 
$\mathbb{Z}/2$-equivariant section $s_i\colon U_i\to P_1(X)$ of $\ev_1$. 
Therefore, to conclude the proof, it remains to find $\mathbb{Z}/2$-equivariant 
retractions $r_i\colon U_i\to F_i$ which are $\mathbb{Z}/2$-equivariantly 
deformable to the corresponding inclusions $U_i\hookrightarrow F(X,2)$. 
This is done in the next paragraph
by a standard homotopy lifting argument for the covering projection
$\pi\colon F(X,2)\to B(X,2)$.

For each $i=1,\ldots,k$, 
choose a homotopy $H_i\colon N_i\times [0,1]\to B(X,2)$ deforming the 
inclusion $H_i(-,0)\colon N_i\hookrightarrow B(X,2)$ 
to a retraction $H_i(-,1)\colon N_i\to G_i$. The homotopy lifting property of
$\pi$ applied to the commutative diagram

\begin{picture}(0,70)(-58,-3)
\put(0,10){$U_i\times [0,1]$}
\put(2,50){$U_i\times \{0\}$}
\put(66,10){$N_i\times [0,1]$}
\put(130,10){$B(X,2)$}
\put(130,50){$F(X,2)$}
\put(38,12){\vector(1,0){24}}
\put(105,12){\vector(1,0){22}}
\put(37,52){\vector(1,0){89}}
\put(143,44){\vector(0,-1){23}}
\put(14,44){\vector(0,-1){23}}
\put(37,53.5){\oval(3,3)[l]}
\put(12.5,44){\oval(3,3)[t]}
\put(38,4){\scriptsize $\pi\times [0,1]$}
\put(112,4){\scriptsize $H_i$}
\qbezier[40](25,22)(75,33)(124,45)
\put(128,46.3){\vector(4,1){0}}
\put(62,35){\scriptsize $\widehat{H}_i$}
\put(146,32){\scriptsize $\pi$}
\end{picture}

\noindent
yields the dotted homotopy $\widehat{H}_i\colon U_i\times [0,1]\to F(X,2)$
compatible with $H_i$ under $\pi$, and
deforming the inclusion $\widehat{H}_i(-,0)\colon U_i\hookrightarrow F(X,2)$
to the required retraction $\widehat{H}_i(-,1)\colon U_i\hookrightarrow F_i$.
To see that each branch of $\widehat{H}_i$ is $\mathbb{Z}/2$-equivariant,
fix a point $(a,b)\in U_i$ and observe that, if $\,\widehat{H}_i(a,b,t)=(x,y)$
for some $t\in[0,1]$, then $\widehat{H}_i(b,a,t)\in\{(x,y),(y,x)\}$ is forced.
But we need to see that $\widehat{H}_i(b,a,t)=(y,x)$ must in fact be 
the case. To this end, note that
$\widehat{H}_i(a,b,-)$ and $\widehat{H}_i(b,a,-)$ are two paths $[0,1]\to
F(X,2)$, starting respectively at $(a,b)$ and $(b,a)$, and projecting 
under $\pi$ to the same path. So, by the unique path lifting property of 
$\pi$, $\widehat{H}_i(a,b,-)$ and $\widehat{H}_i(b,a,-)$ are point-wise 
different.
\end{proof}

The paper closes by exploiting the main idea in~\cite{symmotion} in order to 
sketch a $5$-local-rules
symmetric motion planner for any autonomous robot whose state space 
is $\P^3=\mathrm{SO}(3)$. 

\smallskip
Start by consider the standard Euclidean 
charts $$U_i=\{\left[x_0,\ldots, x_4\right]
\in\P^4\,|\,\sum_j x_j^2=1\,\,\mathrm{and}\,\,
x_i\neq0\},\quad 0\leq i\leq4,$$ of $\P^4$, and the  
corresponding local sections $s_i\colon U_i\to S^4$ of the 
canonical projection $q\colon S^4\to\P^4$, where 
\begin{equation}\label{secciones}
s_i\left(\left[x_0,\ldots,x_4\right]\right)=\frac{|x_i|}{x_i}
\left(x_0,\ldots,x_4\right).
\end{equation}
According to~\cite{VG}, the formula
\begin{equation}\label{efe}
f\left(\left[z_0,z_1\right]
\right)=\frac{\left(z_0^2,z_1^2,\Re(z_0z_1)
\right)}{\sqrt{1-|z_0z_1|^2}-\Im(z_0z_1)}
\end{equation}
determines an explicit embedding $f\colon\P^3\subset\mathbb{R}^5$,
where $\Re(\omega)$ and $\Im(\omega)$ stand, respectively, for the
real and imaginary parts of a complex number $\omega$, and where
an element in $S^3$ has been represented by a pair of complex numbers $z_i$ 
($i=0,1$) with $|z_0|^2+|z_1|^2=1$. Using
Haefliger's formula~(\ref{haefligermap}),~(\ref{efe}) gives an explicit 
$\mathbb{Z}/2$-equivariant map $H\colon F(\P^3,2)\to S^4$
determining the pull-back diagram

\begin{picture}(0,60)(-101,8)
\put(0,17){$B(\P^3,2)$}
\put(0,50){$F(\P^3,2)$}
\put(63,50){$S^4$}
\put(63,17){$\P^4$}
\put(33,20){\vector(1,0){26}}
\put(33,53){\vector(1,0){26}}
\put(15,45){\vector(0,-1){18}}
\put(66,45){\vector(0,-1){18}}
\put(42,22.5){\scriptsize $\widetilde{H}$}
\put(42,55.5){\scriptsize $H$}
\put(8,35){\scriptsize $\pi$}
\put(69,35){\scriptsize $q$}
\end{picture}

\noindent where $\pi$ stands for the canonical projection. In these terms,
each section~(\ref{secciones}) pulls back to an explicit local section 
$\sigma_i\colon\widetilde{H}^{-1}(U_i)\to F(\P^3,2)$ for $\pi$. For instance,
$\widetilde{H}^{-1}(U_0)$ consists of those $\left\{\rule{0mm}{3.5mm}
[z_0,z_1],[\omega_0,
\omega_1]\right\}\in B(\P^3,2)$ such that
\begin{equation}\label{condicion}
\left(\sqrt{1-|\omega_0\omega_1|^2}-\Im(\omega_0\omega_1)\right)\Re(z_0^2)-
\left(\sqrt{1-|z_0z_1|^2}-\Im(z_0z_1)\right)\Re(\omega_0^2)\neq0
\end{equation}
whereas 
$$
\sigma_0\left(\left\{\rule{0mm}{3.5mm}[z_0,z_1],[\omega_0,\omega_1]\right\}
\right)=\begin{cases} \left(\rule{0mm}{3.5mm}[z_0,z_1],
[\omega_0,\omega_1]\right), & \mbox{left-hand-side 
of~(\ref{condicion}) is positive;}\\ \left(\rule{0mm}{3.5mm}
[\omega_0,\omega_1],[z_0,z_1]\right), & \mbox{left-hand-side 
of~(\ref{condicion}) is negative.}\end{cases}
$$

\noindent
But according to~\cite[Propositions~2.1 and~2.2]{symmotion} there is a 
well-defined commutative diagram

\begin{picture}(0,54)(-56,11)
\put(70,17){$B(\P^3,2)$}
\put(-2,50){$F(\P^3,2)$}
\put(141,50){$P_2(\P^3)$}
\put(52,50){$\frac{\left\{(x,y)\in S^3\times S^3\,|\,x\neq\pm y\right\}}{(x,y)
\sim(y,x)\sim(-x,-y)}$}
\put(32,52.5){\vector(1,0){15}}
\put(36.5,55){\scriptsize $\Psi$}
\put(122,52.5){\vector(1,0){15}}
\put(128,56){\scriptsize $g$}
\put(30,43.5){\vector(2,-1){36}}
\put(42,29){\scriptsize $\pi$}
\put(140,43.5){\vector(-2,-1){36}}
\put(122.5,29){\scriptsize $\ev_2$}
\end{picture}

\noindent where horizontal maps are explicitly given by
$$\Psi\left([x],[y]\right)=\left[\,\frac{x+y}{||x+y||}\,,\,\frac{x-y}{||x-y||}
\,\right]$$ and $$g([x,y])(t)=\left[\frac{ty+(1-t)x}{||ty+(1-t)x||}\right]$$
for $x,y\in S^3$.
(Note that a typo occurs in~\cite{symmotion} four lines above formula~(5),
as well as 7 lines below formula~(13):
the inequality sign in the definition of $\widetilde{\Delta}$
should be replaced by an equality sign.) Therefore, on the open cover 
\begin{equation}\label{cover}
\{\widetilde{H}^{-1}(U_0),\ldots,\widetilde{H}^{-1}(U_4)\},
\end{equation} 
the composites
$g\circ\Psi\circ\sigma_i$ for $i=0,\ldots,4$ give $5$ explicit local sections
for $\ev_2$, and the required symmetric motion planner is then described
by the proof of Proposition~\ref{elotro}. 

\smallskip
The above process has only
one non-constructive component, namely, no suitable
smooth partition of unity~(\ref{partition}) is explicitly given. But a closer
look at the proof of Proposition~\ref{elotro} shows that the critical goal
is to refine~(\ref{cover}) to a pair-wise disjoint cover by submanifolds 
of $B(\P^3,2)$. The author encourages readers interested in implementing 
this symmetric motion planner to attempt to provide the missing explicit 
construction. At any rate, the actual local rules have been 
explicitly described: after pulling back over $F(\P^3,2)\to B(\P^3,2)$, 
these are given by restrictions of the composites $g\circ\Psi\circ\sigma_i$.

\bigskip
Jes\'us Gonz\'alez\quad {\tt jesus@math.cinvestav.mx}

{\sl Departamento de Matem\'aticas, CINVESTAV--IPN

Apartado Postal 14-740 M\'exico City, C.P. 07000, M\'exico}
\end{document}